\def\a{\alpha}
\def\e{\varepsilon}
\newcommand{\ox}{{\overline x}}
\newcommand{\oy}{{\overline y}}
\newcommand{\oot}{{\overline t}}
\newcommand{\os}{{\overline s}}
\newcommand{\at}{{\widetilde \a}}
\newcommand{\R}{{\mathbb{R}}}
\newcommand{\E}{{\mathbb E}}
\newcommand{\Dt}{{\Delta t}}
\newcommand{\Dx}{{\Delta x}}
\def\at{\tilde{a}}
\def\B{\mathcal{B}}
\def\E{\mathcal{E}}
\def\G{\mathcal{G}}
\def\N{\mathcal{N}}
\def\V{\mathcal{V}}
\def\eps{\varepsilon}
\newcommand{\floor}[1]{\lfloor #1 \rfloor}
\def\1B{{\bf  1}}
\newcommand{\NN}{\mathbb{N}}
\def\I{\mathbb{I}}
\newcommand\be{\begin{equation}}
\newcommand\ee{\end{equation}}
\newcommand\ba{\begin{array}}
\newcommand\ea{\end{array}}
\newcommand{\bean}{\begin{eqnarray*}}
\newcommand{\eean}{\end{eqnarray*}}
\begin{document}

\title{A semi-Lagrangian scheme for Hamilton-Jacobi equations on networks with application to traffic flow models. 
}

\titlerunning{A semi-Lagrangian scheme for HJ equations on networks}        

\author{Elisabetta Carlini        \and
        Adriano Festa \and Nicolas Forcadel
}


\institute{E. Carlini, \at
              Sapienza University of Rome, 00185, Roma, Italy.\\
              \email{carlini@mat.uniroma1.it}          
           \and
           A. Festa, \at
              INSA Rouen, 76800 Saint-\'Etienne-du-Rouvray, France.\\
\email{adriano.festa@insa-rouen.fr}
\and
N. Forcadel, \at INSA Rouen, 76800 Saint-\'Etienne-du-Rouvray, France.\\
\email{nicolas.forcadel@insa-rouen.fr}
}

\date{Received: date / Accepted: date}

\maketitle

\begin{abstract}
We present a semi-Lagrangian  scheme for the approximation of a class of Hamilton-Jacobi-Bellman equations on networks. The scheme is explicit and stable under some technical conditions. We prove a convergence theorem  and  some error estimates. Additionally, the theoretical results are validated by numerical tests. Finally,  we apply the scheme to simulate traffic flows modeling problems. 
\keywords{Hamilton-Jacobi equations \and Networks \and Traffic flows \and Semi-Lagrangian scheme}
 \subclass{65M15 \and 65M25 \and 49L25 \and 90B20}
\end{abstract}

\section{Introduction}

The attention to the study of linear and nonlinear partial differential equations on networks raised consistently in the last decades motivated by the extensive use of systems like roads, pipelines, and electronic and information networks. 

In particular, extensive literature has been developed for vehicular traffic systems modeled through conservation laws. Existence results can be found in  \cite{garavello2006traffic}, and some partial uniqueness results (for a limited number of intersecting roads) in \cite{garavello2007conservation,andreianov2011theory}. Nonetheless, the lack of uniqueness on the junction point obliges to add some additional conditions that may be ambiguous or difficult to derive.  
More recently, another kinds of macroscopic models appears. These models rely on the Moskowitz function and make appear an Hamilton-Jacobi equation (see \cite{newell}).

The theory of Hamilton-Jacobi (HJ) equations on networks is very recent.  It is difficult to extend the classic framework to the network context because these equations do not have in general regular solutions and the notion of weak solution (\emph{viscosity solution}) must be adapted to preserve some properties on the junction points. An additional difficulty comes from possible discontinuities on data of the problem. Some theoretical results are contained in the early works \cite{achdou2013hamilton,camilli2018flame,imbert2013flux,imbert2013hamilton,schieborn2013viscosity} where, using some appropriate definitions of weak solutions, the authors prove the well-posedness of the problem. We also refer to the works \cite{barles2016flux,LionsSouganidis} for simplified proof of uniqueness. Concerning numerical schemes for this kind of equations, there is very few theoretical results. Let us mention the finite differences scheme proposed in \cite{camilli2018flame,costeseque2015convergent} and the paper \cite{imbert2015error} in which they prove some error estimates for this scheme.

In this paper, we adopt the notion of solutions as introduced in \cite{imbert2013flux}, which has some good advantages in term of generality, and we introduce a new numerical scheme for HJ equations on a network. For the sake of simplicity, we consider a simplified network (\emph{a junction}), but the result can be extended to a more general class of problems, including more complex structures.

We propose a semi-Lagrangian (SL) scheme by discretizing the \emph{dynamic programming principle} presented in \cite{imbert2013flux}. This scheme generalizes what introduced in \cite{camilli2013approximation}, and it enables discrete characteristics to cross the junctions. This property makes the scheme absolutely stable, allowing large time steps, and it is the main advantage compared to finite differences and finite elements schemes. We prove consistency and monotonicity which imply the convergence of the scheme. 

We also derive some consistency errors for the numerical solution that we can obtain in two different cases: for state independent Hamiltonians, where controls are constant along the arcs, and in a more general scenario. In the simplified case, we obtain a first order convergence estimate. In the second case, the key result is a consistency estimate that leads to an \emph{a priori} error estimate. The proof is obtained combining some techniques derived from the papers on regional optimal control problems    \cite{barles2013bellman,barles2016flux}.

\paragraph{{\bf Structure of the paper:}} in Section \ref{Sect:basics} we recall some basic notions for  junction networks, the definition of \emph{flux-limited viscosity solutions}  and the relation with optimal control problems on networks. In Section \ref{Sect:scheme}, we derive the SL scheme and prove its basic properties: consistency, monotonicity, and convergence. In Section \ref{sect:bounds},  we present the main result (Theorem \ref{teo:bounds2}) concerning the error estimate. In Section \ref{Sect:traffic},  we discuss the connection  between HJ equations and traffic flow model. Finally, in Section \ref{Sect:tests}, we show through numerical simulations the efficiency and the accuracy of our new method.

\section{Hamilton-Jacobi equations on networks} \label{Sect:basics}

A network is a domain composed of a finite number of nodes connected by a finite number of edges. To simplify the description of such system we focus on the case of a \emph{junction}, which is a network composed of one node and a finite number of edges. We follow \cite{imbert2013flux} and the notations therein to describe the problem.\\
Given a positive number $N$, a junction $J$ is a network of $N$ half lines $J_i:=\{k\,e_i, k\in\R^+\}$ (where each line is isometric to $[0,+\infty)$ and $e_i$ is a unitary vector centered in $0$) connected in a \emph{junction point} that we conventionally place at the origin. We then have  
$$ J:=\bigcup_{i=1,...,N}J_i, \quad J_i\cap J_j =\{0\}, \quad\forall i\neq j,\quad i,j\in\{1,...,N\}.$$
\begin{figure}[h!]
\begin{center}
\includegraphics[height=4.5cm]{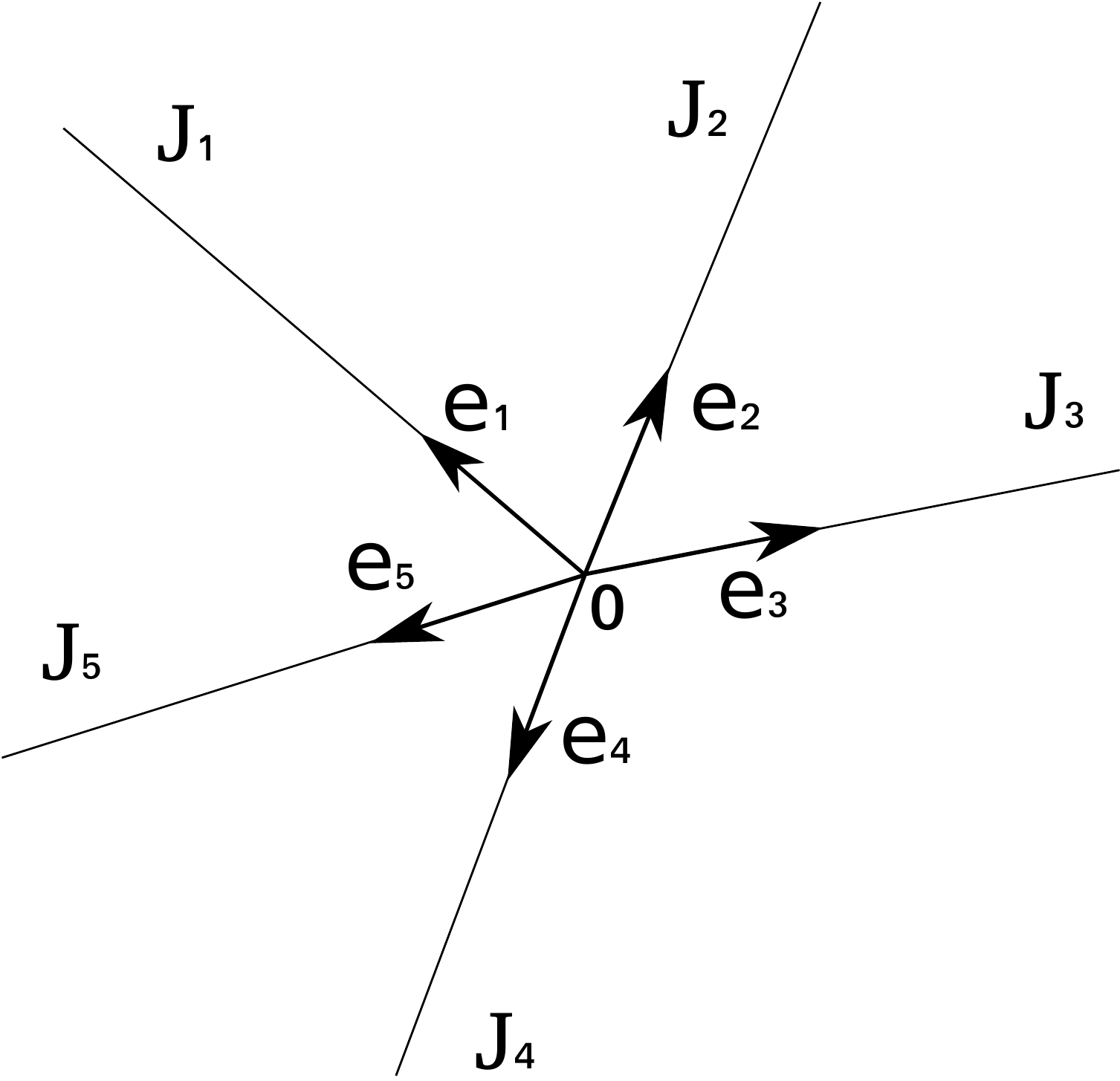} 
\caption{Junction with $N=5$ edges.} \label{figJun}
\end{center}
\end{figure}

We consider the geodesic distance function on $J$ given by
$$d(x,y)=\left\{\begin{array}{lll}
|x-y|, & & \hbox{if $x,y\in J_i$ for one $i\in\{1,...,N\}$},\\
|x|+|y|, & & \hbox{otherwise.}
\end{array} \right.$$
For a real-valued function $u$ defined on $J$, $\partial_i u(x)$ denotes the (spatial) derivative of $u$ at $x \in J_i$
and the gradient of $u$ is defined as:
\begin{equation}
  u_x:=\left\{
\begin{array}{lll}
 \partial_i u(x) & & \hbox{if }x\in J_i\setminus \{0\},\\
 \left(\partial_1 u(x),\partial_2 u(x),...,\partial_N u(x)\right) & &\hbox{if }x=0.
\end{array}
\right.
\end{equation}
We can now describe  our problem. Consider the following evolutive HJ equation on the network $J$
\begin{equation}\label{eq:hjnet}
\left\{
\begin{array}{ll}
 \partial_t u(t,x)+H_i(x,u_x(t,x))=0 & \hbox{ in } (0,T) \times J_i\setminus \{0\},\\
\partial_t u(t,x)+F_A(u_x(t,x))=0 & \hbox{ in }  (0,T) \times \{0\},
\end{array}
\right.
\end{equation}
with the initial condition 
\begin{equation}\label{eq:initial}
u(0,x)=u_0(x)\quad \hbox{ for }x\in J,
\end{equation}
where $u_0(x)$ is globally Lipschitz continuous on $J$. We suppose that standard assumptions on the Hamiltonian $H$ (cf. i.e. \cite{BardiCapuz@incollection}) 
hold:
\begin{itemize}
\item[(\bf{H1})](Regularity) for all $L>0$ there exists a modulus of continuity $\omega_L$ such that for all $|p|,|q|\leq L$ and $x\in J_i$ 
$$|H_i(x,p)-H_i(x,q)|\leq \omega_L(|p-q|);$$ in addition, $H(\cdot,p)$ is Lipschitz continuous w.r.t. the space variable.
\item[ (\bf{H2})](Uniform coercivity) $H_i(x,p)\rightarrow +\infty$ for $|p|\rightarrow +\infty$ uniformly for every $x\in J_i\cup \{+\infty\}$, $i=1,...,N$;
\item[ (\bf{H3})](Convexity) $\{H_i(x,\cdot) \leq \lambda \}$ is convex for every choice of $\lambda\in\R$ and a fixed $x\in J$.
\end{itemize}
Using the convexity hypothesis and the coercivity, there exists a $\hat p_i$ such that the Hamiltonian $H_i$ is non-increasing in $(-\infty,\hat p_i]$ and non-decreasing in  $[\hat p_i,\infty)$. We introduce the (respectively) non-increasing and non-decreasing functions 
\begin{equation*}
H^-_i(x,p):=\left\{
\begin{array}{ll}
 H_i(x,p)& \hbox{for }p\leq \hat p_i \\
 H_i(x,\hat p_i)& \hbox{for }p > \hat p_i 
\end{array}
\right.
\hbox{ and } 
H^+_i(x,p):=\left\{
\begin{array}{ll}
 H_i(x,p)& \hbox{for }p\geq \hat p_i \\
 H_i(x,\hat p_i)& \hbox{for }p < \hat p_i.
\end{array}
\right.
\end{equation*}
 Given a parameter $A\in \R \cup \{-\infty\}$ (flux limiter),  we define the operator $F_A:\R^N\rightarrow \R$ on the junction point as
\begin{equation}
F_A(p):= \max\left(A,\max_{i=1,...,N}H^-_i(0,p_i)\right).
\end{equation}
In order to introduce the notion of viscosity solution, we introduce the class of test functions. For $T>0$, set $J_T=(0,T)\times J$.
We define the class of test functions on $J_T$  and on $J$ as
$$C^k(J_T)= \{\varphi \in C(J_T),\,  \forall i=1,\dots,N,\, \varphi \in C^k((0,T)\times J_i)\},$$
$$C^k(J)=\{\varphi \in C(J), \, \forall i=1,\dots,N,\, \varphi \in C^k(J_i)\}.$$
We recall also the definition of upper and lower semi-continuous envelopes $u^*$ and $u_*$ of a (locally bounded) function $u$ defined on $[0, T ) \times J$,
$$u^*(t, x) = \limsup_{(s,y)\rightarrow(t,x)}  u(s, y) \quad  \hbox{ and }\quad  u_*(t, x) = \liminf _{(s,y)\rightarrow(t,x)} u(s, y).$$
We  say that a test function $\varphi$ touches a function $u$ from below (respectively from above) at $(t, x)$ if $u-\varphi$ reaches a minimum (respectively maximum) at $(t, x)$ in a neighborhood of it.

\begin{definition}[Flux-limited solutions] Assume that the Hamiltonians satisfy ({\bf H1})-({\bf H3}) and let $u :[0, T ) \times J \rightarrow \R$.
\begin{itemize}
\item[i)] We say that $u$ is a flux-limited sub-solution (resp. flux-limited super-so\-lu\-tion) of \eqref{eq:hjnet} in $(0, T ) \times J$ if for all test function $\varphi\in C^1(J_T)$ touching $u^*$  from above (resp. $u_*$ from below) at $(t_0, x_0)\in J_T$, we have
\begin{equation}
\begin{array}{lll}
\varphi_t(t_0 , x_0) + H_i (x_0,\varphi_x(t_0 , x_0)) \leq 0 & \quad \hbox{ (resp. $\geq 0$) } & \hbox{ if $x_0\in J_i$},\\
\varphi_t(t_0 , x_0) + F_A (\varphi_x(t_0 , x_0)) \leq 0 & \quad \hbox{ (resp. $\geq 0$)} \quad  & \hbox{ if $x_0=0$}.
\end{array}
\end{equation}
\item[ii)] We say that $u$ is a flux-limited sub-solution (resp. flux-limited super-so\-lu\-tion) of \eqref{eq:hjnet} on $[0, T ) \times J$ if additionally
\begin{equation}
u^*(0, x) \leq u_0 (x) \quad {\hbox{(resp. }} u_*(0, x) \geq u_0 (x)) {\hbox{ for all}} \; x \in J.
\end{equation}
\item[iii)] We say that $u$ is a flux-limited solution if $u$ is both a flux-limited sub-solution and a flux-limited super-solution.
\end{itemize}
\end{definition}

Thanks to the work of Imbert and Monneau \cite{imbert2013flux}, we have the following result which gives an equivalent definition of viscosity solutions for \eqref{eq:hjnet}. We use this equivalent definition in particular in the definition of the consistency in Section \ref{Sect:scheme}.

\begin{theorem}[Equivalent definition for sub/super-solutions] \label{th:1}
Let $\bar H^0 = \max_j \min_pH_j(p)$ and consider $A \in [\bar H^0, +\infty)$. Given solutions $p_{i}^{A} \in \mathbb{R} $  of
\be
      \label{definitionEquivalenteDefDesP}
      {H}_i\left( p_i^{A} \right)= {H}^+\left( p_i^{A} \right)=A \ee
    let us fix any time independent test function $\phi^0(x)$ satisfying, for $i=1,\dots, N$,
    \begin{eqnarray}
      \nonumber
      \partial_{i}\phi^0 (0)= p_{i}^{A}.
    \end{eqnarray}
    Given a function $u:(0,T)\times J\rightarrow \mathbb{R}$, the following properties hold true.

    i) If $u$ is an upper semi-continuous sub-solution of \eqref{eq:hjnet} with $A=H_0$, for $x\neq 0$, satisfying 
    \begin{eqnarray}
    \label{conditionDefEquivalentePBLimite}
    u(t,0)=\limsup_{(s,y)\rightarrow (t,0),\ y\in R^*_i}u(s,y),
\end{eqnarray}     
then $u$ is a $H_0$-flux limited sub-solution.

    ii) Given ${A}>H_0$ and $t_0\in(0,T)$, if $u$ is an upper semi-continuous sub-solution of \eqref{eq:hjnet} for $x\neq 0$, satisfying \eqref{conditionDefEquivalentePBLimite}, and if for any test function $\varphi$ touching $u$ from above at $(t_0,0)$ with 
    \begin{eqnarray}
      \label{definitionEquivalenteDefDeFonctionTestGlobale}
      \varphi(t,x)= \psi(t) + \phi^0(x),
    \end{eqnarray}
    for some $\psi \in C^2\left((0,+\infty)\right)$, we have
    \begin{eqnarray}
    \nonumber
      \varphi_t + F_{A} \left( \varphi_x \right) \leq 0 \quad \mbox{at }(t_0,0),
    \end{eqnarray}
    then $u$ is a ${A}$-flux limited sub-solution at $(t_0,0)$.

    iii) Given $t_0\in(0,T)$, if $u$ is a lower semi-continuous super-solution of \eqref{eq:hjnet} for $x\neq 0$ and if for any test function $\varphi$ satisfying (\ref{definitionEquivalenteDefDeFonctionTestGlobale}) touching $u$ from above at $(t_0,0)$ we have
    \begin{eqnarray}
    \nonumber
      \varphi_t + F_{A} \left( \varphi_x \right) \geq 0 \quad \mbox{at }(t_0,0),
    \end{eqnarray}
    then $u$ is a ${A}$-flux limited super-solution at $(t_0,0)$.
    \label{thDefinitionEquivalenteSurETSousSolutionsJunction}
\end{theorem}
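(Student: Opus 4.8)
The statement is the Imbert--Monneau reduction of the admissible test-function class, so the plan is to show that the flux-limited inequalities at the junction, which a priori must be checked against \emph{every} $C^1$ test function, are already forced once one knows (a) the interior sub/super-solution inequalities for $x\neq 0$, (b) the continuity property \eqref{conditionDefEquivalentePBLimite}, and (c) the single inequality tested against the distinguished separated test functions $\varphi=\psi(t)+\phi^0(x)$ of \eqref{definitionEquivalenteDefDeFonctionTestGlobale} whose branch slopes are the critical values $p_i^A$ fixed in \eqref{definitionEquivalenteDefDesP}. Throughout, the two elementary but decisive monotonicity facts are: since $H_i(0,\cdot)$ is non-increasing on $(-\infty,\hat p_i]$ and non-decreasing on $[\hat p_i,+\infty)$, we have $H^-_i(0,p)\le A$ exactly when $p\ge q_i$, where $q_i\le\hat p_i$ solves $H_i(0,q_i)=A$, while $p_i^A\ge\hat p_i$ solves $H_i(0,p_i^A)=A$; and raising a branch slope raises $\varphi$ near $0$ on that branch, hence preserves the property of touching $u^*$ from above (symmetrically, lowering slopes preserves touching $u_*$ from below).

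\textbf{Sub-solution direction (parts i--ii).} Let $\varphi$ touch $u^*$ from above at $(t_0,0)$; after a standard first-order reduction (adding a term vanishing to second order at the contact point, which changes neither $\varphi_t$ nor $\varphi_x$ there) I may assume $\varphi=\psi(t)+\phi(x)$ with $\partial_i\phi(0)=p_i$, and the goal is $\psi'(t_0)+F_A(p)\le 0$. I split on the value $F_A(p)=\max\bigl(A,\max_i H^-_i(0,p_i)\bigr)$. If $F_A(p)=H^-_j(0,p_j)>A$ for some $j$, then $p_j<q_j\le\hat p_j$, so $H^-_j(0,p_j)=H_j(0,p_j)$; here I push the contact point into the interior of $J_j$ and invoke the interior sub-solution inequality, passing to the limit to obtain $\psi'(t_0)+H_j(0,p_j)\le 0$, which is exactly the desired inequality. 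If instead $F_A(p)=A$, i.e. $p_i\ge q_i$ for all $i$, I modify $\phi$ branch by branch: on each branch with $p_i\le p_i^A$ I raise the slope to $p_i^A$ (legitimate by the monotonicity fact, and harmless since $\psi'(t_0)$ is untouched), producing a test function of the special form \eqref{definitionEquivalenteDefDeFonctionTestGlobale} to which hypothesis (c) applies and yields $\psi'(t_0)+F_A(p^A)=\psi'(t_0)+A\le 0$; on any branch with $p_i>p_i^A$ I instead push the contact into $J_i$ and use the interior inequality together with $H_i(0,p_i)\ge H_i(0,p_i^A)=A$ to reach the same $\psi'(t_0)+A\le 0$. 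In part (i), where $A=\bar H^0=\max_j\min_p H_j$, no external hypothesis (c) is needed: on a branch $j^*$ attaining the maximum one has $H_{j^*}(0,\cdot)\ge\bar H^0$ identically, so the interior inequality alone already gives $\psi'(t_0)+\bar H^0\le 0$, closing the case $F_A(p)=A$.

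\textbf{Super-solution direction (part iii).} This case is dual and in fact lighter. Testing $u_*$ from below, I again reduce to $\varphi=\psi(t)+\phi(x)$, but now I may freely \emph{lower} the branch slopes (preserving touching from below) to bring them down to the critical values $p_i^A$, reducing to the special form \eqref{definitionEquivalenteDefDeFonctionTestGlobale}; combined with coercivity and the structure of $F_A$ this forces $\psi'(t_0)+F_A(\varphi_x)\ge 0$. No continuity condition is required here, since lower semicontinuous super-solutions already control the junction value from below by the interior values.

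\textbf{Main obstacle.} The delicate step is the ``pushing of the contact point into a branch interior.'' Touching $u^*$ from above at $(t_0,0)$ does not by itself guarantee that $u^*-\varphi$ is nearly maximized along the interior of the particular branch $J_j$ I need, and this is precisely what the continuity hypothesis \eqref{conditionDefEquivalentePBLimite} is meant to supply, ensuring that $u^*(t_0,0)$ is approached from the interior. Converting this into genuine interior contact points where the viscosity inequality may legitimately be applied requires a careful penalization / doubling-of-variables argument near the junction, with the added difficulty that the branch furnished by \eqref{conditionDefEquivalentePBLimite} need not coincide with the branch $j$ singled out by the slope analysis. Reconciling the two --- by simultaneously steepening the competing branches to confine the maximizer to the desired branch and then passing to the limit --- is the technical heart of the argument and the step I expect to demand the most care.
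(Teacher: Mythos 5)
First, a point of reference: the paper does not prove Theorem \ref{th:1} at all --- it is imported verbatim from Imbert and Monneau \cite{imbert2013flux}, so there is no in-paper proof to compare against; your attempt has to be judged against the argument in that source. Your high-level architecture does match it: the case split on whether $F_A(p)=A$ or $F_A(p)=H_j^-(0,p_j)>A$, the branch-by-branch raising (resp.\ lowering) of slopes toward $p_i^A$ to reduce to the distinguished test functions \eqref{definitionEquivalenteDefDeFonctionTestGlobale}, and the use of the interior equation on branches whose slope cannot be so modified are all the right moves.

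The genuine gap is the step you yourself flag as the ``technical heart'' and then do not carry out: producing interior contact points on a chosen branch. This is not a routine penalization; it is the critical-slope lemma of \cite{imbert2013flux}, which requires (a) proving that $\bar p_i:=\sup\{p\ge 0:\ \varphi+p\,x$ still touches $u^*$ from above on $J_i$ near $(t_0,0)\}$ is \emph{finite} (this is where coercivity, the resulting local Lipschitz bound on subsolutions, and the weak continuity condition \eqref{conditionDefEquivalentePBLimite} actually enter --- not merely to ``supply approach from the interior''), and (b) showing that for $p$ slightly below $\bar p_i$ the maximum of $u^*-\varphi-p\,x$ on $J_i$ is attained at $x\neq 0$, so the interior viscosity inequality may be invoked and then passed to the limit. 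Without this lemma every branch of your case analysis is unsupported. Moreover, even granting the lemma, its conclusion is $\psi'(t_0)+H_j\bigl(0,p_j+\bar p_j\bigr)\le 0$ with $\bar p_j\ge 0$, \emph{not} $\psi'(t_0)+H_j(0,p_j)\le 0$ as you assert; to close the case $F_A(p)=H_j^-(0,p_j)>A$ one must additionally use that $H_j^-(0,p_j)=\inf_{q\ge p_j}H_j(0,q)\le H_j(0,p_j+\bar p_j)$, and similarly in the case $p_i>p_i^A$ one uses monotonicity of $H_i$ on $[\hat p_i,+\infty)$ applied to $p_i+\bar p_i$. Finally, the super-solution part is not ``lighter'': branches with $p_i<p_i^A$ cannot have their slope raised without destroying the touching-from-below property, so the dual critical-slope lemma (with a nonpositive critical slope and a dichotomy on which term realizes $F_A$) is again indispensable, and an appeal to ``coercivity and the structure of $F_A$'' does not substitute for it.
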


\subsection{Optimal control interpretation and dynamic programming principles}
We describe a natural application of equations \eqref{eq:hjnet} for a finite-horizon optimal control problem on the network $J$. We recall  several results contained in \cite{imbert2013flux} that are useful in the next sections. \\
Let us define the set of admissible dynamics on the network $J$ connecting the point $(s,y)$ to $(t,x)$ as
\begin{equation}
   \Gamma_{s,y}^{t,x}:=\left\{
   \begin{array}{ll}
    (X(\cdot),\alpha(\cdot))\in {\rm{Lip}}([s,t];J)\times L^\infty ([s,t];\R^{N+1})\\
    \dot X(\tau)=\alpha(\tau), \quad \tau\in[s,t]\\
    X(s)=y, \quad X(t)=x
\end{array}\right\}.
\end{equation}
We denote by $(\alpha_0,\alpha_1,...,\alpha_N)$ the $N+1$ components of the control function $\alpha:(0,T)\rightarrow\R^{N+1}$, where   $\alpha_i(t)$ is the control function defined on  the branch $J_i$   for $i=1,...,N$ and  $\alpha_0(t)$ is the control function defined  on the junction point.\\
We define a \emph{cost function}, 
$$L(x,\alpha):=
\left\{
   \begin{array}{ll}
    L_i(x,\alpha_i) & \hbox{if }x\in J_i,\\
    L_0(\alpha_0) & \hbox{if }x=0,
\end{array}\right. 
$$
 where   for $ i=1,\dots,N$  and we assume the following
 \begin{itemize}
 \item[{\bf (A1)}] $L_i:\R^+\times\R \rightarrow \R$ are \emph{strictly convex} (w.r.t. the second argument) and uniformly Lipschitz continuous functions,
 \item[{\bf (A2)}] $L_i$ are \emph{strongly coercive} w.r.t. the control argument uniformly in $x$
 ($L_i(x,\alpha_i)/|\alpha_i|\rightarrow +\infty$ for $|\alpha_i|\rightarrow+\infty$ uniformly in $x\in \R^+$).
 \end{itemize}
  In addition, $L_0:\R \rightarrow \R$  is defined as
$$ L_0(\alpha_0):=\left\{
   \begin{array}{ll}
    \bar L_0 &\quad  \hbox{if }\alpha_0=0,\\
    +\infty &\quad  \hbox{otherwise,}
\end{array}\right. 
$$
for a given  $\bar L_0\in \R$.
We define the {\em value function} of the optimal control problem  as
\begin{equation}\label{eq:value}
 u(t,x)=\inf_{y\in J} \inf_{(X(\cdot),\alpha(\cdot))\in \Gamma^{t,x}_{0,y}}\left\{u_0(X(0))+\int_0^t L(X(\tau),\alpha(\tau))d\tau\right\}.
\end{equation}
 
It has been proved in \cite{imbert2013flux} that the following \emph{dynamic programming principle} (DPP) holds. 
\begin{proposition} \label{hjdpp}
For all $x\in J$, $t\in (0,T]$, $s\in [0,t)$, the value function $u$ defined in \eqref{eq:value} satisfies
\begin{equation}\label{eq:dyn}
u(t,x)=\inf_{y\in J} \inf_{(X(\cdot),\alpha(\cdot))\in \Gamma^{t,x}_{s,y}}\left\{u(s,X(s))+\int_s^t L(X(\tau),\alpha(\tau))d\tau\right\}.
\end{equation}
\end{proposition}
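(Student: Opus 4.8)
The plan is to prove the two inequalities separately through the standard restriction-and-concatenation argument of dynamic programming, writing $w(t,x)$ for the right-hand side of \eqref{eq:dyn}. No new structure is needed beyond the defining infimum \eqref{eq:value} and the additivity of the cost functional along concatenated paths.

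First I would establish $u(t,x)\ge w(t,x)$. Fix any $z\in J$ and any admissible pair $(X,\alpha)\in\Gamma^{t,x}_{0,z}$. The restriction of $(X,\alpha)$ to $[s,t]$ is an admissible pair in $\Gamma^{t,x}_{s,X(s)}$, while its restriction to $[0,s]$ connects $z$ to $X(s)$. Splitting the running cost at time $s$,
\begin{equation*}
u_0(X(0))+\int_0^t L(X(\tau),\alpha(\tau))\,d\tau=\Big(u_0(X(0))+\int_0^s L\Big)+\int_s^t L,
\end{equation*}
I bound the first parenthesis from below by $u(s,X(s))$ via \eqref{eq:value}, since $X|_{[0,s]}$ is one competitor in that infimum. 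Hence the total cost is at least $u(s,X(s))+\int_s^t L\ge w(t,x)$, and taking the infimum over $z$ and over $(X,\alpha)$ yields $u(t,x)\ge w(t,x)$.

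For the reverse inequality $u(t,x)\le w(t,x)$, I would argue by $\varepsilon$-optimal concatenation. Fix $y\in J$, an admissible pair $(X,\alpha)\in\Gamma^{t,x}_{s,y}$, and $\varepsilon>0$. By the definition of $u(s,y)$ there exist $z\in J$ and $(Y,\beta)\in\Gamma^{s,y}_{0,z}$ with $u_0(Y(0))+\int_0^s L(Y,\beta)\le u(s,y)+\varepsilon$. Since $Y(s)=y=X(s)$, gluing $Y$ on $[0,s]$ to $X$ on $[s,t]$ produces a curve that is Lipschitz on $[0,t]$ together with a control in $L^\infty([0,t];\R^{N+1})$, hence an admissible pair in $\Gamma^{t,x}_{0,z}$. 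Evaluating \eqref{eq:value} along this concatenation gives $u(t,x)\le u(s,y)+\varepsilon+\int_s^t L(X,\alpha)$; passing to the infimum over $(X,\alpha)$ and over $y$, then letting $\varepsilon\to 0$, proves $u(t,x)\le w(t,x)$.

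The main point requiring care is the admissibility of the glued trajectory: I must check that concatenating two Lipschitz arcs coinciding at the matching time $s$ produces a Lipschitz curve on $J$ — the geodesic distance $d$ making this a genuine constraint when the two arcs meet at the junction — and that the pieced-together control remains essentially bounded, so that the running cost, including the junction cost $L_0$ that forces $\alpha_0=0$ away from the node, stays integrable. These verifications are routine given the definition of $\Gamma^{t,x}_{s,y}$, and no attainment of the infima is required, since the second inequality only invokes $\varepsilon$-optimal competitors.
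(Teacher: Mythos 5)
Your argument is correct and is the standard two-inequality proof of the dynamic programming principle (restriction of a competitor for the first inequality, $\varepsilon$-optimal concatenation for the second). Note, however, that the paper does not prove this proposition at all: it is quoted directly from Imbert--Monneau \cite{imbert2013flux}, where essentially the same concatenation argument is carried out, so there is nothing to contrast your proof with inside the paper itself. The one point you rightly flag --- admissibility of the glued pair, i.e.\ that joining two Lipschitz arcs agreeing at time $s$ yields a Lipschitz curve on $J$ with an $L^\infty$ control, and that the junction cost $L_0$ stays finite along the concatenation because $\alpha_0$ is only active where the trajectory sits at the node --- is indeed the only place where the network structure enters, and your treatment of it is adequate.
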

A direct approximation of the DPP \eqref{eq:dyn} is the basis for the  scheme which we describe  in the next section.\\
The following Theorem   characterizes  the value function  \eqref{eq:value} as the solution of a HJ equation (for the proof see \cite{imbert2013flux}).
\begin{theorem}\label{hjcont}
The value  function $u$ defined in \eqref{eq:value} is the unique viscosity solution of \eqref{eq:hjnet} with 
\begin{equation}\label{hamilt} H_i(x,p):= \sup_{\alpha_i\in \R} \left\{ \alpha_i \,p - L_i(x,\alpha_i)\right\}, \hbox{ and } A=- \bar L_0.
\end{equation}
\end{theorem}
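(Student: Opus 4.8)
The plan is to split the statement into three tasks: checking that the Hamiltonian defined by \eqref{hamilt} fits the abstract framework, proving that the value function $u$ of \eqref{eq:value} is a flux-limited solution of \eqref{eq:hjnet}, and concluding by uniqueness. I would first verify that $H_i(x,p)=\sup_{\alpha_i\in\R}\{\alpha_i p-L_i(x,\alpha_i)\}$, being the Legendre transform of $L_i$, inherits (H1)--(H3) from (A1)--(A2): strong coercivity of $L_i$ makes $H_i$ finite and coercive in $p$, the conjugate of a convex function is convex so (H3) holds, and the uniform Lipschitz dependence of $L_i$ on $x$ transfers to $H_i$, giving (H1). In parallel, the DPP of Proposition \ref{hjdpp} together with coercivity of $L$ and Lipschitz continuity of $u_0$ yields that $u$ is bounded and Lipschitz on $[0,T]\times J$; coercivity is precisely what keeps near-optimal velocities bounded and prevents trajectories from escaping to infinity, so that $u$ is continuous and the viscosity notion applies.

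The heart of the matter is to show that $u$ is simultaneously a flux-limited sub- and super-solution. On $(0,T)\times(J_i\setminus\{0\})$ this is the classical verification based on \eqref{eq:dyn}. For the sub-solution inequality I would take a test function $\varphi$ touching $u$ from above at $(t_0,x_0)$, insert an arbitrary constant control $a$ into the DPP on $[t_0-h,t_0]$, divide by $h$ and let $h\to0^+$ to obtain $\varphi_t+a\,\varphi_x-L_i(x_0,a)\le0$, then take the supremum over $a$ to recover $\varphi_t+H_i(x_0,\varphi_x)\le0$. For the super-solution inequality I would instead feed a near-optimal trajectory into the DPP, using coercivity to bound its velocity, and pass to the limit to get $\varphi_t+H_i(x_0,\varphi_x)\ge0$.

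At the junction $x_0=0$ the delicate point is the identification $A=-\bar L_0$, and here I would lean on the equivalent characterization of Theorem \ref{th:1}, which reduces the verification to separated test functions $\varphi(t,x)=\psi(t)+\phi^0(x)$. For the sub-solution side, the trajectory that dwells at the node on $[t_0-h,t_0]$ costs $\bar L_0 h$ and, through the DPP, gives $u(t_0,0)\le u(t_0-h,0)+\bar L_0 h$, which in the limit becomes $\varphi_t+A\le0$ with $A=-\bar L_0$; trajectories leaving along each branch $J_i$ supply the inequalities $\varphi_t+H_i^-(0,\partial_i\varphi)\le0$, and together these yield $\varphi_t+F_A(\varphi_x)\le0$. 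The super-solution inequality $\varphi_t+F_A(\varphi_x)\ge0$ follows from a near-optimal trajectory, whichever branch, or the node itself, it selects providing the term that attains the maximum defining $F_A$. I expect this junction analysis to be the main obstacle: one must control trajectories that touch or cross the origin, correctly balance the cost of dwelling at the node (feeding the $A$ term) against the cost along the interior branches (feeding the $\max_i H_i^-$ term), and check the semicontinuity normalization \eqref{conditionDefEquivalentePBLimite} that makes Theorem \ref{th:1} applicable. Once $u$ is known to be a flux-limited solution, uniqueness is immediate from the comparison principle for \eqref{eq:hjnet} proved in \cite{imbert2013flux} (see also \cite{barles2016flux,LionsSouganidis}), which forces any sub-solution to lie below any super-solution and hence pins down $u$ as the unique solution with datum $u_0$.
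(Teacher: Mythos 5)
The paper offers no proof of this theorem: it simply defers to \cite{imbert2013flux}, where the result is established. Your outline is the standard verification strategy carried out there --- check (H1)--(H3) for the Legendre transform (which this paper does separately in Proposition \ref{Hproperty}), use the dynamic programming principle of Proposition \ref{hjdpp} to show $u$ is a flux-limited sub- and super-solution, identify $A=-\bar L_0$ via the trajectory dwelling at the node, and conclude by the comparison principle --- so your approach is correct and essentially coincides with the cited proof. The only caveat is that yours is a plan rather than a complete argument: the super-solution inequality at the junction (controlling near-optimal trajectories that touch or cross the origin and extracting the term attaining the maximum in $F_A$) is the genuinely technical step in \cite{imbert2013flux}, and you acknowledge rather than execute it.
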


\begin{proposition}\label{Hproperty}
Under  assumption {(\bf{A1})}-{(\bf{A2})}  the following assertions hold true:
\begin{enumerate}
\item[$i)$] for every $x\in\R^+\cup \{+\infty\}$, $\alpha_i\in\arg\sup_{\alpha_i\in \R} \{\alpha_i p -L_i(x,\alpha_i)\}$ is bounded. 
\item [$ii)$] the non increasing part of $H_i(x,p)$ with respect of $p_i$ is given by
$$H^-_i(x,p_i)=\sup_{\alpha_i\leq 0}\{\alpha_i p_i -L_i(x,\alpha_i)\}.$$
\end{enumerate}
furthermore the Hamiltonian \eqref{hamilt} satisfies properties $({\bf{H1}})-({\bf{H3}})$. 
\end{proposition}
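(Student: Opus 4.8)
The plan is to read every assertion through the lens of convex duality: for each fixed $x$, the Hamiltonian $H_i(x,\cdot)$ defined in \eqref{hamilt} is precisely the Legendre--Fenchel conjugate of the cost $L_i(x,\cdot)$, and assumptions (A1)--(A2) translate directly into the standard dictionary of properties of this conjugate. I would first establish $i)$, since the bound on the maximizers is reused everywhere else. Fix $x$ and $p$ in a bounded set. By strong coercivity (A2) there is a function $g$ with $g(r)\to+\infty$ such that $L_i(x,\alpha_i)\ge |\alpha_i|\,g(|\alpha_i|)$ for $|\alpha_i|$ large, uniformly in $x$, so that
$$\alpha_i\,p-L_i(x,\alpha_i)\le |\alpha_i|\big(|p|-g(|\alpha_i|)\big)\longrightarrow -\infty\qquad\text{as }|\alpha_i|\to\infty.$$
Hence the supremum in \eqref{hamilt} is attained on a set bounded by a radius $R=R(|p|)$ depending only on $|p|$ and the coercivity modulus, not on $x$; strict convexity (A1) then forces this maximizer to be unique, so $\arg\sup$ is a bounded singleton.

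Next I would verify (H1)--(H3). Convexity (H3) is immediate, as $H_i(x,\cdot)$ is a supremum of affine functions, hence convex, with convex sublevel sets. For coercivity (H2), since $L_i$ is Lipschitz by (A1) it is finite-valued, and for any fixed $t>0$ the elementary bound $H_i(x,p)\ge t|p|-L_i(x,\pm t)$ together with the uniform control of $L_i$ in $x$ yields $H_i(x,p)\to+\infty$ as $|p|\to\infty$, uniformly for $x\in J_i\cup\{+\infty\}$. For regularity (H1), local Lipschitz continuity in $p$ follows from $i)$: on $|p|,|q|\le L$ the two suprema are controlled by the common maximizer bound $R=R(L)$, giving $|H_i(x,p)-H_i(x,q)|\le R\,|p-q|$; and Lipschitz continuity in $x$ follows from $|\sup_{\alpha_i}f-\sup_{\alpha_i}g|\le \sup_{\alpha_i}|f-g|$ applied to $f=\alpha_i p-L_i(x,\alpha_i)$ and $g=\alpha_i p-L_i(y,\alpha_i)$, which reduces the estimate to the Lipschitz-in-$x$ property of $L_i$ on the bounded range of relevant controls.

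For $ii)$ I would exploit the characterization of $\hat p_i$. Writing $\alpha_i^*(p)$ for the unique maximizer, the selection $p\mapsto\alpha_i^*(p)=\partial_p H_i(x,p)$ is non-decreasing because $H_i(x,\cdot)$ is convex, and the transition point $\hat p_i$ is exactly the value of $p$ at which $\alpha_i^*(p)=0$, equivalently $\hat p_i\in\partial_{\alpha_i}L_i(x,0)$, so that $H_i(x,\hat p_i)=-L_i(x,0)$. I then split into two cases. If $p\le\hat p_i$, monotonicity gives $\alpha_i^*(p)\le\alpha_i^*(\hat p_i)=0$, so the unconstrained maximizer already lies in $\{\alpha_i\le0\}$ and therefore $\sup_{\alpha_i\le0}\{\alpha_i p-L_i(x,\alpha_i)\}=H_i(x,p)=H^-_i(x,p)$. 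If $p>\hat p_i$, the concave map $\alpha_i\mapsto\alpha_i p-L_i(x,\alpha_i)$ is non-decreasing on $(-\infty,\alpha_i^*(p)]\supseteq(-\infty,0]$, so its supremum over $\{\alpha_i\le0\}$ is attained at $\alpha_i=0$ and equals $-L_i(x,0)=H_i(x,\hat p_i)=H^-_i(x,p)$; in both cases the claimed formula is recovered.

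I expect the main obstacle to be $ii)$, specifically the clean identification of the transition point $\hat p_i$ with the vanishing maximizer together with the identity $H_i(x,\hat p_i)=-L_i(x,0)$: when $L_i$ is not differentiable at $\alpha_i=0$ the correspondence $p\leftrightarrow\alpha_i^*$ develops a flat interval and this matching must be carried out through subdifferentials rather than a literal inverse of $\partial_{\alpha_i}L_i$. The coercivity and regularity bookkeeping of (H1)--(H2) is routine, but the verification that every constant is uniform in $x$ (including the endpoint $x=+\infty$) is exactly where the full strength of (A2) and of the uniform Lipschitz part of (A1) is genuinely used.
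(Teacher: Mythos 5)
Your proposal is correct, and for part $i)$ and the verification of ({\bf H1})--({\bf H3}) it follows essentially the same route as the paper: coercivity of $\alpha_i\mapsto \alpha_i p-L_i(x,\alpha_i)$ confines the maximizers to a compact interval, ({\bf H3}) is the supremum-of-affine-functions observation, ({\bf H2}) comes from testing with $\alpha_i=\pm t$ (the paper uses $t=1$), and ({\bf H1}) from bounding the increment by the maximizer times $|p-q|$. The one genuine difference is part $ii)$: the paper does not prove it at all but simply invokes Lemma 6.2 of Imbert--Monneau, whereas you give a self-contained argument via the monotone selection $p\mapsto \alpha_i^*(p)$, the identification of $\hat p_i$ with the vanishing of the maximizer, and the two-case split showing that the constrained supremum over $\{\alpha_i\le 0\}$ saturates at $\alpha_i=0$ (value $-L_i(x,0)=H_i(x,\hat p_i)$) once $p>\hat p_i$. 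This buys a proof that is independent of the cited reference and makes the flat-interval subtlety at non-differentiability points of $L_i$ explicit (harmless here, since $H_i(x,\cdot)$ is constant equal to $-L_i(x,0)$ on $\partial_{\alpha_i}L_i(x,0)$, so any choice of $\hat p_i$ in that interval yields the same $H_i^-$). You also explicitly verify the Lipschitz-in-$x$ half of ({\bf H1}) via $|\sup f-\sup g|\le\sup|f-g|$ on the bounded control range, a point the paper's proof passes over in silence.
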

\begin{proof}

From assumption  {(\bf{A1})}-{(\bf{A2})}, $\alpha_i p-L_i(x,\alpha_i)$ is a continuous function (negatively) coercive therefore there exists a compact interval $[-\mu,\mu]$, $\mu\in\R$, such that
$$  \sup_{\alpha_i\in \R} \left\{ \alpha_i \,p - L_i(x,\alpha_i)\right\}=\sup_{\alpha_i\in [-\mu,\mu]} \left\{ \alpha_i \,p - L_i(x,\alpha_i)\right\}.$$ Then, $i)$ holds. Assertion $ii)$ follows from Lemma 6.2 in \cite{imbert2013flux}.\\
From $i)$, we have
$$H_i(x,p)-H_i(x,q)\leq \bar \alpha |p-q|$$
where $\bar \alpha$ is the minimizer in $H_i(x,q)$. Exchanging the role of $p,q$, we get ({\bf{H1}}).\\
 Taking $\alpha=1$ in  \eqref{hamilt} we have
$$H_i(x,p)\geq p-L_i(x,1).$$
The same argument for $\alpha=-1$ gives  $H_i(x,p)\rightarrow +\infty$ for $|p|\rightarrow +\infty$, then ({\bf{H2}}) holds.
Finally  ({\bf{H3}}) holds since $H_i$ is  the superior envelope of convex functions. 
\qed
\end{proof}

\section{Numerical resolution: a semi-Lagrangian scheme} \label{Sect:scheme}

Let us introduce a uniform discretization of the network $(0,T)\times J$. The choice of a uniform discretization is not restrictive, and the scheme can be easily  extended to non-uniform grids. Given $\Delta t$ and $\Delta x$ in $\R^+$, we define $\Delta=(\Delta x, \Delta t)$, $N_T=\floor{ T/\Delta t}$ ($\floor{\cdot}$ is the truncation operator) and 
$$ \mathcal G^{\Delta} :=\{t_n: n=0,\dots, N_T  \}\times J^{\Delta x}$$
where
$$J^{\Delta x}:=\bigcup_{i=1,...,N} J_i^{\Delta x}, \quad J_i^{\Delta x}=\{k\Delta x\,e_i:k\in\NN\}.$$
We call $t_{n}=t_n$ for $n=0,\dots, N_T$ and we derive a discrete version of the dynamic programming principle \eqref{eq:hjnet} defined on the grid $ \mathcal G^{\Delta} $.
To do so, as usual in  first-order SL schemes, we discretize the trajectories in $\Gamma^{t_{n+1},x}_{t_{n},y}$ by one step of  Euler scheme.
For $i\in \{1,\dots,N\}$, let $x\in J_i$  and let $\alpha \in \R^{N+1}$ be such that $\alpha_i \Delta t\leq |x|$, then the approximated trajectory gets
$$x\simeq y+\alpha_i\Delta t .$$
 In this case, the discrete backward trajectory $x-\Delta t \alpha_i$ remains on $J_i$, and, by also applying a  rectangle formula, the discrete version of \eqref{eq:dyn} at the point $(t_{n+1},x)$ is
$$u(t_{n+1},x)\simeq u(t_n,x-\alpha_i\Delta t e_i)+\Delta t L_i(x,\alpha_i).$$ 
In opposite case $\alpha_i \Delta t> |x|$, the discrete trajectory has passed through the junction. Denoting  $ s_0\in [0,\Delta t-\frac{|x|}{\alpha_i}]$ the time spent by the trajectory   at the junction point,   $J_j$ the arc from which the trajectory comes and $\hat t:=\left(\Delta t- s_0-\frac{|x|}{\alpha_i}\right)$ the time spent by the trajectory  on the arc $J_j$, the approximation of \eqref{eq:dyn} at the point $(t_{n+1},x)$ becomes
\begin{equation*}
u(t_{n+1},x)\simeq u\left(t_n,-\alpha_j \hat t e_j\right) 
+\hat t\, L_j(0,\alpha_j)+ s_0 L_0(\alpha_0)+\frac{|x|}{\alpha_i}L_i(x,\alpha_i).
\end{equation*}
We call $B(J^{\Delta x})$ and $B(\G^{\Delta })$ the spaces of bounded functions defined respectively on $J^{\Delta x}$ and on  $\G^{\Delta }$.
To compute the value function on the foot of the discrete trajectories, which, in general, are not grid nodes, we approximate these values by a piecewise linear Lagrange interpolation $\mathbb I[\hat u](z)$, where $u\in B(J^{\Delta x})$ and  $z\in J$. The basic properties of the interpolation operator are summarized in the following lemma (for the proof see for instance \cite{QSS07}).

\begin{lemma}\label{inter}
Given the \emph{piecewise linear interpolation} operator $\I:B(J^{\Delta x})\times J\rightarrow \R$ and a function $\varphi \in C(J)$,  we denote by $\hat{\varphi}$ the  collection of  values $\{\varphi(x_k)\}_{x_k\in J^{\Delta x}}$. We have the following properties:
\begin{itemize}
\item (Monotonicity) If $\psi \in C(J)$ such that $\psi(x)\leq \varphi(x)$ for every $x\in J^{\Delta x}$ then
$$ \I[\hat \psi](x)\leq \I[\hat \varphi](x), \quad \hbox{for all }x\in J.$$
\item (Polynomial base) There exists a set $\{\phi_i\}_{i=I}$ ($I$ is the set of the indexes of the elements of $J^{\Delta x}$) of lagrangian bases \cite{QSS07}, such that $\phi_i(x_k)=\delta_{i,k}$ where $\delta$ is the Kronecker symbol; and 
$$ \I[\hat \varphi](x)= \sum_{i\in I} \varphi(x_i)\phi_i(x), \quad \hbox{for all }x\in J.$$
\item (Error estimate) If $\varphi \in W^{s,\infty}(J)$ with $s=1,2$, then there exists a constant $C>0$ such that
$$ |\I[\hat \varphi](x)-\varphi(x)|\leq C \Delta x^s,\quad \hbox{for all  } x \in J.$$
\item (Error estimate, smooth case) If $\varphi \in C^2(J)$ then
\begin{equation}\label{interperr}
|\I[\hat \varphi](x)-\varphi(x)|\leq   \frac{1}{2}\sup_{\xi\in [a,b]}|\varphi_{xx}(\xi)|\;\left|\prod_{i=1}^2 (x-x_i)\right|,
\end{equation}
 for all $x \in [x_1=a,x_2=b]\in J$.
 \end{itemize}
\end{lemma}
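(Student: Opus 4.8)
The plan is to reduce everything to the classical one-dimensional theory of piecewise linear interpolation, applied edge by edge, the only structural point being that the interpolation operator never mixes values coming from different branches. Indeed, for any $x\in J_i\setminus\{0\}$ the two grid nodes bracketing $x$ both lie on the same half line $J_i$ (one of them possibly the origin), so on each cell $[x_k,x_{k+1}]\subset J_i$ the interpolant is simply the unique affine function agreeing with $\hat\varphi$ at the two endpoints,
$$\I[\hat\varphi](x)=\varphi(x_k)\frac{x_{k+1}-x}{\Delta x}+\varphi(x_{k+1})\frac{x-x_k}{\Delta x}.$$
Since the origin carries a single value $\varphi(0)$ shared by all edges, these local definitions patch together consistently on $J$, and the uniformity of the mesh makes every constant below independent of the particular cell.

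From this local formula the basis representation follows at once: I would define $\phi_i$ to be the continuous, edge-wise affine \emph{hat} function equal to $1$ at $x_i$ and to $0$ at every other node, so that $\phi_i(x_k)=\delta_{i,k}$ by construction and $\I[\hat\varphi]=\sum_{i\in I}\varphi(x_i)\phi_i$ on $J$, which is exactly the stated polynomial-base property. Monotonicity is then immediate: on each cell the coefficients $\tfrac{x_{k+1}-x}{\Delta x}$ and $\tfrac{x-x_k}{\Delta x}$ are nonnegative and sum to one, equivalently the hat functions $\phi_i$ are nonnegative, so $\psi\le\varphi$ at all nodes gives $\I[\hat\psi]=\sum_i\psi(x_i)\phi_i\le\sum_i\varphi(x_i)\phi_i=\I[\hat\varphi]$.

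For the error estimates I would work on a single cell $[a,b]=[x_1,x_2]$ with $b-a=\Delta x$. In the smooth case $\varphi\in C^2$, the classical linear-interpolation remainder $\varphi(x)-\I[\hat\varphi](x)=\tfrac12\varphi_{xx}(\xi_x)(x-a)(x-b)$ for some $\xi_x\in(a,b)$ yields the pointwise bound \eqref{interperr} directly; bounding $|(x-a)(x-b)|\le(\Delta x)^2/4$ then gives the $s=2$ estimate with $C=\tfrac18\|\varphi_{xx}\|_\infty$. For merely $W^{2,\infty}$ data the same bound follows from the integral (Peano) form of the remainder, which only requires $\varphi_{xx}\in L^\infty$. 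The $s=1$ case is simpler still: if $\varphi$ is Lipschitz with constant $L$, then on each cell both $\varphi$ and its interpolant stay within $L\,\Delta x$ of the common endpoint value, so a triangle inequality gives $|\I[\hat\varphi](x)-\varphi(x)|\le 2L\,\Delta x$, hence the first-order rate $C\Delta x$.

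The only genuine thing to verify, and the step I would treat most carefully, is that the junction does not spoil this local reduction: one must check that the two bracketing nodes of every interior point lie on a single edge and that the value shared at the origin makes the edge-wise interpolants continuous across $0$. Once this is observed, no new constant or boundary effect appears at the junction point, and all four properties reduce to the standard one-dimensional estimates recalled in \cite{QSS07}.
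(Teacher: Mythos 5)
Your proof is correct and follows exactly the route the paper intends: the paper gives no proof of this lemma and simply defers to the classical one-dimensional theory in \cite{QSS07}, which is what you reproduce cell by cell (hat-function basis, convex-combination weights for monotonicity, and the standard linear-interpolation remainder in Lagrange and Peano form for the error estimates). Your added check that every interpolation cell lies entirely on a single edge, with the origin carrying one shared value, is the right observation to justify that the junction introduces no new difficulty, and it is consistent with how the paper uses the lemma (the estimate \eqref{interperr} is only ever invoked on cells that do not straddle the junction).
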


We finally define a fully discrete numerical operator $S:B(\G^{\Delta })\times J^{\Delta x}\to \R$  as, if $x\in J_i$
\begin{equation*}\label{eq:slscheme2}
S[\hat  v](x):=\min \left\{
   \begin{array}{ll}
    \inf\limits_{\alpha,\alpha_i<\frac{|x|}{\Delta t}}\mathbb I[\hat  v](x-\alpha_i\Delta t e_i)+\Delta t L_i(x,\alpha_i),\\
  \inf\limits_{\alpha,\alpha_i\geq \frac{|x|}{\Delta t}}\inf\limits_{ s_0\in[0,\Delta t-\frac{|x|}{\alpha_i}]} \min\limits_{j,\alpha_j\leq 0}\left\{\mathbb I[\hat  v]\left(-\left(\Delta t- s_0-\frac{|x|}{\alpha_i}\right)\alpha_j e_j\right)\right.\\
 \left.\phantom{tu} +\left(\Delta t- s_0-\frac{|x|}{\alpha_i}\right)L_j(0,\alpha_j)+ s_0 L_0(\alpha_0)+\frac{|x|}{\alpha_i}L_i(x,\alpha_i)\right\},
\end{array}\right. 
\end{equation*}
and, if $x=0$
\begin{multline*}
S[\hat  v](x):=
  \inf\limits_{\alpha,j,\alpha_j\leq 0}\inf\limits_{ s_0\in[0,\Delta t]} \left\{\mathbb I[\hat  v]\left(-\left(\Delta t- s_0\right)\alpha_j e_j\right)\right. \\
  \left.+\left(\Delta t- s_0\right)L_j(0,\alpha_j)+ s_0 L_0(\alpha_0)\right\}.
\end{multline*}
We define recursively the discrete solution $w\in B(\G^{\Delta })$ as
\begin{equation}\label{eq:slscheme}
w(t_{n+1},x)=S[\hat w^n](x),\quad n=0,\dots,N_T-1,\; x\in J^{\Delta x}
\end{equation}
where $w^n:=\{w(t_n,x)\}_{x\in J^{\Delta x}}$ for $n=0,\dots,N_T-1$ and $w^0=\{u_0(x)\}_{x\in J^{\Delta x}}$.

\medskip
Next, we prove some basic properties satisfied by the scheme \eqref{eq:slscheme}, assuming that assumptions ({\bf{A1}})-({\bf{A2}}) hold.
\begin{proposition}[Monotonicity]\label{monotonicity}
The numerical scheme \eqref{eq:slscheme} is monotone, i.e. given two discrete functions $v_1, v_2\in B(J^{\Delta x})$ such that $v_1\leq v_2$ we have 
$$ S[\hat v_1](x)\leq S[\hat v_2](x), \quad \forall x\in J^{\Delta x}. $$ 
\end{proposition}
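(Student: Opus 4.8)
The plan is to exploit the fact that the data $v$ enters the operator $S$ only through the interpolation operator $\mathbb{I}[\hat v]$ evaluated at the feet of the discrete characteristics, whereas the running-cost contributions involving $L_i$, $L_j$ and $L_0$ do not depend on $v$ at all. First I would fix $x\in J^{\Delta x}$ and treat the two defining cases ($x\in J_i\setminus\{0\}$ and $x=0$) in parallel, since in each the quantity minimized over the admissible controls has the schematic form $\mathbb{I}[\hat v](z)+c$, where the foot $z\in J$ and the cost $c$ depend only on the control parameters $(\alpha,s_0,j)$ and on $x$, but never on $v$.

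The key step is then to invoke the monotonicity of the piecewise linear interpolation operator from Lemma \ref{inter}: since $v_1\leq v_2$ at every grid node of $J^{\Delta x}$, we have $\mathbb{I}[\hat v_1](z)\leq \mathbb{I}[\hat v_2](z)$ for every $z\in J$, and in particular at every admissible foot $z$. Adding the common cost $c$ to both sides, for each fixed admissible control the quantity being minimized for $v_1$ is bounded above by the corresponding one for $v_2$.

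Finally I would pass to the infimum (and minimum) over all admissible controls, over the branch index $j$ with $\alpha_j\leq 0$, and over the intermediate time $s_0$, using the elementary fact that if $f_1(\omega)\leq f_2(\omega)$ for every $\omega$ in an index set then $\inf_\omega f_1(\omega)\leq \inf_\omega f_2(\omega)$ (and likewise for $\min$). This yields $S[\hat v_1](x)\leq S[\hat v_2](x)$ in both cases, which is the claim.

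There is no genuine obstacle here: the statement is a direct consequence of the monotonicity of $\mathbb{I}$ together with the $v$-independence of the cost terms and the order-preservation of the infimum. The only point requiring a little care is the bookkeeping, namely checking that in every branch of the piecewise definition of $S$ (including the nested $\inf$ over $s_0$ and $\min$ over $j$) the dependence on $v$ is confined to the interpolation term, so that the same comparison applies uniformly across all cases.
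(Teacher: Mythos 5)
Your proof is correct and follows essentially the same route as the paper's: both reduce the claim to the monotonicity of the interpolation operator (Lemma \ref{inter}) combined with the fact that the running-cost terms are independent of $v$. The paper phrases this by inserting the optimal control for $v_2$ as a suboptimal control for $v_1$ and distinguishes cases according to whether the trajectories cross the junction, whereas your uniform ``pointwise comparison then infimum'' argument sidesteps the case split and any appeal to attainment of the infimum, but the underlying mechanism is identical.
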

\begin{proof}
Let us fix a $x\in J^{\Delta x}_i$. 
We assume that the trajectory relative to $v_1$ passes through the junction and the one relative to $v_2$ does not. The other cases are easier and they can be treated in a similar way. Let us call $(\bar \alpha_i, \bar s_0, \bar j, \bar \alpha_{\bar j}, \bar \alpha_0)$  the optimal strategy relative to $v_1$,  and let us call $\hat \alpha_i$ the optimal control relative to $v_2$.  The optimal controls are bounded since Prop. \ref{Hproperty} holds. We have 
 \[\begin{split} S[\hat v_1](x)=\mathbb I[\hat v_1]\left(-\left(\Delta t-\bar s_0-\frac{|x|}{\bar\alpha_i}\right)\bar \alpha_{\bar j} e_{\bar j}\right)+\left(\Delta t-\bar s_0-\frac{|x|}{\bar \alpha_i}\right)L_j(0,\bar \alpha_{\bar j})\\
+\bar s_0 L_0(\bar \alpha_0)+\frac{|x|}{\alpha_i}L_i(x,\hat \alpha_i)
\leq \mathbb I[\hat v_1](x-\hat\alpha_i\Delta t e_i)+\Delta t L_i(x,\hat\alpha_i) 
= S[\hat v_2](x).
\end{split}\]
\qed
\end{proof}
\begin{proposition}\label{lipw}
   Let $w(t_n,x)$ be a solution of \eqref{eq:slscheme}. If $u_0$ is uniformly Lipschitz continuous then for $x,y\in J^{\Dx}$ there exists a $C>0$ such that 
   $$ \left|w(t_n,x)-w(t_n,y)\right|\leq C\, (\Delta t+ d(x,y)) \quad n=0,\dots,N_t$$
\end{proposition}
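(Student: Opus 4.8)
The plan is to argue by induction on the time index $n$, showing that the discrete Lipschitz bound is propagated by the scheme operator $S$ with a per-step increment that sums to something uniform in $\Delta$. For the base case $n=0$ we have $w^0=u_0$, so the claim reduces to the assumed Lipschitz continuity of $u_0$ together with the trivial $d(x,y)\le \Delta t+d(x,y)$. For the inductive step I would fix two grid points $x,y\in J^{\Delta x}$, assume the bound at level $n$ (in the form that $w^n$ is Lipschitz w.r.t. the geodesic distance $d$ with a constant $L_n$), and estimate $|S[\hat w^n](x)-S[\hat w^n](y)|$ directly from the definition \eqref{eq:slscheme}.

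The engine of the estimate is a \emph{transplantation} argument: pick a one-step strategy $(\alpha,s_0,j,\dots)$ realizing, up to $\varepsilon$, the infimum defining $S[\hat w^n](x)$, build from it an admissible one-step strategy for $y$, and bound the difference of the two values. Two ingredients make this quantitative. First, by Lemma \ref{inter} (monotonicity and the polynomial base) the piecewise-linear interpolant of a grid function that is $L$-Lipschitz w.r.t. $d$ is itself $L$-Lipschitz on all of $J$ — including across the junction, since there $d(z_1,z_2)=|z_1|+|z_2|$ and the interpolant shares the value at $0$ — so the difference of the interpolated foot values is controlled by $L_n\,d(z_x,z_y)$ with no extra interpolation error. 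Second, the optimal controls are bounded by some $\mu$ independent of the data (Proposition \ref{Hproperty} $i)$) and each $L_i$ is Lipschitz in space uniformly in the control (assumption $(\mathbf{A1})$); hence the running-cost difference is controlled as well.

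Then comes the case analysis. When $x,y$ lie on the same arc $J_i$ and the selected trajectory does not cross the junction, reusing the control $\alpha_i$ for $y$ gives feet at geodesic distance exactly $|x-y|=d(x,y)$ and a cost gap of order $\Delta t\,\mathrm{Lip}(L_i)\,d(x,y)$; this yields $L_{n+1}\le L_n+C\Delta t$, hence $L_n\le \mathrm{Lip}(u_0)+C\,t_n\le \mathrm{Lip}(u_0)+CT$, a constant uniform in $\Delta$ and $n$. The remaining cases — trajectories that cross the junction, and points $x,y$ on different arcs (where $d(x,y)=|x|+|y|$) — are handled by routing the competitor trajectory for $y$ through the junction so as to reuse the \emph{same} foot $z_x$ as $x$ (choosing maximal speed $\mu$ on the short arc segments so the detour fits inside the single step $\Delta t$). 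This makes the interpolation term vanish and leaves only a running-cost discrepancy of order $d(x,y)+\Delta t$; the additive $\Delta t$ is exactly the unavoidable ``rounding'' at the junction and is the origin of the $\Delta t$ appearing in the statement. The point $x=0$ is the easy special case of the second branch of $S$.

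I expect the main obstacle to be precisely these junction-crossing and cross-arc cases. The difficulty is to produce, for the second point, a one-step strategy that is simultaneously \emph{feasible} — the travel times on the two arc segments plus the waiting time $s_0$ must fit exactly into $\Delta t$ — and uses \emph{bounded} velocities, so that the cost gap stays $O(d(x,y)+\Delta t)$. One must also be careful that the interpolation Lipschitz constant never gets multiplied by a factor like $\Delta t/\Delta x$: since the scheme carries no CFL restriction, the $\Delta t$ slack must enter only through the running cost and through matching feet, never by interpolating a function whose modulus already contains an additive $\Delta t$. Keeping these two requirements compatible near the junction — where the available time budget is itself of order $\Delta t$ — is the delicate point; once it is settled the induction closes with a single constant $C$ depending only on $\mathrm{Lip}(u_0)$, $T$, $\mu$, and the Lipschitz constants of the $L_i$.
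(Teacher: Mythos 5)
Your overall strategy is sound, but it is structurally different from the paper's proof, and the difference matters. The paper does \emph{not} argue by induction on $n$: it compares $S[\hat w^{n-1}](x)$ and $S[\hat w^{n-1}](y)$ in a single step, and in \emph{every} case (not only the junction-crossing ones) it adjusts the competitor control so that the two discrete feet \emph{coincide exactly} — e.g.\ in the non-crossing case it picks $\alpha_i$ with $x-\Delta t\,\alpha_i e_i=y-\Delta t\,\bar\alpha_i e_i$, so $|\alpha_i-\bar\alpha_i|=|x-y|/\Delta t$ and the price is paid through the Lipschitz continuity of $L_i$ in the control variable, giving $\Delta t\,L_{L_i}|\alpha_i-\bar\alpha_i|\le L_{L_i}|x-y|$. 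Because the interpolated terms cancel identically, no regularity of $w^{n-1}$ is ever invoked, the constant depends only on the $L_i$, and there is nothing to propagate. Cross-arc points are reduced to the same-arc case by the triangle inequality through the junction, and the remaining work is the case analysis you anticipate ($y=0$, $\bar s_0=\Delta t$ or $<\Delta t$, etc.), where the $O(\Delta t)$ term indeed originates.

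Your version keeps the induction and, in the non-crossing case, reuses the \emph{same} control so the feet are translated by $x-y$ rather than matched; this forces you to invoke the Lipschitz modulus of $\I[\hat w^{n}]$. Here is the gap you flag but do not close: the inductive modulus of $w^{n}$ necessarily contains the additive $O(\Delta t)$ term (it is unavoidable in the crossing configurations, e.g.\ $x=0$, $|y|$ small, $\bar\alpha_i$ small, where the bound is $C\Delta t$ with $d(x,y)\ll\Delta t$), and the piecewise linear interpolant of a grid function with modulus $L\,d+C\Delta t$ has slope up to $L+C\Delta t/\Dx$ on a single cell — precisely the $\Delta t/\Dx$ amplification you wish to avoid, and there is no CFL condition to prevent it. The fix is either to observe that your two feet differ by an exact multiple of $\Dx$ (so the interpolant's modulus between them is $L|x-y|+C\Delta t$ with no amplification, and then check that the additive term does not accumulate over steps), or simply to match the feet in this case too, as the paper does — at which point the induction becomes superfluous. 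As written, your argument does not quite close; with that one repair it does, at the cost of a constant $\mathrm{Lip}(u_0)+CT$ instead of the paper's $C$ depending only on the Lagrangians.
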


\begin{proof}
 In this proof, we denote by $C$ a universal constant that depends only on $L_i$ and that may change line to line and with $L_f$ the Lipschitz constant of a generic function $f$.\\
Let just assume that $x,y\in J_i\cap J^{\Delta x} $. The latter is not restrictive since  if $x\in J_j\cap J^{\Delta x}, y\in J_i \cap J^{\Delta x}$ with $j\neq i$, we come back to the case of a comparison between point belonging to the same arc writing
     \begin{equation*}\left|w(t_{n},x)- w(t_{n},y)\right|\leq \left|w(t_{n},x)- w(t_{n},0)\right| + \left|w(t_{n},0)- w(t_{n},y)\right|.
     \end{equation*}
 We call $\bar \alpha_i$ the optimal control of $S[w^{n-1}](y)$ associated to the i-arc. We consider three different cases:
 \begin{enumerate}
  \item[$1)$] $\bar \alpha_i< |y|/\Dt$ with $y\ne 0$.\newline
   In this case, we consider $\alpha_i$ such that
  $$x-\Delta t \alpha_i e_i=y-\Delta t\bar \a_i e_i.$$
  This means in particular that
  \begin{equation}\label{eq:100}
  |\a_i-\bar \a_i|=\frac {|x-y|}{\Delta t}
  \end{equation}
  Using  the suboptimal control $\a_i$ for $S[\hat w^{n-1}](x)$ yields
  \begin{multline*}
   w(t_{n},x)-w(t_{n},y)   
   \leq  \I [\hat w^{n-1}]\left(-\left(x-\alpha_i \Delta t e_i\right)\right) + \Delta tL_i(x,\alpha_i)\\
   - \I [\hat w^{n-1}]\left(-\left(y-\bar \alpha_i \Delta t e_i\right)\right) -\Delta t L_i(y,\bar \alpha_i)
   \le \Delta t L_{L_i}|\a_i-\bar \a_i|\le L_{L_i}|x-y|
   \end{multline*}
  
    \item[$2)$] $0<\frac {|y|}{\Delta t}\le \bar \a_i$. This means in particular that the discrete trajectory starting from $y$ passes through the junction. We denote by $(\bar \a_i,\bar s_0,\bar \a_0,\bar j,\bar \a_{\bar j})$ the optimal control associated with  $S[\hat w^{n-1}](y)$.  We distinguish two sub cases:
    \begin{itemize}  
    \item[2.i)] $x=0$. 
   In this case, we choose the suboptimal control $(\bar s_0+\frac {|y|}{\bar \alpha_i},\bar \a_0,\bar j,\bar \a_{\bar j})$ (if $\bar \a_0\ne 0$, we replace it by $0$ in order to stay in the origin) and get    
   \begin{align}\label{eq:101}
     &w(t_{n},x)-w(t_{n},y)   \nonumber\\
   \le &\I [\hat w^{n-1}]\left(-\left(\Dt -\bar s_0-\frac {|y|}{\bar \alpha_i}\right)\bar \alpha_{\bar j} e_{\bar j}\right) +\left(\Dt -\bar s_0-\frac {|y|}{\bar \alpha_i}\right)L_{\bar j}(0,\bar \alpha_{\bar j})\nonumber\\
  & +\left(\bar s_0+\frac {|y|}{\bar \alpha_i}\right) L_0(\bar \alpha_0)
   -\I [\hat w^{n-1}]\left(-\left(\Delta t-\bar s_0-\frac {|y|}{\bar \a_i}\right)\bar \a_{\bar j} e_{\bar j}\right)\nonumber\\
   &-\left(\Delta t-\bar s_0-\frac {|y|}{\bar \alpha_i}\right)L_{\bar j}(0,\bar \a_{\bar j})\
-\bar s_0 L_0(\bar \alpha_0)-\frac{|y|}{\bar \a_i}L_i(y,\bar \a_i)\nonumber\\
\le & \frac {|y|}{\bar \alpha_i}\left(L_0(\bar \a_0)-L_i(y,\bar \a_i)\right).
   \end{align}
   If $\bar \a_i\ge 1$, using that $L_i$ is Lipschitz continuous, we get that there exists a constant $C$ (depending only on $L_i(y,0)$ and the Lipschitz constant of $L_i$) such that
   $$\frac{|L_i(y,\bar \a_i)|}{|\bar \a_i|}\le C.$$
   Injecting the estimate above in \eqref{eq:101} and using that $L_0(0)$ is bounded, we get
   $$w(t_{n},x)-w(t_{n},y)\le C|y|=C d(x,y).$$
   If $\bar \a_i\le 1$, since $L_0(0)$ and $L_i(\bar \a_i)$ are bounded, we get that there exists a constant $C$ such that
   $$w(t_{n},x)-w(t_{n},y)\le C\frac {|y|}{\bar \alpha_i}\le C \Delta t.$$
   We finally get that in all the cases,
   $$w(t_{n},x)-w(t_{n},y)\le  C \left(\Delta t+d(x,y)\right).$$
\item[2.ii)] $|x|>0$.  
   In this case, we choose $\a_i$ such that $\frac{|x|}{\a_i}=\frac{|y|}{\bar \a_i}$. This implies in particular that
   $$x-\frac {|x|}{\a_i} \a_i e_i=y-\frac {|x|}{\a_i} \bar \a_ie_i$$ 
   and so
   $$\frac {|x|}{\a_i} |\a_i-\bar \a_i|=|x-y|=d(x,y).$$
  Using the suboptimal control $(\a_i,\bar s_0,\bar \a_0,\bar j,\bar \a_{\bar j})$ for $S[w^{n-1}](x)$, we get
\begin{multline*}
     w(t_{n},x)-w(t_{n},y)   \nonumber\\
   \le \I [\hat w^{n-1}]\left(-\left(\Dt -\bar s_0-\frac {|x|}{\alpha_i}\right)\bar \alpha_{\bar j} e_{\bar j}\right) +\left(\Dt -\bar s_0-\frac {|x|}{\alpha_i}\right)L_{\bar j}(0,\bar \alpha_{\bar j})\\ 
   +\bar s_0 L_0(\bar \alpha_0)  +\frac {|x|}{\a_i}L_i(x,\a_i)  -\I [\hat w^{n-1}]\left(-\left(\Delta t-\bar s_0-\frac {|y|}{\bar \a_i}\right)\bar \a_{\bar j} e_{\bar j}\right)\nonumber\\
-\left(\Delta t-\bar s_0-\frac {|y|}{\bar \alpha_i}\right)L_{\bar j}(0,\bar \a_{\bar j})-\bar s_0 L_0(\bar \alpha_0)-\frac{|y|}{\bar \a_i}L_i(y,\bar \a_i)\nonumber\\
\le  L_{L_i}\frac {|x|}{\alpha_i}|\a_i-\bar \a_i|
\le  L_{L_i} d(x,y)
   \end{multline*}
   \end{itemize}
   
   \item[$3)$] $y=0$.  We denote by $(\bar s_0,\bar \a_0,\bar j,\bar \a_{\bar j})$ the optimal control associated to the operator $S[w^{n-1}](y)$.
   We distinguish two sub-cases again:
   \begin{itemize}
   \item[3.i)]: {$\bar s_0=\Delta t$}. 
   We choose $\a_i\ge \max(1,\frac {|x|}{\Delta t})$ and the suboptimal control $(\a_i,\bar s_0-\frac {|x|}{\a_i},\bar \a_0)$ for $S[w^{n-1}](x)$. We then get
   \begin{multline*}
     w(t_{n},x)-w(t_{n},y)   \nonumber\\
   \le \I [\hat w^{n-1}]\left(0\right) +\left(\bar s_0-\frac{|x|}{\a_i}\right) L_0(\bar \alpha_0)+\frac {|x|}{\a_i}L_i(x,\a_i)     -\I [\hat w^{n-1}]\left(0\right)\\-\bar s_0 L_0(\bar \alpha_0)
\le  \frac {|x|}{\a_i}\left(L_i(x,\a_i)-L_0(\bar \a_0)  \right)
\le  L_{L_i} d(x,y)
   \end{multline*}
    Using that $L_i$ is Lipschitz continuous, we get that there exists a constant $C$ (depending only on $L_i(0), L_0(0)$ and on the Lipschitz constant of $L_i$) such that
   $$\frac{|L_i(x,\bar \a_i)|+|L_0(\bar \a_0)|}{|\bar \a_i|}\le C.$$
   This implies that 
   $$w(t_{n+1},x)-w(t_{n+1},y)\le C|x|=Cd(x,y).$$
 
   \item[3.ii)]: {$\bar s_0<\Delta t$}. 
We choose $\a_i\ge \max(1,|\bar \a_{\bar j}|)$ such that
\begin{equation}\label{eq:102}
\frac {|x|}{\a_i}\le \frac{\Delta t-\bar s_0} 2\quad{\rm and}\quad \frac{\Delta t-\bar s_0}{\Delta t-\bar s_0-\frac{|x|}{\a_i}}|\bar \a_{\bar j}|\le \a_i.
\end{equation}   
We also set 
$$\a_{\bar j}=\frac{\Delta t-\bar s_0}{\Delta t-\bar s_0-\frac{|x|}{\a_i}}\bar \a_{\bar j},$$
which satisfies in particular $\a_i\ge |\a_{\bar j}|.$ Taking the suboptimal control $(\a_i,\bar s_0,\bar \a_0,\bar j,\a_{\bar j})$ for $S[w^{n-1}](x)$, we get
  \begin{multline}
     w(t_{n},x)-w(t_{n},y)  
   \le \I [\hat w^{n-1}]\left(-\left(\Dt -\bar s_0-\frac {|x|}{\alpha_i}\right) \alpha_{\bar j} e_{\bar j}\right)\\
    +\left(\Dt -\bar s_0-\frac {|x|}{\alpha_i}\right)L_{\bar j}( 0,\alpha_{\bar j})
   +\bar s_0 L_0(\bar \alpha_0) +\frac {|x|}{\a_i}L_i(x,\a_i) \\ 
   -\I [\hat w^{n-1}]\left(-\left(\Delta t-\bar s_0\right)\bar \a_{\bar j} e_{\bar j}\right)-\left(\Delta t-\bar s_0\right)L_{\bar j}(0,\bar \a_{\bar j})-\bar s_0 L_0(\bar \alpha_0)\\
\le  \frac {|x|}{\alpha_i}\left(L_i(x,\a_i)-L_{\bar j}(0,\a_{\bar j})  \right) + (\Delta t-\bar s_0)\left(L_{\bar j}(0,\a_{\bar j})-L_{\bar j}(0,\bar \a_{\bar j})\right)\\
\le\frac {|x|}{\alpha_i}\left(L_i(x,\a_i)-L_{\bar j}(0,\a_{\bar j})  \right) + (\Delta t-\bar s_0)L_{L_{\bar j}}|\a_{\bar j}- \bar \a_{\bar j}|.\label{eq:103}
   \end{multline}
 Using that $\alpha_i\ge 1$, we get that $\frac {|L_i(x,\a_i)}{\a_i}\le C$. In the same way (using that $\a_i\ge \a_{\bar j}$)
 $$\frac{|L_{\bar j}(0,\a_{\bar j})|}{\a_i}\le \frac 1{\a_i}\left(L_{\bar j}(0,0)+L_{L_{\bar j}} |\a_{\bar j}|\right)\le L_{\bar j}(0,0)+ L_{L_{\bar j}} \frac{|\a_{\bar j}|}{\a_i}\le C.$$
   Finally, using the definition of $\a_{\bar j}$, we get
   $$(\Delta t-\bar s_0) |\a_{\bar j}- \bar \a_{\bar j}|=|x|\frac {|\a_{\bar j}|}{\a_i}\le |x|.$$
   Injecting these estimates in \eqref{eq:103}, we arrive to
   $$w(t_{n},x)-w(t_{n},y) \le C|x|=Cd(x,y).$$
   \end{itemize}
  \end{enumerate}
\qed\end{proof}
%
\begin{proposition}[Stability]\label{stability}
Let $w(t_n,x)$ be a solution of \eqref{eq:slscheme}, then there is a positive constant $K$ such that for any $(t_n,x)\in \mathcal{G}^{\Delta }$
 $$|w(t_n,x)-u_0(x)|\leq  K t_n .$$

\end{proposition}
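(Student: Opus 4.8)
The plan is to reduce the whole estimate to a single one-step bound and then propagate it in time using the two structural properties of the scheme operator $S$: its monotonicity (Proposition~\ref{monotonicity}) and its commutation with the addition of constants. The latter follows at once from the linearity of the interpolation operator (Lemma~\ref{inter}): for every $c\in\R$ one has $S[\widehat{v+c}](x)=S[\hat v](x)+c$, both for $x\in J_i$ and for $x=0$, since adding $c$ to the nodal values shifts $\mathbb I[\hat v]$ by $c$ while leaving all running-cost terms untouched. Granting a first-step estimate of the form
$$ u_0(x)-C\Delta t\ \le\ S[\hat{u}_0](x)\ \le\ u_0(x)+C\Delta t,\qquad x\in J^{\Delta x}, $$
with $C$ independent of $x$ and $\Delta$, I would conclude by induction: from $w^1=S[\hat w^0]\le w^0+C\Delta t$, monotonicity together with commutation gives $w^2=S[\hat w^1]\le S[\widehat{w^0+C\Delta t}]=w^1+C\Delta t$, and iterating yields $w^{n+1}\le w^n+C\Delta t$; the reverse inequality is identical. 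Telescoping then gives $|w(t_n,x)-u_0(x)|\le nC\Delta t=Ct_n$, which is the claim with $K=C$.

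So the real content is the first-step bound, i.e.\ the consistency of one step of the scheme against the Lipschitz datum $u_0$. For the upper bound I would simply insert the ``stay still'' admissible control: for $x\in J_i\setminus\{0\}$ take $\alpha_i=0$, so that the foot of the characteristic is $x$ itself (a node, where interpolation is exact), giving $S[\hat u_0](x)\le u_0(x)+\Delta t\,L_i(x,0)$; for $x=0$ take $s_0=\Delta t$ and $\alpha_0=0$, giving $S[\hat u_0](0)\le u_0(0)+\Delta t\,\bar L_0$. Since the cost of standing still and $\bar L_0$ are bounded, this produces the upper half with an explicit $C$.

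For the lower bound I would first use the strong coercivity \textbf{(A2)} (equivalently Proposition~\ref{Hproperty}) to restrict the infimum defining $S[\hat u_0](x)$ to controls with $|\alpha|\le\mu$: larger speeds make the running cost positive and large, hence non-minimizing. For such bounded controls the foot $z$ of the discrete trajectory satisfies $d(z,x)\le\mu\Delta t$, the total running cost is bounded below by $-C\Delta t$ (the costs are continuous and coercive, hence bounded below, and $L_0\ge\bar L_0$), and, using that $u_0$ is globally Lipschitz together with the interpolation error estimate of Lemma~\ref{inter}, $\mathbb I[\hat u_0](z)\ge u_0(z)-L_{u_0}\Delta x\ge u_0(x)-L_{u_0}(\mu\Delta t+\Delta x)$. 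Combining these, $S[\hat u_0](x)\ge u_0(x)-C(\Delta t+\Delta x)$, and the residual $O(\Delta x)$ is absorbed into $O(\Delta t)$ under the technical mesh condition, completing the first-step estimate.

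The hard part will be this lower bound, and within it the bookkeeping for the junction-crossing controls: the localization argument must be repeated in each of the three geometric situations encoded in $S$ (the trajectory staying on $J_i$, reaching the origin and spending time $s_0$ there, or passing onto another branch $J_j$), checking in each case that coercivity of the relevant $L_i,L_j$ still confines $z$ to an $O(\Delta t)$-neighbourhood of $x$, so that the Lipschitz continuity of $u_0$ applies uniformly. The monotonicity-plus-constants propagation is precisely what keeps the constant $C$ from deteriorating with $n$, so that the resulting bound is genuinely linear in $t_n$.
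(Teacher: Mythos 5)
Your proposal is correct and follows essentially the same route as the paper: the paper also reduces everything to the one-step quantity $\sup_x|S[\hat u^0](x)-u_0(x)|/\Delta t$ and propagates it by showing $u_0(x)\pm Kt_n$ are discrete super/sub-solutions via monotonicity and commutation with constants, which is your telescoping induction in different words. The only difference is that you additionally sketch why the one-step bound is finite and uniform (stay-still control for the upper bound, coercivity plus Lipschitz continuity of $u_0$ for the lower bound), a point the paper's proof leaves implicit in its choice of $K$; your residual $O(\Delta x)$ per step can in fact be removed by noting that the piecewise linear interpolant of the Lipschitz bound $u_0(\cdot)\ge u_0(x)-L_{u_0}d(x,\cdot)$ is exact on each cell, so no mesh condition is needed.
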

\begin{proof}
Let $K\in \R$ be such that  
$$K\geq \max \left\{ \sup_{x\in J^{\Delta x}}\frac{[S[\hat u^0](x)-u_0(x)]^+}{\Dt},\sup_{x\in J^{\Delta x}}\frac{[u_0(x)-S[\hat u^0](x)]^+}{\Dt}\right\},$$
where $u^0:=\{u_0(x)\}_{x\in J^{\Delta}}$.
The discrete function $\overline{u}(x,t_n):=u_0(x)+K t_n$ is a super discrete solution, i.e. $\overline{u}(x,t_{n+1})\geq S[\hat {\overline{u}}(\cdot,t_n)](x)$ for all $(x,t_n)\in \mathcal{G}^{\Delta }$.
In fact, since for all $(x,t_n)\in \mathcal{G}^{\Delta }$
 $$K\Dt+K t_n\geq \sup_{x\in J^{\Delta x}} [S[\hat u^0](x)-u_0(x)]^++K t_n\geq S[\hat u^0](x)-u_0(x)+K t_n,$$
  we have
 $$
 \overline{u}(x,t_{n+1})=u_0(x)+K t_{n+1}\geq u_0(x)+S[\hat u^0](x)-u^0(x)+K t_n= S[\hat {\overline{u}}(\cdot,t_n)](x).
 $$
The discrete function $\underline{u}(x,t_n):=u_0(x)-K t_n$ is a sub discrete solution, i.e. 
$$\underline{u}(x,t_{n+1})\leq S[\hat {\underline{u}}(\cdot,t_n)](x), \hbox{for all }(x,t_n)\in \mathcal{G}^{\Delta }.$$
In fact, since for all $(x,t_n)\in \mathcal{G}^{\Delta }$
$$K\Dt+K t_n\geq \sup_{x\in J^{\Delta x}} [u_0(x)-S[\hat u^0](x)]^++K t_n\geq u_0(x)-S[\hat u^0](x)+K t_n,$$ we have
 $$
 \underline{u}(x,t_{n+1})=u_0(x)-K t_{n+1}\geq u_0(x)-(u_0(x)-S[\hat u^0](x)+K t_n)= S[\hat {\underline{u}}(\cdot,t_n)](x).
 $$ 
By monotonicity, we have that $\underline{u}(x,t_n)\leq w(x,t_n)\leq \overline{u}(x,t_n)$ for any $(x,t_n)\in \mathcal{G}^{\Delta }$ and this implies the conclusion.

\qed\end{proof}

\begin{remark}[Bounded control] By Prop. \ref{stability},  $w$ solution of \eqref{eq:slscheme} is bounded and then the discrete problem \eqref{eq:slscheme} is well posed. We observe also that the same argument of Proposition \ref{Hproperty} (based on \textbf{(A2)}) can be used to prove that the control $\alpha$ in \eqref{eq:slscheme} is bounded. We call \begin{equation}\label{CFL}
\mu=\sup\limits_{(x,t)\in J\times(0,T]}\max\limits_{i=1,...,n}|\alpha_i^*|,
\end{equation}
the maximal absolute value of the optimal control.
\end{remark}

\begin{proposition}\label{consisrates}
Given $\Delta t>0$ and $\Delta x>0$, let us assume
\begin{equation}\label{CFLg}
{\mu}\frac{ \Delta t}{\Delta x}\le 1
\end{equation}
(with $\mu$ as in \eqref{CFL}), then
for any   $\varphi \in C^2(J)$ 
the following consistency error estimates hold for the scheme \eqref{eq:slscheme}:
\be\label{cons31}
i)\hbox{ if } x\in J_i^*, \quad \left|\frac{\varphi(x) - S[\hat \varphi](x)}{\Dt}-H_i(\varphi_x(x))\right|\leq K\|\varphi_{xx}\|_\infty \left(\frac{\Dx^2}{\Dt}+\Dt\right),\ee
\be\label{cons3}
ii)\hbox{ if } x=0,\quad\left|\frac{\varphi(x) - S[\hat \varphi](x)}{\Dt}- F_{A}(\varphi_{x}(x))\right|\leq K\|\varphi_{xx}\|_\infty\left( \frac{\Dx^2}{\Dt}+\Dt\right),\ee 
where $K$ is a positive constant.
\end{proposition}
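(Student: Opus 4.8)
The plan is to treat the two cases separately; in each I reduce the scheme operator to a single supremum carrying the structure of a Legendre transform (so that the discrete dynamic programming step matches either $H_i$ or $F_A$), and then push the Taylor and interpolation errors through that supremum by means of the elementary bound $|\sup_a f(a)-\sup_a g(a)|\le\sup_a|f(a)-g(a)|$, which yields both inequalities at once.

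\emph{Part i), interior point.} The first step is to use the CFL condition \eqref{CFLg} to discard the junction-crossing branch. Since $x\in J_i^*$ forces $|x|\ge\Dx$, and since by coercivity (cf.\ Proposition \ref{Hproperty}(i) and the remark on bounded controls) the minimizing control satisfies $|\alpha_i|\le\mu$, condition \eqref{CFLg} gives $|\alpha_i|\Dt\le\mu\Dt\le\Dx\le|x|$; the two branches of $S$ agree at the threshold $\alpha_i=|x|/\Dt$, and strong coercivity \textbf{(A2)} makes the cost of the crossing branch (which requires $\alpha_i\ge|x|/\Dt\ge\mu$) dominate, so the minimizer lies in the non-crossing regime and
$$S[\hat\varphi](x)=\inf_{|\alpha_i|\le\mu}\left\{\mathbb{I}[\hat\varphi](x-\alpha_i\Dt e_i)+\Dt\,L_i(x,\alpha_i)\right\}.$$
Writing $\varphi(x)-S[\hat\varphi](x)=\sup_{|\alpha_i|\le\mu}\{\varphi(x)-\mathbb{I}[\hat\varphi](x-\alpha_i\Dt e_i)-\Dt L_i(x,\alpha_i)\}$, I Taylor-expand $\varphi(x-\alpha_i\Dt e_i)=\varphi(x)-\alpha_i\Dt\,\varphi_x(x)+O(\|\varphi_{xx}\|_\infty\alpha_i^2\Dt^2)$ and invoke the smooth interpolation bound of Lemma \ref{inter} to replace $\mathbb{I}[\hat\varphi]$ by $\varphi$ at the foot of the characteristic up to $O(\|\varphi_{xx}\|_\infty\Dx^2)$. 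The bracket then equals $\Dt(\alpha_i\varphi_x(x)-L_i(x,\alpha_i))$ with an error bounded by $C\|\varphi_{xx}\|_\infty(\mu^2\Dt^2+\Dx^2)$ uniformly in $|\alpha_i|\le\mu$.

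Since $H_i(\varphi_x(x))=\sup_{\alpha_i}\{\alpha_i\varphi_x(x)-L_i(x,\alpha_i)\}$ by \eqref{hamilt}, with the supremum attained in $[-\mu,\mu]$ (Proposition \ref{Hproperty}(i)), the uniform error transfers through the supremum to give $|\varphi(x)-S[\hat\varphi](x)-\Dt\,H_i(\varphi_x(x))|\le C\|\varphi_{xx}\|_\infty(\mu^2\Dt^2+\Dx^2)$; dividing by $\Dt$ and absorbing $\mu$ into the constant yields \eqref{cons31}. \emph{Part ii), junction.} At $x=0$ only the junction branch survives, and finiteness of $L_0$ forces $\alpha_0=0$, hence $s_0L_0(\alpha_0)=s_0\bar L_0=-s_0A$. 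Expanding $\varphi$ along $J_j$ at the foot $-(\Dt-s_0)\alpha_j e_j$ (whose distance to the origin is $(\Dt-s_0)|\alpha_j|$, with $|\alpha_j|=-\alpha_j$) and applying Lemma \ref{inter}, each representative term becomes $(\Dt-s_0)(\alpha_j\partial_j\varphi(0)-L_j(0,\alpha_j))+s_0A$ up to $O(\|\varphi_{xx}\|_\infty(\Dt^2+\Dx^2))$. Optimizing first over $\alpha_j\le0$ and then over $j$ produces $\max_j H^-_j(0,\partial_j\varphi(0))$ by Proposition \ref{Hproperty}(ii), leaving
$$\varphi(0)-S[\hat\varphi](0)=\sup_{s_0\in[0,\Dt]}\Big\{(\Dt-s_0)\,\max_{j}H^-_j(0,\partial_j\varphi(0))+s_0A\Big\}+O(\|\varphi_{xx}\|_\infty(\Dt^2+\Dx^2)),$$
and since this is affine in $s_0$ its supremum is attained at an endpoint and equals $\Dt\max\big(A,\max_j H^-_j(0,\partial_j\varphi(0))\big)=\Dt\,F_A(\varphi_x(0))$; dividing by $\Dt$ gives \eqref{cons3}.

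The main obstacle is the junction analysis of Part ii): one must check that after optimizing over the control $\alpha_j$ the remaining maximization over the sojourn time $s_0$ reproduces exactly the outer $\max$ defining $F_A$, and that all error contributions (the Taylor expansion along the arc, the interpolation error, and the two time-fragments $\Dt-s_0$ and $s_0$) stay uniformly $O(\|\varphi_{xx}\|_\infty(\Dt^2+\Dx^2))$ across the nested infima — which relies on the controls remaining bounded by $\mu$ so the displacement $(\Dt-s_0)|\alpha_j|$ is $O(\Dt)$. The remaining steps are routine Taylor/interpolation bookkeeping.
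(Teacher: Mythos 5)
Your proposal is correct and follows essentially the same route as the paper: under the CFL condition the interior case reduces to the non-crossing branch, Taylor expansion plus the smooth interpolation bound of Lemma \ref{inter} turn the discrete infimum into the Legendre transform defining $H_i$, and at the junction the affine dependence on the sojourn time $s_0$ (the paper's $K_{\Dt}=s_0/\Dt$) produces exactly $\max(A,\max_j H_j^-)=F_A$. The only difference is that you spell out why the crossing branch can be discarded (via $|x|\ge\Dx$ and the bound $|\alpha_i|\le\mu$), a point the paper states without elaboration.
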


\begin{proof}

\noindent $i)$ Let $x\in J_i$. We remark that the condition \eqref{CFLg} implies in particular that the scheme reads
\begin{align*}S[\hat \varphi](x)=&  \inf_{\alpha_i< \frac{|x|}{\Dt} }(\I[\hat \varphi](x-\Dt \alpha_i e_i )+\Dt L_i(x,\alpha_i) )\\
=&  \inf_{\alpha_i\in \R }(\I[\hat \varphi](x-\Dt \alpha_i e_i )+\Dt L_i(x,\alpha_i) ).
\end{align*}
By \eqref{interperr} and by Taylor expansion we have 
$$\I[\hat \varphi](x-\Dt \alpha_i e_i )=\varphi(x)-\Dt \alpha_i \partial_i \varphi(x)+\mathcal O(\Dt^2+\Delta x^2),$$ then
\begin{align*}
\frac{\varphi(x) - S[\hat \varphi](x)}{\Dt}=&-\inf_{\alpha_i\in \R}\left(-\alpha_i \partial_i \varphi(x)+ L_i(x,\alpha_i) \right)+\mathcal O \left(\Dt+\frac{\Dx^2}{\Dt}\right)\\
=&\sup_{\alpha_i\in \R }\left(\alpha_i \partial_i \varphi(x)- L_i(x,\alpha_i) \right)+\mathcal O\left(\Dt+\frac{\Dx^2}{\Dt}\right)\\
=&H_i(  \varphi_x(x))+\mathcal O\left(\frac{\Dx^2}{\Dt}+\Dt\right).
\end{align*}
\medskip

\noindent $ii)$ Let  $x=0$.
In this case
$$S[\hat \varphi](0)=  \inf_{s_0 \in [0,\Dt] }\min_{j, \alpha_j\leq 0 } (\I[\hat \varphi](-(\Dt- s_0) \alpha_j e_j )+(\Dt- s_0) L_j(0,\alpha_j) + s_0  L_0(\alpha_0)).$$
Let us define $K_{\Dt}:=\frac{s_0}{\Dt}$,
since $s_0\in [0,\Dt]$ we have $K_{\Dt}\in[0,1]$. 
Again by Taylor expansion, by Prop. \ref{Hproperty}   and by \eqref{interperr}, we have 
\begin{align*}
&\frac{\varphi(0) - S[\hat \varphi](0)}{\Dt} +\mathcal O \left(\Dt+\frac{\Dx^2}{\Dt}\right)\\
=&-  \inf_{{{K_{\Dt} \in [0,1]}} }\min_{j, \alpha_j\leq 0 }\left(-(1-K_{\Dt})\alpha_j \partial_j \varphi(0)+(1-K_{\Dt}) L_j(0,\alpha_j)+K_{\Dt}L_0(\alpha_0) \right) \\
=&-  \inf_{K_{\Dt} \in [0,1] }\left[(1-K_{\Dt})\underset{  j ,\alpha_j\leq 0}\min\left(-\alpha_j \partial_j \varphi(0)+ L_j(0,\alpha_j)\right)+K_{\Dt}\min_{\alpha_0}\left( L_0(\alpha_0) \right)\right]\\
=&\sup_{K_{\Dt} \in [0,1] }\left[(1-K_{\Dt})\underset{ j ,\alpha_j\leq 0}\max\left(\alpha_j \partial_j \varphi(0)- L_j(0,\alpha_j)\right)+K_{\Dt}\max_{\alpha_0}\left(- L_0(\alpha_0) \right)\right] \\
=&\sup_{K_{\Dt} \in [0,1] }\left\{(1-K_{\Dt})\,\underset{  j }\max{\, H_j^-(\partial_j \varphi(0))}+K_{\Dt}A \right\} \\
=&\max\left(\underset{ j }\max\, {H_j^-(\partial_j \varphi(0))},A\right).
\end{align*}
This ends the proof of the proposition.

\qed\end{proof}

\begin{remark}[Counterexample for consistency errors]
The case that we study behaves differently from classic SL schemes, where consistency error estimate is not limited by a CFL-like condition. This difference is due to the presence of discontinuities on the Hamiltonians in correspondence with the junction point.   We provide a counterexample clarifying the scenario.

Let us consider a simple junction  $J:=J_1\cup J_2=(-\infty,0]\cup [0,+\infty)$, provided with the Hamiltonians
$$ H_1(p)=\frac{|p|^2}{2}-1=\max_{\alpha_1}\left(\alpha_1\cdot p-\frac{\alpha_1^2}{2}-1\right),\; H_2(p)=\max_{\alpha_2}\left(\alpha_2\cdot p-\frac{\alpha^2_2}{2}-2\right),$$
and the flux limiter $A=-1=\max\min_p H_i(p)$ (small enough to not be considered).
We check the consistency at $x=\Delta x$ for the smooth function $\varphi=1-x$.
We can check that the scheme \eqref{eq:slscheme} reads, if $\alpha_2\leq\Delta x/\Delta t$
\begin{equation}\label{sc1}
S[\varphi](\Delta x)=\inf_{\alpha_2}\left(1+(\Delta x-\alpha_2 \Delta t)+\Dt\left(\frac{\alpha_2^2}{2}+2\right)\right)=1+\Dx+\frac{3}{2}\Dt
\end{equation}
(the minimum is reached for $\alpha_2=1$) and if $\alpha_2>\Delta x/\Delta t$
\begin{multline}\label{sc2}
S[\varphi](\Delta x)=\\
\inf_{\alpha_2}\inf_{\alpha_1}\left[1+\left(\Dt-\frac{\Dx}{\alpha_2}\right)\alpha_1+\left(\Dt-\frac{\Dx}{\alpha_2}\right)\left(\frac{\alpha_1^2}{2}+1\right)+\frac{\Dx}{\alpha_2}\left(\frac{\alpha_2^2}{2}+2\right)\right]\\
=1+\sqrt{3}\Delta x+\frac{\Delta t}{2}, 
\end{multline}
where the minimum corresponds to $\alpha_1=-1$ and $\alpha_2=\sqrt{3}$. The minimum between the two options depends on the rate $\Dt/\Dx$: simply comparing the two output we notice that for $\frac{\Dt}{\Dx}\leq \sqrt{3}-1$ the scheme assumes the form \eqref{sc1}, while if $\frac{\Dt}{\Dx}> \sqrt{3}-1$, \eqref{sc2}. We notice that \eqref{sc1} is consistent with the equation  since
\begin{multline*}\frac{\varphi(\Dx)-S[\varphi](\Delta x)}{\Dt}-H_2(\varphi_x(\Delta x))=\\
\frac{1+\Delta x-(1+\Dx+3/2\Dt)}{\Dt}-\left(\frac{|\varphi_x(\Delta x)|^2}{2}-2\right)=0
\end{multline*}
while instead for \eqref{sc2}
\begin{multline*}\frac{\varphi(\Dx)-S[\varphi](\Delta x)}{\Dt}-H_2(\varphi_x(\Delta x))=\\
\frac{1+\Delta x-(1+\sqrt{3}\Dx+1/2\Dt)}{\Dt}-\left(\frac{|\varphi_x(\Delta x)|^2}{2}-2\right)=1+\sqrt{3}\left(\frac{\Dx}{\Dt}\right).
\end{multline*}
The latter means that if $\frac{\Dt}{\Dx}> \sqrt{3}-1$, no consistency error can be found for the scheme \eqref{eq:slscheme}.
\end{remark}

It is worth to underline that consistency (without consistency error estimate) holds \emph{without assuming} \eqref{CFLg}, and consequently the scheme is convergent without any CFL condition, as we show at the end of this section.   

\begin{definition}\label{def.cons}

Let $x\in J$ and $(\Delta x_m ,\Dt_m)\to0$ as $m\to \infty$. Let $y_m\in J^{\Delta x_m}$ be  a sequence of grid points  such that $y_m\to x$ as $m\to \infty$.
 The scheme $S_{\Dt }$ is  said to be {\bf{consistent}} with \eqref{eq:hjnet} if
 the following properties hold:
 \begin{itemize}
 \item[$i)$] If $x\in J_i$, for all test function $\varphi \in C^2(J)$, we have
\be\label{cons}
\frac{\varphi(y_m) - S[\hat \varphi](y_m)}{\Dt}\to H_i(\varphi_x(x))\quad{\textrm{as}}\;m\to \infty,\ee 
\item[$ii)$] If $x=0$, for all test function $\varphi \in C^2(J)$ such that $\partial_i \varphi(0)=p_i^A$ for $i=1,\dots,N$,
where $p_i^A\in \R$ are such that $H_i(p_i^A)=H^+_i(p_i^A)=A$ and $H_i^+(p):=\sup _{\alpha_i\geq 0}(\alpha_i \cdot p-L_i(\alpha))$, we have
\be\label{cons2}
\frac{\varphi(y_m) - S[\hat \varphi](y_m)}{\Dt_m}\to F_{A}(\varphi_{x}(x)) \quad{\textrm{as}}\;m\to \infty.\ee 

\end{itemize}
\end{definition}

\begin{proposition}\label{consis}
Assume that $\Delta x^2/\Dt\to 0$
Then, the scheme \eqref{eq:slscheme} is consistent according to Definition \ref{def.cons}.
\end{proposition}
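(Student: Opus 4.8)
The plan is to check separately the two regimes of Definition \ref{def.cons}, reducing each to the one-step estimates already obtained in Proposition \ref{consisrates}; the only novelty is that the global CFL bound \eqref{CFLg} is replaced by the weaker hypothesis $\Dx^2/\Dt\to0$.

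\emph{Interior points.} Let $x\in J_i$, $x\neq0$, and $y_m\in J^{\Dx_m}$ with $y_m\to x$, so that $|y_m|\to|x|>0$. By the Remark after Proposition \ref{stability} the controls realising \eqref{eq:slscheme} are bounded by $\mu$ (see \eqref{CFL}); hence for $m$ large $\mu\,\Dt_m<|y_m|$, i.e. $\mu<|y_m|/\Dt_m$, so the one-step discrete characteristic issued from $y_m$ cannot reach the origin and the scheme collapses onto its pure-arc branch, exactly the situation produced by \eqref{CFLg} in the proof of Proposition \ref{consisrates}$\,i)$. Taylor expanding $\varphi$ at $y_m$ and inserting the interpolation bound \eqref{interperr} gives $\big(\varphi(y_m)-S[\hat\varphi](y_m)\big)/\Dt_m=H_i(\varphi_x(y_m))+\mathcal O(\Dt_m+\Dx_m^2/\Dt_m)$; since $\varphi\in C^2(J)$ makes $\varphi_x(y_m)\to\varphi_x(x)$ and the error tends to $0$, \eqref{cons} follows.

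\emph{Junction point: the target value.} Let $x=0$ and $\varphi\in C^2(J)$ with $\partial_i\varphi(0)=p_i^A$, where $H_i(p_i^A)=H_i^+(p_i^A)=A$. The equality $H_i^+(p_i^A)=H_i(p_i^A)$ forces $p_i^A\geq\hat p_i$, whence $H_i^-(p_i^A)=H_i(\hat p_i)=\min_p H_i(p)$ and $\max_i H_i^-(p_i^A)=\bar H^0\leq A$. Therefore $F_A(\varphi_x(0))=\max\!\big(A,\max_i H_i^-(p_i^A)\big)=A$, and it suffices to prove that $\big(\varphi(y_m)-S[\hat\varphi](y_m)\big)/\Dt_m\to A$ for every $y_m\to0$.

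\emph{Junction point: matching bounds.} Write $y_m\in J_i$ with $|y_m|\to0$ (the case $y_m=0$ is the direct junction branch of \eqref{eq:slscheme}). The scheme value is the minimum of a pure-arc value and a crossing value, so the quotient is the maximum of the two corresponding differences. Expanding $\varphi$ to first order at the origin along each arc via $\partial_i\varphi(0)=p_i^A$, $\partial_j\varphi(0)=p_j^A$, bounding the interpolation error by \eqref{interperr}, and using the convex-duality identities of Proposition \ref{Hproperty} together with $L_0(0)=\bar L_0=-A$, the crossing difference (time $\tau_i=|y_m|/\alpha_i$ on $J_i$ with $\alpha_i>0$, time $s_0$ at the origin with $\alpha_0=0$, time $\tau_j=\Dt_m-s_0-\tau_i$ on $J_j$ with $\alpha_j\le0$) is controlled by a convex combination of the coefficients $H_i^+(y_m,p_i^A)$, $\max_j H_j^-(p_j^A)$ and $A$:
\[
\frac{\varphi(y_m)-S[\hat\varphi](y_m)}{\Dt_m}\leq \max\!\Big(H_i^+(y_m,p_i^A),\,\max_j H_j^-(p_j^A),\,A\Big)+\mathcal O\!\Big(\Dt_m+\frac{\Dx_m^2}{\Dt_m}\Big),
\]
while the arc difference is $H_i(\varphi_x(y_m))+\mathcal O(\Dt_m+\Dx_m^2/\Dt_m)$. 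Since $H_i^+(y_m,p_i^A)\to H_i^+(0,p_i^A)=A$, $H_i(\varphi_x(y_m))\to H_i(p_i^A)=A$ and $\max_j H_j^-(p_j^A)=\bar H^0\leq A$, both differences have $\limsup\leq A$. For the reverse inequality I exhibit a nearly optimal trajectory: if $|y_m|/\Dt_m$ exceeds the (bounded) maximiser $\alpha_i^*$ of $\alpha\mapsto\alpha p_i^A-L_i(y_m,\alpha)$, the pure-arc branch already yields $H_i(\varphi_x(y_m))\to A$; otherwise take $\alpha_i=\alpha_i^*\geq|y_m|/\Dt_m$, $\tau_j=0$ and $s_0=\Dt_m-|y_m|/\alpha_i^*$, whose cost gives $(|y_m|/\alpha_i^*)\,H_i^+(y_m,p_i^A)+s_0A=\Dt_m A+o(\Dt_m)$. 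Hence $\liminf\geq A$, and \eqref{cons2} holds.

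\emph{Main obstacle.} The delicate regime is $x=0$ with $|y_m|/\Dt_m$ of order one: a single step then splits its duration in comparable proportions among the incoming arc $J_i$, the junction and an outgoing arc $J_j$, and the time $\tau_i=|y_m|/\alpha_i$ spent on $J_i$ is coupled to the control $\alpha_i$, so the difference quotient is not a free linear functional on the simplex and the upper and lower bounds must be argued by hand. What makes every admissible splitting produce the same limit $A$ is structural, and is exactly why $\partial_i\varphi(0)=p_i^A$ is imposed in Definition \ref{def.cons}$\,ii)$: the incoming coefficient is $H_i^+(y_m,p_i^A)\to A$ because $H_i(p_i^A)=A$, while the outgoing coefficient never exceeds $\bar H^0\le A$. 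Finally, it is here that $\Dx^2/\Dt\to0$, and not a CFL inequality, is needed to absorb the interpolation error $\mathcal O(\Dx_m^2/\Dt_m)$.
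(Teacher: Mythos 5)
Your proof is correct and follows essentially the same route as the paper: the same splitting of $S[\hat\varphi](y_m)$ into the pure-arc and crossing branches (the paper's $\mathcal I_1$, $\mathcal I_2$), the same use of $H_i^+$, $H_j^-\le\bar H^0\le A$ and $L_0(0)=-A$ to get $\limsup\le A$, and the same case distinction on whether the unconstrained maximiser $\alpha_i^*$ satisfies $\alpha_i^*\lessgtr|y_m|/\Dt_m$ to produce the matching lower bound. The only cosmetic difference is that you bound the crossing quotient as a convex combination of three coefficients, whereas the paper first optimises out $s_0$ and $(j,\alpha_j)$ explicitly.
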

\begin{proof}
Let us consider a sequence $y_m$ such that $y_m\to x$ as $\Delta_m\rightarrow (0,0)$.
For notational convenience we drop the index $m$ of the sequence of grid points. 
In case the limit point $x$ is not on the junction since $x$ is fixed  for every sequence $(\Dx_m,\Dt_m)\rightarrow (0,0)$, $y$ definitely verifies $|y|>\mu\Delta t_m$ \emph{independently} from the rate $\Dt_m/\Dx_m$.
Then, the consistency follows as  \textbf{Case 1}  in the proof of Prop. \ref{consisrates} (without the condition $\Delta t/\Delta x\leq 1/\mu$).\medskip

The situation is more complex when the limit point $x$ is $0$.
If $y\equiv0$, this case  is equivalent to  \textbf{Case 2}  in the proof of Prop. \ref{consisrates}. If $y$ is such that $y\to 0 $ and $y \neq 0$,  up to a subsequence, we can assume that $y\in J_i$, for some $i$ independent of $m$. In that case, the optimal trajectory can cross the junction in one time step.
Let $\varphi \in C^2(J)$ such that $\partial_i \varphi(0)=p_i^A$ for $i=1,\dots,N$ and let us define the two quantities:
\begin{align*}
\mathcal I_1:=&\inf_{\alpha_i< \frac{|y|}{\Dt} }(\I[\hat \varphi](y-\Dt \alpha_i e_i )+\Dt L_i(y,\alpha_i) ),\\
\mathcal I_2:= &\inf\limits_{\alpha,\alpha_i\geq \frac{|y|}{\Delta t}}\inf\limits_{ s_0\in[0,\Delta t-\frac{|y|}{\alpha_i}]} \min\limits_{j,\alpha_j\leq 0}\left\{ \I[\hat \varphi]\left(-\left(\Delta t- s_0-\frac{|y|}{\alpha_i}\right)\alpha_j e_j\right)\right.\\
 &\left.\hspace{1cm}+\left(\Delta t- s_0-\frac{|y|}{\alpha_i}\right)L_j(\alpha_j)+ s_0 L_0(\alpha_0)+\frac{|y|}{\alpha_i}L_i(\alpha_i)\right\}. 
 \end{align*}
 We remark that $S[\varphi](y)=\min(\mathcal I_1,\mathcal I_2).$
We begin with the term $\mathcal I_1$. Using \eqref{interperr} and a Taylor expansion, we get
\begin{align}\label{eq:est-I1-1}
\mathcal I_1=&\inf_{\alpha_i\le \frac {|y|}{\Dt}}\left\{\varphi(y)-\a_i\Dt \partial _i\varphi(y)+\Dt L_i(y,\a_i)\right\} +\mathcal O(\Dx^2+\Dt^2)\nonumber\\
=&\varphi(y)-\Dt \sup _{\alpha_i\le \frac {|y|}{\Dt}}\left\{\a_i \partial _i\varphi(y)- L_i(y,\a_i)\right\} +\mathcal O(\Dx^2+\Dt^2)
\end{align}
Using that 
\begin{align*}
\sup _{\alpha_i\le \frac {|y|}{\Dt}}\left\{\a_i \partial _i\varphi(y)- L_i(y,\a_i)\right\} \le &\sup _{\alpha_i\in \R}\left\{\a_i \partial _i\varphi(y)- L_i(y,\a_i)\right\} \\
=& H_i(y,\partial_i\varphi(y))=A+o(1),
\end{align*}
 we deduce that
\be\label{eq:est-I1-2}
\mathcal I_1\ge \varphi(y) - \Dt A +\Dt\, o(1) +\mathcal O(\Dx^2+\Dt^2).
\ee
For the term $\mathcal I_2$, with \eqref{interperr}, adding into the argument of $\varphi$ the term $y-\frac{|y|}{\alpha_i}\alpha_i e_i=0$ and using the Taylor expansion twice we obtain
 \begin{multline*}\I[\varphi]\left(-\left(\Delta t- s_0-\frac{|y|}{\alpha_i}\right)\alpha_j e_j\right)=\\
 \varphi(y)-\frac{|y|}{\alpha_i}\alpha_i \partial_i\varphi(y)-\left(\Delta t- s_0-\frac{|y|}{\alpha_i}\right)\alpha_j \partial_j\varphi(0)+\mathcal O(\Delta t^2+\Dx^2).
\end{multline*}
This implies
\begin{align*}
&\mathcal I_2+\mathcal O(\Dt^2+\Dx^2)\\
=&\inf_{\alpha_i\geq \frac{|y|}{\Delta t}} \inf_{s_0 \in [0,\Dt-\frac{|y|}{\a_i}] }\bigg\{\min_{j}\min_{\alpha_j\leq 0 }\left\{-\left(\Delta t- s_0-\frac{|y|}{\alpha_i}\right)(\alpha_j \partial_j \varphi(0) - L_j(0,\alpha_j))\right\}\\
&\quad \hspace{2cm}+\varphi(y)-\frac {|y|}{\a_i}(\alpha_i \partial_i \varphi(y) - L_i(y,\alpha_i))+s_0L_0(\alpha_0) \bigg\}\\
&=\varphi(y)+\inf_{\alpha_i\geq \frac{|y|}{\Delta t}} \inf_{s_0 \in [0,\Dt-\frac{|y|}{\a_i}] }\bigg\{-\frac {|y|}{\a_i}(\alpha_i \partial_i \varphi(y) - L_i(y,\alpha_i))+s_0L_0(\alpha_0)\\
&\quad\hspace{2cm}-\left(\Delta t- s_0-\frac{|y|}{\alpha_i}\right)\max_{j}\max_{\alpha_j\leq 0 }\left\{(\alpha_j \partial_j \varphi(0) - L_j(0,\alpha_j))\right\}\bigg\}\\
&=\varphi(y)+\inf_{\alpha_i\geq \frac{|y|}{\Delta t}} \inf_{s_0 \in [0,\Dt-\frac{|y|}{\a_i}] }\bigg\{-\frac {|y|}{\a_i}(\alpha_i \partial_i \varphi(y) - L_i(y,\alpha_i))+s_0L_0(\alpha_0)\\
&\quad\hspace{2cm}-\left(\Delta t- s_0-\frac{|y|}{\alpha_i}\right) \max_j H_j^-(0,\partial _j\varphi (0))\bigg\}
\end{align*}
Using $\max_j H_j^-(0,\partial _j\varphi (0))=\max_j \min_p H_j(p)=\bar H^0$, and $L_0(\alpha_0)=-A$, we deduce that (we use $\bar H^0\le A$)
\begin{align}\label{eq:est-I2-1}
&\mathcal I_2+\mathcal O(\Dt^2+\Dx^2)\nonumber\\
&=\varphi(y)+\inf_{\alpha_i\geq \frac{|y|}{\Delta t}}\bigg\{-\frac {|y|}{\a_i}(\alpha_i \partial_i \varphi(y) - L_i(y,\alpha_i)) \nonumber\\
&\hspace{3cm}+\inf_{s_0 \in [0,\Dt-\frac{|y|}{\a_i}] }\left\{s_0(\bar H^0-A) -\left(\Dt -\frac{|y|}{\a_i}\right)\bar H^0\right\}\bigg\}\nonumber\\
&=\varphi(y)+\inf_{\alpha_i\geq \frac{|y|}{\Delta t}}\bigg\{-\frac {|y|}{\a_i}(\alpha_i \partial_i \varphi(y) - L_i(y,\alpha_i)) -\left(\Dt -\frac{|y|}{\a_i}\right)A\bigg\}\\\nonumber
&=-\Dt\, A+\varphi(y)-\sup_{\alpha_i\geq \frac{|y|}{\Delta t}}\bigg\{\frac {|y|}{\a_i}(\alpha_i \partial_i \varphi(y) - L_i(y,\alpha_i)-A) \bigg\}\nonumber
\end{align}
Using $\frac {|y|}{\a_i}\le \Dt$ in the last sup, and $\alpha_i \partial_i \varphi(y) - L_i(y,\alpha_i)-A\le o(1)$, we get
\be\label{eq:est-I2-2}
\mathcal I_2+\mathcal O(\Dt^2+\Dx^2)\ge -\Dt \, A+\varphi(y) +\Dt o(1).
\ee
Using \eqref{eq:est-I1-2} and \eqref{eq:est-I2-2}, we finally get 
\be\label{eq:estI}
S[\varphi](y)=\min(\mathcal I_1,\mathcal I_2)\ge -\Dt A+\varphi(y) +\Dt o(1)+\mathcal O(\Dt^2+\Dx^2).
\ee
We now want to show that this inequality is in fact an equality. We denote by $\bar \a_i$ the solution of
$$\sup_{\a_i\in \R}\{\alpha_i \partial_i \varphi(y) - L_i(y,\alpha_i)\},$$
and we distinguish two cases.
Firstly, we consider the case $\bar \a_i\le \frac {|y|}{\Dt}$. This implies in particular that
\begin{align*}\sup_{\a_i\le \frac {|y|}{\Dt}}\{\alpha_i \partial_i \varphi(y) - L_i(y,\alpha_i)\}=&\sup_{\a_i\in \R}\{\alpha_i \partial_i \varphi(y) - L_i(y,\alpha_i)\}\\
=&H_i(y,\partial_i \varphi (y))=A+o(1).
\end{align*}
Using \eqref{eq:est-I1-1}, we deduce that
$$\mathcal I_1= -\Dt A+\varphi(y) +\Dt o(1)+\mathcal O(\Dt^2+\Dx^2)$$
and so \eqref{eq:estI} is an equality.

We now consider the case $\bar \a_i\ge \frac {|y|}{\Dt}$. We define
$$\bar {\mathcal I}_2:=\sup_{\alpha_i\geq \frac{|y|}{\Delta t}}\bigg\{\frac {|y|}{\a_i}(\alpha_i \partial_i \varphi(y) - L_i(y,\alpha_i)-A) \bigg\}.$$
Using that $0\le\frac{|y|}{\a_i}\le\Dt$ and that $\alpha_i \partial_i \varphi(y) - L_i(y,\alpha_i)-A\le o(1)$, we get
\begin{align*}
\Dt\,  o(1)\ge \bar{\mathcal I}_2
\ge &\Dt \left\{\sup_{\alpha_i\geq \frac{|y|}{\Delta t}}\left\{\frac {|y|}{\a_i}(\alpha_i \partial_i \varphi(y) - L_i(y,\alpha_i)\right\}-A\right\}\\
=&\Dt (H_i(y,\partial _i\varphi(y))-A)=\Dt \,o(1).
\end{align*}
This implies again that  \eqref{eq:estI} is an equality and ends the proof.

\qed\end{proof}

\begin{theorem}[Convergence]
Assume that $\frac{\Dx^2}{\Dt}\to 0$ and let $T>0$ and $u_0$ be a Lipschitz continuous function on $J$. Then the numerical solution of \eqref{eq:slscheme} $w(t,x)$ converges locally uniformly as $\Delta\to (0,0)$  to the unique (weak) viscosity solution $u(t,x)$ of \eqref{eq:hjnet} on any compact set $\mathcal K$ contained in the domain $(0,T)\times J$, i.e.
$$ \limsup_{\Delta x,\Delta t\rightarrow 0} \sup_{(t,x)\in \mathcal K \cap \G^{\Delta }}|w(t,x)-u(t,x)|=0.$$
\end{theorem}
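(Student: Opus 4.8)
The plan is to follow the Barles--Souganidis framework, adapted to the flux-limited setting, via the method of half-relaxed limits together with the comparison principle for \eqref{eq:hjnet} established in \cite{imbert2013flux}. The three structural ingredients are already available: monotonicity (Proposition \ref{monotonicity}), stability (Proposition \ref{stability}), and consistency (Proposition \ref{consis}, which applies precisely because the hypothesis $\Dx^2/\Dt\to 0$ is assumed). First I would introduce the upper and lower half-relaxed limits
$$\overline{u}(t,x):=\limsup_{\substack{(s,y)\to(t,x)\\ \Delta\to(0,0)}} w(s,y),\qquad \underline{u}(t,x):=\liminf_{\substack{(s,y)\to(t,x)\\ \Delta\to(0,0)}} w(s,y),$$
with $(s,y)$ ranging over the grids $\mathcal{G}^{\Delta}$. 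The stability estimate of Proposition \ref{stability} makes $w$ uniformly bounded on compact subsets, so $\overline{u}$ and $\underline{u}$ are finite and satisfy $\underline{u}\leq\overline{u}$ by construction; moreover $\overline{u}$ is upper semicontinuous and $\underline{u}$ is lower semicontinuous. The spatial Lipschitz bound of Proposition \ref{lipw} transfers to both limits, and the estimate $|w(t_n,x)-u_0(x)|\leq K t_n$ yields $\overline{u}(0,\cdot)\leq u_0$ and $\underline{u}(0,\cdot)\geq u_0$, so the initial data are matched.

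The core step is to show that $\overline{u}$ is a flux-limited subsolution and $\underline{u}$ a flux-limited supersolution of \eqref{eq:hjnet}. Given a test function $\varphi$ touching $\overline{u}$ from above at an interior point $(t_0,x_0)$ with $x_0\in J_i\setminus\{0\}$, the standard argument produces grid points $(s_m,y_m)\to(t_0,x_0)$ at which $w-\varphi$ has a local maximum; rewriting the explicit scheme $w(t_{n+1},\cdot)=S[\hat w^n](\cdot)$ at these points, using monotonicity to replace $w^n$ by $\varphi$ up to the maximum value, and passing to the limit through the consistency relation \eqref{cons} (the time derivative arising from the difference quotient in $t$) yields $\varphi_t+H_i(\varphi_x)\leq 0$ at $(t_0,x_0)$.

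The delicate point is the junction $x_0=0$, where I would invoke the equivalent characterization of Theorem \ref{th:1}: it suffices to test only with functions of the special form $\varphi(t,x)=\psi(t)+\phi^0(x)$ whose spatial slopes satisfy $\partial_i\phi^0(0)=p_i^A$. For exactly this reduced class, the junction consistency \eqref{cons2} of Definition \ref{def.cons} applies and gives $\varphi_t+F_A(\varphi_x)\leq 0$ at $(t_0,0)$, so Theorem \ref{th:1}(ii) upgrades this to the full flux-limited subsolution property at the junction. The supersolution property for $\underline{u}$ follows symmetrically through Theorem \ref{th:1}(iii), taking local minima and reversing the inequalities. I expect this junction analysis to be the main obstacle: one must check that the restricted consistency \eqref{cons2}, which probes only the critical slopes $p_i^A$, interfaces correctly with the reduced test-function class of Theorem \ref{th:1}, and that the limit in the two-branch term of the scheme (where a discrete characteristic crosses $0$) respects the $\max$ structure of $F_A$.

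Finally I would close the argument with the comparison principle. Since $\overline{u}$ is a subsolution and $\underline{u}$ a supersolution on $[0,T)\times J$ with $\overline{u}(0,\cdot)\leq u_0\leq\underline{u}(0,\cdot)$, the comparison result of \cite{imbert2013flux} gives $\overline{u}\leq\underline{u}$ on $(0,T)\times J$. Combined with the trivial inequality $\underline{u}\leq\overline{u}$, this forces $\overline{u}=\underline{u}=:u$, which is therefore continuous and coincides with the unique flux-limited solution (Theorem \ref{hjcont}). The equality of the two half-relaxed limits is well known to be equivalent to the locally uniform convergence $\sup_{\mathcal{K}\cap\mathcal{G}^{\Delta}}|w-u|\to 0$ on every compact $\mathcal{K}\subset(0,T)\times J$, which is the assertion of the theorem.
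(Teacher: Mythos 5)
Your proposal is correct and follows essentially the same route as the paper: the paper's proof is a two-line appeal to the Barles--Souganidis framework using exactly the three ingredients you assemble (Proposition \ref{monotonicity}, Proposition \ref{stability}, Proposition \ref{consis}), together with the observation that the test-function class in Definition \ref{def.cons}(ii) matches the reduced class of Theorem \ref{th:1}(ii)--(iii), and then the comparison principle of \cite{imbert2013flux}. You have simply written out the half-relaxed-limit argument that the paper leaves implicit.
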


\begin{proof}
Since the scheme is consistent (for a subsequence verifying $\Dx^2/\Dt \to 0$), monotone and stable. We can follow
\cite{BS91,costeseque2015convergent,imbert2013flux} and obtain the result. Note that the choice of the test functions in the definition of the consistency at the junction is consistent with Theorem \ref{th:1} ii)
\qed\end{proof}

\section{Convergence estimates}\label{sect:bounds}
In this section, we introduce the main result of the paper. We formulate the result under two possible cases: firstly for special Hamiltonians, and secondly in a more generic scenario. 

\subsection{Space independent Hamiltonians.} 

We suppose 
\begin{itemize}
\item[(\bf{A3})]
 the lagrangians $L_i(x,\alpha_i)\equiv L_i(y,\alpha_i)$ for every choice of $x,y\in J_i$. 
 \end{itemize}
 We observe that as consequence of \textbf{(A3)} the optimal control $\bar \alpha_i$ is constant along the arc.

\begin{theorem}\label{teo:bounds1}
Let \textbf{(A1)}-\textbf{(A3)} verified. 
Considered $u$ a viscosity solution of \eqref{eq:hjnet}, and $w$ a solution of the scheme \eqref{eq:slscheme}.  Then, there exists a positive constant $C$ depending only on the Lipschitz constant of $u$ such that 
\begin{equation}
 \sup_{(t,x )\in  \mathcal G^{\Delta }}|u(t,x)-w(t,x)| \leq CT \Delta x. 
\end{equation}
\end{theorem}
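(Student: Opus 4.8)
The plan is to prove the $O(T\Delta x)$ error bound by the classical comparison argument for monotone schemes, but exploiting the simplification afforded by \textbf{(A3)}: since the optimal control is constant along each arc, the scheme essentially follows exact straight-line characteristics, and the only error comes from the piecewise-linear interpolation operator $\I$. I would first establish that the exact value function $u$ satisfies a \emph{discrete} version of the dynamic programming principle \eqref{eq:dyn} with no time-discretization error, and then control the gap between the scheme $S$ applied to $u$ and $u$ itself purely in terms of $\Delta x$.

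\textbf{Step 1: one-step interpolation consistency.}
The key observation under \textbf{(A3)} is that, along any admissible piecewise-linear trajectory with constant control on each arc, the DPP \eqref{eq:dyn} evaluated over a single time step $[t_n,t_{n+1}]$ holds \emph{exactly} for $u$; the scheme $S$ differs from this exact relation only because it reads the foot-of-characteristic value through $\I[\hat u]$ instead of $u$ itself. Using the $s=1$ error estimate from Lemma \ref{inter}, namely $|\I[\hat u](z)-u(z)|\le C\Delta x$ whenever $u$ is Lipschitz (which holds for $u$ by assumption, and for $w$ by Proposition \ref{lipw}), I would show
$$ |S[\hat u](x) - u(x \text{ propagated exactly})| \le C\Delta x, \quad \text{i.e.}\quad |u(t_{n+1},x)-S[\hat u^n](x)|\le C\Delta x, $$
with $C$ depending only on the Lipschitz constant of $u$. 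The crucial point is that there is \emph{no} $\Delta t$ term and no $\Delta x^2/\Delta t$ term here: because the controls are state-independent the time-stepping is exact, so only the $O(\Delta x)$ spatial interpolation error survives. This is the main place where \textbf{(A3)} is used and is where I expect the real work to lie.

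\textbf{Step 2: propagation via monotonicity.}
Having bounded the one-step defect, I would set $e_n := \sup_{x\in J^{\Delta x}}|u(t_n,x)-w(t_n,x)|$ and run the standard induction. Writing $u(t_{n+1},x) \le S[\hat u^n](x) + C\Delta x$ and $w(t_{n+1},x)=S[\hat w^n](x)$, monotonicity of $S$ (Proposition \ref{monotonicity}) together with the fact that $S$ commutes with the addition of constants (shifting $\hat u^n$ by $e_n$ shifts $S[\hat u^n]$ by $e_n$) gives
$$ u(t_{n+1},x)-w(t_{n+1},x) \le S[\hat u^n + e_n](x) - S[\hat w^n](x) + C\Delta x \le e_n + C\Delta x. $$
The reverse inequality follows symmetrically, so $e_{n+1}\le e_n + C\Delta x$. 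Since $e_0=0$ (both equal $u_0$ on the grid) and $n\le N_T = \lfloor T/\Delta t\rfloor$, this would \emph{naively} yield $e_n \le C n\Delta x$, which is not yet the claimed bound.

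\textbf{Step 3: converting the step count into $T$.}
The subtlety is that a per-step error of $C\Delta x$ summed over $N_T \approx T/\Delta t$ steps gives $C T\Delta x/\Delta t$, not $CT\Delta x$. To recover the stated $CT\Delta x$ I would sharpen Step 1 so that the one-step defect is actually $O(\Delta t \cdot \Delta x)$ rather than $O(\Delta x)$: under \textbf{(A3)} the foot of the characteristic over one step lies within distance $\mu\Delta t$ of $x$, and on that small interval the interpolation error of a Lipschitz function measured against the exact one-step DPP contributes a defect proportional to $\Delta t\,\Delta x$ (intuitively, the interpolation error accumulates only over the swept length, which is $O(\Delta t)$). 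With the one-step defect bounded by $C\Delta t\,\Delta x$, the induction gives $e_n \le C n \Delta t\,\Delta x \le C T\Delta x$, which is exactly the claim. Establishing this refined $O(\Delta t\,\Delta x)$ one-step estimate — rather than the crude $O(\Delta x)$ — is the genuine obstacle, and it is precisely here that the constant-control structure from \textbf{(A3)} must be used to its fullest; I would handle the junction-crossing trajectories separately, noting that the time $\Delta t - s_0 - |x|/\alpha_i$ spent past the junction is itself $O(\Delta t)$, so the same scaling applies there.
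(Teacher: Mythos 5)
Your Steps 1 and 2 reproduce the skeleton of the paper's argument: a one-step comparison between the continuous dynamic programming principle \eqref{eq:dyn} and the scheme, with \textbf{(A3)} used to make the running costs cancel exactly and the piecewise-linear interpolation of a Lipschitz function supplying the $O(\Delta x)$ per-step defect, followed by an induction (the paper propagates the induction hypothesis by choosing explicit suboptimal controls and using the monotonicity of the interpolation operator, rather than your monotonicity-plus-commutation shortcut, but that difference is cosmetic). However, your Step 1 is only half proved. The inequality $S[\hat u^n](x)\ge u(t_{n+1},x)-C\Delta x$ is indeed immediate, because the scheme's trajectories form a subclass of those admissible in \eqref{eq:dyn}. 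The reverse inequality $S[\hat u^n](x)\le u(t_{n+1},x)+C\Delta x$ requires showing that restricting the infimum in \eqref{eq:dyn} to trajectories that cross the junction at most once and use a single constant control on each arc loses nothing. This is where the paper does real work: it uses the \emph{strict} convexity in \textbf{(A1)} together with Jensen's inequality, first to rule out optimal trajectories that revisit a point (hence wander between arcs or touch the junction repeatedly within one time step), and then to replace the time-dependent optimal control on each arc segment by its time average without increasing the cost. Your proposal asserts the two-sided bound without this step; saying that the DPP ``holds exactly'' along constant-control trajectories does not by itself compare the two infima.

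The second, more serious, gap is Step 3. The claimed sharpening of the one-step defect from $O(\Delta x)$ to $O(\Delta t\,\Delta x)$ is not justified, and the heuristic behind it is wrong: the quantity to control is the pointwise interpolation error $\left|\mathbb{I}[\hat u(t_n,\cdot)](z)-u(t_n,z)\right|$ at the single foot point $z$ of the discrete characteristic, and for a function that is merely Lipschitz this is $O(\min(\Delta x,\mu\Delta t))$ at best; it does not ``accumulate over the swept length'' and cannot be made $O(\Delta t\,\Delta x)$ without second-derivative bounds on $u$, which are unavailable. Note that the paper itself does not attempt any such sharpening: its induction yields exactly $|u(t_n,x)-w^n(x)|\le nC\Delta x$ with a per-step increment of $C\Delta x$, i.e. a total of order $N_T\Delta x=(T/\Delta t)\Delta x$, and this is what the written proof actually delivers. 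So your instinct that the naive summation does not literally give $CT\Delta x$ is a fair observation about the argument, but the fix you propose would fail; establishing the bound as stated would require a genuinely different mechanism, not a rescaled interpolation estimate.
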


\begin{proof}
The proof is made by induction assuming that for $n\ge 1$
\begin{equation}\label{eq:rec}
|w^{n-1}(x)-u(t_{n-1},x)|\le (n-1)C\Delta x\quad \forall x\in J^{\Delta x}.
\end{equation}
Note that it is clearly satisfy for $n=1$. We then want to show that 
$$|w^{n}(x)-u(t_n,x)|\le nC\Delta x\quad \forall x\in J^{\Delta x}.$$
From Proposition \ref{hjdpp}, we know that
\begin{multline}\label{eq:dppu}
u(t_n,x):=\\
\inf_{y\in J}\inf_{(X(.),\alpha(.))\in \Gamma_{t_{n-1},y}^{t_n,x}}\left\{u(t_{n-1}, y)+\int\limits_{t_{n-1}}^{t_n} L(X(\tau),\alpha(\tau))d\tau\right\}. 
\end{multline}

We call $\bar\alpha=(\bar\alpha_0,\bar\alpha_1,...,\bar\alpha_N)$ and $\bar s_0$ the optimal argument of $S[w^{n-1}](x)$ and we treat only the case where $x\in J^i\backslash\{0\}$ and  $|x|/\bar\alpha_i<\Dt$ (this corresponds to the more difficult case in which the optimal trajectory cross the junction). We also denote by $\bar X(t)$ (with $t\in[t_{n-1},t_{n}]$) the trajectory obtained applying the control $\bar\alpha$. Clearly such trajectory belongs to $\Gamma_{t_{n-1},\bar X(t_{n-1})}^{t_n,x}$ with $\bar X(t_{n-1})=\left(\Delta t-s_0-\frac{|x|}{\bar \alpha_i}\right)e_j$ and  
 \be \left\{
 \ba{ll}
 \bar X(t)\in J_i &\hbox{ for }t\in\left[t_{n}-\frac{|x|}{\bar \alpha_i},t_{n}\right),\vspace{0.2cm}\\ 
  \bar X(t)=0 &\hbox{ for } t\in\left[t_{n}-\frac{|x|}{\bar \alpha_i}-\bar s_0,t_{n}-\frac{|x|}{\bar \alpha_i}\right),\vspace{0.2cm}\\
  \bar X(t)\in J_j &\hbox{ for }t\in\left[t_{n-1},t_{n}-\frac{|x|}{\bar \alpha_i}-\bar s_0\right].
  \ea\right.
  \ee
    Then
\begin{align*}
&u(t_n,x)-w^n(x)\\
=& \inf_{y\in J}\inf_{(X,\alpha)\in \Gamma_{t_{n-1},y}^{t_n,x}}\left[u(t_{n-1}, y)+\int\limits_{t_{n-1}}^{t_n} L(X(\tau),\alpha(\tau))d\tau\right]-S[w^n](x)\\
\le&
u(t_{n-1},X(t_{n-1}))+\int_{t_{n-1}}^{t_{n}} L(X(\tau),\alpha(\tau))d\tau-S[w^n](x)\\
\leq &u(t_{n-1},X(t_{n-1}))-\I[w^{n-1}](\bar X(t_{n-1}))|\\
&+\int_0^{\Dt-\frac{|x|}{\bar \alpha_i}-\bar s_0} L(\bar X(\tau),\bar\alpha(\tau))d\tau-\left(\Delta t- \bar s_0-\frac{|x|}{\bar\alpha_i}\right)L_j(\bar\alpha_j)\\
& +\int\limits_{\Delta t- \bar s_0-\frac{|x|}{\bar\alpha_i}}^{\Delta t-\frac{|x|}{\bar\alpha_i}} L(\bar X(\tau),\bar\alpha(\tau))d\tau-s_0 L_0(\bar \alpha_0)+\int\limits_{\Delta t-\frac{|x|}{\bar\alpha_i}}^{\Dt} L(\bar X(\tau),\bar\alpha(\tau))d\tau-\frac{|x|}{\bar\alpha_i}L_i(\bar\alpha_i).
\end{align*}
Using that $L(.,\alpha)$ is constant along an arc, we deduce that the cost terms cancel. Moreover, using that
\begin{align*}
&u(t_{n-1},X(t_{n-1}))-\I[w^{n-1}](\bar X(t_{n-1}))|\\
=&u(t_{n-1},X(t_{n-1}))-\I[u(t_{n-1},\cdot)](X(t_{n-1}))+(n-1)C\Dx\\
&+\I[u(t_{n-1},\cdot)-(n-1)C\Dx](X(t_{n-1}))+\I[w^{n-1}](\bar X(t_{n-1}))\\
\le& nC\Delta x,
\end{align*}
where we have used Lemma \ref{inter} (iii) to control the first term and Lemma \ref{inter} (i) joint to \eqref{eq:rec} for the last one.
This implies that 
$$u(t_n,x)-w^n(x)\le nC\Delta x.
$$

For the inverse inequality, we invert the whole argument. An additional difficulty comes for the choice of the good control for the $S[w^{n-1}]$ term. We proceed considering a continuous optimal control $\bar \alpha(\cdot)$ for $u(t_n,x)$ in \eqref{eq:dppu}. Without loss of generality we assume that the associated trajectory $\bar X(t)$ is such that
 \be \left\{
 \ba{lll}
 \bar X(t)\in J_i, & &\quad\hbox{ for }t\in\left(\bar t_2,t_n\right],\\
  \bar X(t)=0, &   &\quad\hbox{ for } t\in\left(\bar t_1,\bar t_2\right],\\
  \bar X(t)\in J_j, &   &\quad\hbox{ for }t\in\left[t_{n-1},\bar t_1\right],
  \ea\right.
  \ee

Indeed, we can exclude that an optimal trajectory pass in one other arc or touch multiple times the junction point thanks to the convexity of the functions $L$. In fact, in such case, it would be necessary for an optimal trajectory to pass twice for the same point, i.e.  $X(\tilde t_1)=\tilde  x$ and $X(\tilde  t_2)=\tilde  x$, with $X(t)\ne\tilde x$ for $t\in (\tilde t_1, \tilde t_2)$. This means that since $\dot X(t) = \bar \alpha(t)$, we have that 
$$\int_{\tilde t_1}^{\tilde t_2}\bar \alpha (\tau) d\tau=X(\tilde t_1)-X(\tilde t_2)=0.$$
Then, the average control on $[\tilde t_1, \tilde t_2]$ is zero. Using the strict convexity and the Jensen's inequality, we find that the optimal control $\bar \alpha$ should be zero. This contradicts the definition of $X$.

We can now build a discrete control and an associated trajectory $(\hat \alpha,\hat X)$ for $S[\hat \varphi](x)$ such that 
\begin{equation*}
\hat \alpha_i=\frac{|x|}{t_{n} - \bar t_2}=\frac{1}{t_{n}-\bar t_2}\int^{t_{n}}_{\bar t_2}\bar \alpha_i(\tau) d\tau, \quad \hat s_0=\bar t_2-\bar t_1,
\end{equation*}
$$ \hat\alpha_j=\frac{1}{\bar t_1-t_{n-1}}\int_{t_{n-1}}^{\bar t_1}\bar \alpha_j(\tau) d\tau.$$
 Then, for construction $\hat X(t_{n-1})=\bar X(t_{n-1})$ and  
\begin{align*}
&S[w^{n-1}](x)-u(t_n,x)\\
=&S[w^{n-1}](x)- u(t_{n-1}, y)+\int_{t_{n-1}}^{t_n} L(\bar X(\tau),\bar \alpha(\tau))d\tau\\
\leq &\I[w^{n-1}](\hat X(t_{n-1}))-u(t_{n-1},\bar X(t_{n-1}))\\
&+\left(\left(\Delta t- \bar t_2\right)L_i(\hat\alpha_i)-\int_{\bar t_2}^{\Delta t} L(\bar X(\tau),\bar\alpha(\tau))d\tau\right)\\
& +\left((\bar t_2-\bar t_1) L_0-\int_{\bar t_1}^{\bar t_2} L(\bar X(\tau),\bar\alpha(\tau))d\tau\right)\\
& +\left(\bar t_1 L_j(\hat\alpha_j)-\int_{t_{n-1}}^{\bar t_1} L(\bar X(\tau),\bar\alpha(\tau))d\tau\right).
\end{align*}
Using  Jensen's inequality knowing that the $L$-functions are convex, we get
\begin{align*}
&\bar t_1 L_j(\hat\alpha_j)-\int_{t_{n-1}}^{\bar t_1} L(\bar X(\tau),\bar\alpha(\tau))d\tau\\
=&\bar t_1 L_j\left(\frac{1}{\bar t_1-t_{n-1}}\int_{t_{n-1}}^{\bar t_1}\bar\alpha_j(\tau)d\tau\right)-\int_{t_{n-1}}^{\bar t_1} L_j(\bar \alpha_j(\tau))d\tau\\
\leq& \int_{t_{n-1}}^{\bar t_1} L_j(\bar \alpha_j(\tau))d\tau-\int_{t_{n-1}}^{\bar t_1} L_j(\bar \alpha_j(\tau))d\tau=0
\end{align*}
The two others cost terms can be treat in a similar way. Finally, using that
\begin{multline*}
\I[w^{n-1}](\hat X(t_{n-1}))-u(t_{n-1},\bar X(t_{n-1}))\le \I[w^{n-1}](\hat X(t_{n-1}))\\
-\I[u(t_{n-1}, \cdot)+(n-1)C\Dx](\bar X(t_{n-1}))+ \I[u(t_{n-1}, \cdot)](\bar X(t_{n-1}))\\+(n-1)C\Dx -u(t_{n-1},\bar X(t_{n-1}))\le nC\Dx
\end{multline*}
where we have used Lemma \ref{inter} (i) joint to \eqref{eq:rec} to control the first term and Lemma \ref{inter} (iii) for the last one.
This implies that 
$$w^n(x)-u(t_n,x)\le nC\Delta x
$$
and concludes the proof.
\qed\end{proof}

\subsection{Space dependent Hamiltonians}

We prove an error estimate for stable schemes for which a consistency error estimate holds.

\begin{theorem}\label{teo:bounds2}
Considered $u$ a viscosity solution of \eqref{eq:hjnet}, and $w$ a solution of a scheme for which the results Lemma \ref{monotonicity} (monotonicity),  Prop. \ref{stability} (stability) and a result similar to Prop. \ref{consisrates} (consistency error estimate) hold, then there exists a positive constant $C$ independent from $\Delta t$ and $\Delta x$ such that 
\begin{equation}\label{esterr}
 \sup_{(t,x )\in  \mathcal G^{\Delta }}|u(t,x)-w(t,x)| \leq C T \left(\frac{\E(\Delta t,\Delta x)}{\sqrt{\Delta t}}+\sqrt{\Delta t}\right) + \sup_{x\in J^{\Dx}}|u_0(x)-w(0,x)|;
\end{equation}
being $\E(\Delta t,\Delta x)$ the consistency error of the scheme. 
\end{theorem}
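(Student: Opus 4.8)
The plan is to adapt the classical error-estimate technique for monotone schemes (Crandall--Lions, Souganidis) to the network setting, feeding the three assumed properties—monotonicity (Proposition \ref{monotonicity}), stability (Proposition \ref{stability}) and the consistency error estimate $\mathcal E$—into a regularization of the merely Lipschitz viscosity solution $u$. Since $u$ is not smooth enough to be inserted directly into a consistency estimate of type \eqref{cons31}--\eqref{cons3}, the first step is to regularize it by a sup-convolution (and, symmetrically, an inf-convolution for the reverse inequality) built with the geodesic distance on $J$,
\[
u^\varepsilon(t,x) := \sup_{(s,y)\in[0,T]\times J}\left\{ u(s,y) - \frac{d(x,y)^2 + (t-s)^2}{2\varepsilon}\right\}.
\]
Using that $u$ is globally Lipschitz, one obtains $u\le u^\varepsilon\le u + C\varepsilon$ together with the semiconvexity bounds $\|u^\varepsilon_{xx}\|_\infty + \|u^\varepsilon_{tt}\|_\infty \leq C/\varepsilon$, so that $u^\varepsilon$ is an admissible test function for the consistency estimate.

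First I would show that $u^\varepsilon$ is an \emph{approximate discrete subsolution}. Since sup-convolution preserves the viscosity subsolution property (up to an error governed by the modulus of continuity of $H$ in the space variable, by (\textbf{H1})), plugging $u^\varepsilon$ into \eqref{cons31} with $\|u^\varepsilon_{xx}\|_\infty\le C/\varepsilon$ gives, at interior nodes $x\in J_i^*$,
\[
u^\varepsilon(t_{n+1},x) \leq S[\widehat{u^\varepsilon}(t_n,\cdot)](x) + C\,\Delta t\,\frac{\mathcal E(\Delta t,\Delta x)}{\varepsilon}.
\]
At the junction node $x=0$ the same conclusion must be drawn from \eqref{cons3}; this is handled by invoking the equivalent formulation of Theorem \ref{th:1} and Definition \ref{def.cons}(ii), which only requires test functions of the separated form $\psi(t)+\phi^0(x)$ with the prescribed slopes $p_i^A$. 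Comparing $u^\varepsilon$ with the scheme solution $w$ then proceeds by induction on $n$, exactly as in the proof of Theorem \ref{teo:bounds1}: using monotonicity (Proposition \ref{monotonicity}) and the fact that $S$ commutes with the addition of constants, if $m_n := \sup_{x\in J^{\Delta x}}(u^\varepsilon(t_n,x) - w(t_n,x))$ then $m_{n+1}\le m_n + C\,\Delta t\,\mathcal E/\varepsilon$, and stability (Proposition \ref{stability}) guarantees the discrete problem is well posed so that the comparison is legitimate.

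Summing over the $N_T=\lfloor T/\Delta t\rfloor$ time steps, so that $N_T\Delta t\le T$, and passing from $u^\varepsilon$ back to $u$ via $u\le u^\varepsilon\le u+C\varepsilon$ (which also produces the initial discrepancy $\sup_x|u_0(x)-w(0,x)|$ in $m_0$), I obtain
\[
\sup_{(t_n,x)\in\mathcal G^\Delta}\bigl(u(t_n,x) - w(t_n,x)\bigr) \leq C T\,\frac{\mathcal E(\Delta t,\Delta x)}{\varepsilon} + C\varepsilon + \sup_{x\in J^{\Delta x}}|u_0(x)-w(0,x)|.
\]
Balancing the two error contributions by the choice $\varepsilon=\sqrt{\Delta t}$ yields precisely the right-hand side of \eqref{esterr}. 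The reverse inequality for $w-u$ is obtained identically, replacing the sup-convolution by an inf-convolution and using that $u$ is a super-solution.

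The step I expect to be the main obstacle is the treatment of the junction node $x=0$. Away from the origin the argument is the standard one for SL schemes on a line; but at the junction the consistency estimate is available only on the restricted class of test functions adapted to the flux limiter, while the regularized function $u^\varepsilon$ carries an unavoidable kink there. Reconciling the geodesic sup-convolution with the flux-limited structure—so that $u^\varepsilon$ genuinely acts as an approximate subsolution at $0$ and the discrete comparison propagates through the junction—is exactly where the interface/regional-control techniques of \cite{barles2013bellman,barles2016flux} and the equivalent definition of \cite{imbert2013flux} are needed.
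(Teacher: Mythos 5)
Your route (sup-/inf-convolution of $u$, insertion of the regularized function into the consistency estimate, discrete comparison by induction) is genuinely different from the paper's, which never regularizes $u$: the paper runs a Crandall--Lions doubling-of-variables argument on $\psi(t,s,x)=u(t,x)-w_\#(s,x)-\frac{(t-s)^2}{2\eta}-\beta|x|^2-\sigma t$, extracts the sub-solution inequality for $u$ and a discrete super-solution inequality for $w_\#$ at the maximum point (using monotonicity, commutation with constants, and the consistency estimate applied to the \emph{smooth penalization} $\varphi$, not to $u$), subtracts them to bound $\sigma$, and then chooses $\varepsilon=\eta=\sqrt{\Delta t}$. Away from the junction your induction $m_{n+1}\le m_n+C\Delta t\,\mathcal E/\varepsilon$ is sound in principle and would give the same rate, modulo the standard caveat that $u^\varepsilon$ is only semiconvex, not $C^2$, so Proposition~\ref{consisrates} cannot be invoked verbatim and one must argue through one-sided second-order bounds or an additional mollification.

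The genuine gap is exactly the point you flag and then defer: the junction. Your induction step must hold at \emph{every} grid node, and for nodes $x$ with $|x|\le\mu\Delta t$ the operator $S$ compares against $F_A$ (or a mixture of $F_A$ and $H_i$ along a crossing characteristic), not against $H_i$. The consistency statement there (Definition~\ref{def.cons}(ii), Theorem~\ref{th:1}(ii)) is only available for test functions whose slopes at $0$ are the prescribed $p_i^A$, and the geodesic sup-convolution $u^\varepsilon$ has no reason to satisfy this; worse, it is not even known to remain a flux-limited subsolution at $0$ (sup-convolution across a junction does not obviously preserve the $F_A$-condition). Saying that "the techniques of \cite{barles2013bellman,barles2016flux} are needed" is not a proof: the whole difficulty of the theorem is concentrated there. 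The paper resolves it concretely in its Case~2 by introducing the linear corrector $h(x)=\lambda_i x$ with $H_i(\lambda_i)=H_i^+(\lambda_i)=\bar A$ for a carefully chosen level $\bar A$ squeezed between the two viscosity inequalities (inequality \eqref{Ahyp}), which forces the slopes seen by the Hamiltonians at the maximum point to lie in the monotone branches $H_i^\pm$ and rules out the bad configurations ($\bar x$, $\bar y$ on different arcs, or $\bar x=0$). An analogous corrector would have to be built into your regularization of $u$ near $0$ for the approximate discrete subsolution inequality to propagate through the junction; without it, the step
\[
u^\varepsilon(t_{n+1},x)\le S[\widehat{u^\varepsilon}(t_n,\cdot)](x)+C\,\Delta t\,\frac{\mathcal E(\Delta t,\Delta x)}{\varepsilon}
\]
is unjustified precisely on the band of nodes whose discrete trajectories reach $0$, and the induction breaks there.
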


\begin{corollary}
In the specific case of the scheme \eqref{eq:slscheme}, if we assume moreover \eqref{CFLg}, we have
\begin{equation}
 \sup_{(t,x )\in  \mathcal G^{\Delta }}|u(t,x)-w(t,x)| \leq C \left( \sqrt{\Delta t}+\frac{\Delta x^2}{\sqrt{\Delta t^3}}\right).
\end{equation}
\end{corollary}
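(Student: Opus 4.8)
The plan is to deduce the Corollary directly from Theorem \ref{teo:bounds2} by verifying that the semi-Lagrangian scheme \eqref{eq:slscheme} meets all three hypotheses of that theorem, and then inserting the scheme's explicit consistency error into the abstract bound \eqref{esterr}. The monotonicity hypothesis is exactly Proposition \ref{monotonicity}, and the stability hypothesis is exactly Proposition \ref{stability}; both hold unconditionally. The remaining hypothesis — a consistency error estimate — is furnished by Proposition \ref{consisrates}, but only under the CFL condition \eqref{CFLg}. This is precisely the extra assumption imposed in the Corollary, and it explains why \eqref{CFLg} is required here whereas the (error-free) convergence theorem needed no such condition.

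Next I would identify the consistency error $\E(\Dt,\Dx)$ entering \eqref{esterr}. Proposition \ref{consisrates} provides, both at interior points \eqref{cons31} and at the junction \eqref{cons3}, a bound of the common form $K\|\varphi_{xx}\|_\infty\big(\tfrac{\Dx^2}{\Dt}+\Dt\big)$. Since the two cases share the same right-hand side, the scheme admits the uniform consistency error
\[
\E(\Dt,\Dx)=\frac{\Dx^2}{\Dt}+\Dt,
\]
the prefactor $K\|\varphi_{xx}\|_\infty$ being absorbed into the constant $C$ of \eqref{esterr}. I would also note that the initial error term in \eqref{esterr} vanishes: the scheme is initialized by $w^0=\{u_0(x)\}$, so $w(0,x)=u_0(x)$ for every $x\in J^{\Dx}$ and hence $\sup_{x\in J^{\Dx}}|u_0(x)-w(0,x)|=0$.

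Finally I would substitute and simplify. With the $\E$ above,
\[
\frac{\E(\Dt,\Dx)}{\sqrt{\Dt}}+\sqrt{\Dt}
=\frac{1}{\sqrt{\Dt}}\Big(\frac{\Dx^2}{\Dt}+\Dt\Big)+\sqrt{\Dt}
=\frac{\Dx^2}{\sqrt{\Dt^3}}+2\sqrt{\Dt},
\]
so that \eqref{esterr} reduces to $\sup|u-w|\le CT\big(\tfrac{\Dx^2}{\sqrt{\Dt^3}}+2\sqrt{\Dt}\big)$; absorbing $T$ and the numerical factor into $C$ yields exactly the claimed bound $C\big(\sqrt{\Dt}+\tfrac{\Dx^2}{\sqrt{\Dt^3}}\big)$. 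The only step requiring genuine care is the bookkeeping of the previous paragraph: one must check that the $\|\varphi_{xx}\|_\infty$ factor from Proposition \ref{consisrates} is truly controlled by the constant produced inside the proof of Theorem \ref{teo:bounds2} (where the doubling-of-variables test functions have second derivatives bounded in terms of the optimized doubling parameter), rather than degrading the rate; once this is granted, the remainder is a direct substitution.
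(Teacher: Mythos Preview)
Your proposal is correct and matches the paper's approach: the Corollary has no separate proof in the paper and is meant as a direct substitution of the consistency error from Proposition~\ref{consisrates} into the abstract bound~\eqref{esterr} of Theorem~\ref{teo:bounds2}, exactly as you describe. Your caveat about the $\|\varphi_{xx}\|_\infty$ factor is well taken and is indeed handled inside the proof of Theorem~\ref{teo:bounds2}, where the doubling-of-variables test functions have second derivatives of order $1/\varepsilon$, so that the consistency error enters as $\E(\Delta t,\Delta x)/\varepsilon$ before optimizing $\varepsilon=\sqrt{\Delta t}$.
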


\begin{proof}
As standard in this kind of proof, we only prove that
\begin{equation}\label{esterr1}
u(t,x)-w(t,x) \leq C  \left(\frac{\E(\Delta t,\Delta x)}{\sqrt{\Delta t}}+\sqrt{\Delta t}\right)+ \sup_{x\in J^{\Dx}}|u_0(x)-w(0,x)|\quad \hbox{in } \mathcal G^{\Delta },
\end{equation}
since the reverse inequality is obtained with small modifications. Assume that $T\le 1$ (the case $T\ge 1$ is obtained by induction). 

For $i\in \{1,\dots,N\}$ and $j\in \mathbb N$, wet set $x_j^i=j\Delta x e_i$. We then define the extension in the continuous space of $w$ by 
$$w_\#(t_n,x)=\I[\hat w(t_n,\cdot)](x).$$
Firstly, we assume that 
$$u_0(x_j^i)\ge w_\#(0,x_j^i)\quad{\rm for\;  all\;}i\in\{0,\dots,N\}{\rm \; and\;}j\in \mathbb N.$$
We define
$$0\leq\mu_0:=\sup_{x\in J}\{|u_0(x)- w_\#(0,x)|\},$$
and we assume that $\mu_0\le K$. For every $\beta, \eta \in (0,1)$ and $\sigma>0$,  we define the auxiliary function, $\hbox{ for }(t,s,x)\in[0,T)\times \{t_n: n=0,\dots, N_T  \}\times J$
$$\psi(t,s,x):=u(t,x)-w_\#(s,x)- \frac{(t-s)^2}{2\eta}-\beta |x|^2-\sigma t, \\ 
$$
Using Proposition \ref{stability}, the inequality $|u(x,t)-u_0(x)| \leq C_T$ (which holds for the continuous solution, see Theorem 2.14 in \cite{imbert2013flux}),  we deduce that  $\psi(t,s,x)\to -\infty$ as $|x|\to +\infty$ and then the function $\psi$ achieves its maximum at some point $(\oot_\beta,\os_\beta,\ox_\beta)$.
In particular, we have
$$\psi(\oot_\beta,\os_\beta,\ox_\beta)\ge \psi(0,0,0)=u_0(0)-w_\#(0,0)\ge 0.$$
We denote by $K$ several positive constants only depending on the Lipschitz constants of $u$.

\noindent{\bf Case 1:} $\ox_\beta \in J_i\setminus \{0\}$.\newline
In this case, we duplicate the space variable by considering, for $\e\in(0,1)$,
\begin{align*}
\psi_1(t,s,x,y)=&u(t,x)-w_\#(s,y)- \frac{(t-s)^2}{2\eta}-\frac{d(x,y)^2}{2\e}-\frac \beta 2( |x|^2+|y|^2) - \sigma t\\
&-\frac \beta 2|x-\ox_\beta|^2-\frac\beta 2 | y-\bar x_\beta|^2-\frac \beta 2|t-\oot_\beta|^2-\frac \beta 2|s-\os_\beta|^2,\\
&\hbox{ for }(t,s,x,y)\in[0,T)\times \{t_n: n=0,\dots, N_T  \}\times J\times J.
\end{align*}
Using Proposition \ref{stability} again, the inequality $|u(x,t)-u_0(x)| \leq C_T$,  and the fact that $u_0$ is Lipschitz continuous, we deduce that  $\psi_1(t,s,x,y)\to -\infty$ as $|x|,|y|\to +\infty$ and then the function $\psi_1$ achieves its maximum at some point $(\oot,\os,\ox,\oy)$, i.e.
\begin{equation*}
  \psi_1(\oot,\os,\ox,\oy)\geq \psi_1(t,s,x,y)\quad \hbox{ for all } (t,x),(s,y)\in[0,T)\times J.
\end{equation*}
It is also easy to show that $(\oot,\os,\ox,\oy)\to(\oot_\beta,\os_\beta,\ox_\beta,\ox_\beta)$ as $\e$ goes to zero and so $\ox,\oy\in J_i\setminus\{0\}$, for $\eps$ small enough. 

\noindent{\bf Step 1. (Basic estimates). } The maximum point of $\psi_1$ satisfy the following estimates:

\be\label{stima2} d(\ox,\oy)\le K \varepsilon, \quad |\oot-\os|\le K \eta. \ee
\be\label{stima1}\beta\left(|\ox|^2+|\bar y|^2\right)\leq {K}, \quad \beta \left(|\bar x-\bar x_\beta|^2+|\bar y-\bar x_\beta|^2+|\bar t-\bar t_\beta|^2+|\bar s-\bar s_\beta|^2\right){\leq  K }\ee

From 
$$\psi_1(\oot,\os,\ox,\oy)\ge \psi_1(\oot_\beta,\os_\beta,\ox_\beta,\ox_\beta) {=} \psi(\oot_\beta,\os_\beta,\ox_\beta)\ge 0,$$
we get, (using $0\geq -(\bar t-\bar s)^2/2\eta-d(\bar x,\bar y)^2/2\varepsilon-\sigma \bar t$)
\begin{align}\label{eq:105}
&\frac  \beta 2 (|\bar x|^2+|\bar y|^2)+\frac \beta 2 \left(|\bar x-\bar x_\beta|^2+|\bar y-\bar x_\beta|^2+|\bar t-\bar t_\beta|^2+|\bar s-\bar s_\beta|^2\right)\nonumber\\
\le& u(\bar t,\bar x)-w_\#(\bar s,\bar y)\le u_0(\bar x)-w_\#(0,\bar y)+K\bar t+K\bar s\le K(1+|\bar x|+|\bar y|)
\end{align}
where we have used Proposition  \ref{stability} (extended to all the points of $J$ thanks to the monotonicity of the interpolation operator Lemma \ref{inter}) and  \cite[Theorem 2.14]{imbert2013flux} for the second inequality and the fact that $T\le 1$ for the last one.
Using Young's inequality, (i.e. the fact that $|\bar x|\leq 1/\beta + \beta/4 |\bar x|^2$ since $(\beta/2|\bar x|-1)^2\geq 0$) \eqref{eq:105} implies in particular that
$$\frac  \beta 2 (|\bar x|^2+|\bar y|^2)\le K\left(1+\frac 2 \beta +\frac \beta 4 (|\bar x|^2+|\bar y|^2)\right).$$
Multiplying by $\beta$ and using the fact that $\beta \le 1$, we finally deduce that
$$\beta|\bar x|,\; \beta |\bar y|\le K.$$
Using \eqref{eq:105} again,  the equation above implies that
\begin{equation*}
\beta  \left(|\bar x-\bar x_\beta|^2+|\bar y-\bar x_\beta|^2+|\bar t-\bar t_\beta|^2+|\bar s-\bar s_\beta|^2\right)\le  K\left(1+\frac 1 \beta\right)
\end{equation*}
and so
$$\beta \left(|\bar x-\bar x_\beta|+|\bar y-\bar x_\beta|+|\bar t-\bar t_\beta|+|\bar s-\bar s_\beta|\right)\le K.$$

From $\psi_1(\oot,\os,\ox,\oy)\geq \psi_1(\oot,\os,\oy,\oy)$ 
we get
\begin{multline} \label{stima5b}
\frac{d(\ox,\oy)^2}{2\varepsilon}\leq u(\oot,\ox)-u(\oot,\oy)+\frac \beta 2(|\bar y|^2- |\ox|^2)+\frac \beta 2(|\bar y- \bar x_\beta|^2-|\bar x-\bar x_\beta|^2)\\
\leq K d(\bar x,\bar y)+\frac \beta 2 (|\bar x|+|\bar y|)d(\bar x,\bar y) + \frac \beta 2(|\bar x-\bar x_\beta|+|\bar y-\bar x_\beta|)d(\bar x,\bar y)
\le K d(\bar x,\bar y)
\end{multline}
which implies the first estimate of \eqref{stima2}. The second bound in \eqref{stima2} is deduced from $\psi(\oot,\os,\ox,\oy)\geq \psi(\os,\os,\ox,\oy)$ in the same way.

If we include the estimate
$$u(\bar t,\bar x)-w_\#(\bar s,\bar y)\le u_0(\bar x)+K\bar t-w_\#(0,\bar y){+ K\bar s}\le K(\mu_0+d(\bar x,\bar y)+{1})\le {K}$$
in the first part of \eqref{eq:105}, we finally deduce \eqref{stima1}.
\bigskip

{\bf{Step 2.} (Viscosity inequalities). }
We claim  that for $\sigma$ large enough, the supremum of $\psi_1$ is achieved for $\oot=0$ or $\os=0$. 
We prove the assertion by contradiction. Suppose $\oot>0$ and $\os>0$.

Using the fact that  $(t,x)\to\psi_1(t,\bar s, x,\bar y)$ has a maximum in $(\bar x,\bar t)$ and that $u$ is a sub solution, we get
\begin{equation}\label{sub1}
 \frac{\bar t-\bar s}{\eta}+\sigma+\beta(\bar t-\bar t_\beta)+ H_i\left(\frac{d(\bar x,\bar y)}{\varepsilon}+\beta |\bar x|+\beta(|\bar x-\bar x_\beta|)\right)\leq 0.
 \end{equation}
Since $\bar s>0$ we know that 
$\psi_1(\bar t, \bar s, \bar x, \bar y)\geq \psi_1(\bar t, \bar s-\Delta t, \bar x, y)$ for a generic $y$ and, by defining $\varphi(s,y)=-\left(\frac{(\bar t-s)^2}{2\eta}+\frac{d(\bar x,y)^2}{2\varepsilon}+\frac{\beta}{2}|y|^2+\frac{\beta}{2}|y-\bar x_\beta|^2+\frac{\beta}{2}|s-\bar s_\beta|^2\right)$,  it implies that for a generic $y$ holds
$$w_\#(\bar s,\bar y)-\varphi(\bar s,\bar y)\leq w_\#(\bar s-\Delta t, y)-\varphi(\bar s-\Delta t, y).$$
In particular, we have that for any  $ z \in J^{\Dx }$
$$w_\#(\bar s,\bar y)-\varphi(\bar s,\bar y)\leq w(\bar s-\Delta t, z)-\varphi(\bar s-\Delta t, z).$$
By the monotonicity of the scheme
and the fact that the scheme commutes by constant, i.e.  $S[ \hat \varphi+C](z)=S[ \hat \varphi](z)+C$ for any constant $C$, choosing $C=w_\#(\bar s,\bar y)-\varphi(\bar s,\bar y)$   we get for any  $ z \in J^{\Dx }$
$$ w(\bar s,z)=S[ \hat  w(\bar s-\Delta t)]( z)\geq S[\hat  \varphi(\bar s-\Delta t)](z) +C.$$
By the monotonicity of the interpolation operator, this implies 
$$ w_\#(\bar s,\bar y)=\I[\hat  w(\bar s,\cdot)](\bar y)\geq \I[S[\hat  \varphi(\bar s-\Delta t)](\cdot)](\bar y) +w_\#(\bar s,\bar y)-\varphi(\bar s,\bar y),$$
Simplifying by $w_\#(\bar s,\bar y)$, we get
$$-\sum_i \phi_i (\bar y) S[ \hat \varphi(\bar s-\Delta t)](y_i) =-\I[S[\hat  \varphi(\bar s-\Delta t)](\cdot)](\bar y) \geq - \varphi(\bar s,\bar y) ,$$
where $\phi_i$ are the basis functions of the interpolation operator (cf. Lemma \ref{inter}). 
Adding and subtracting  $\I[\hat \varphi(\bar s,\cdot)](\bar y)-\I[\hat \varphi(\bar s- \Delta t,\cdot)](\bar y)$ and dividing by $\Delta t$ , we get
\begin{multline*}
\sum_i \phi_i (\bar y) \left(\frac{\varphi(\bar s-\Delta t, y_i)-S[ \hat \varphi(\bar s-\Delta t)]( y_i) }{\Delta t}+\frac{\varphi(\bar s, y_i)-\varphi(\bar s-\Delta t, y_i)}{\Delta t}
\right) \\
\geq \mathcal{O}\left(\frac{(\Delta x) ^2}{\eps}\right),
\end{multline*}
where we have used the fact that $\varphi_{xx}=\mathcal O(\frac 1\e)$ joint to Lemma \ref{inter}.
We observe that $\frac{\varphi(\bar s, y_i)-\varphi(\bar s-\Delta t,y_i)}{\Delta t}=\varphi_s(\bar s, y_i)+\mathcal O(\Delta t/\eta)$, then, using the consistency result -- Prop \ref{consisrates},  we arrive to
\begin{multline*}
\sum \phi_i (\bar y) \left(-\varphi_s(\bar s, y_i)+H_i(\varphi_x(\bar s-\Delta t, y_i))\right)\\
\geq  \mathcal O\left(\frac{\Delta t}{\eta}+\frac{(\Delta x) ^2}{\eps} \right)+\frac{\E(\Delta t,\Delta x)}{\e}.
\end{multline*}
By the regularity of $\varphi$ and $H$ (Lipschitz continuous) and the interpolation error for Lipschitz function (see Lemma \ref{inter}),  there exists a positive constant $K$ such that
\begin{equation}\label{sub2}
\varphi_s(\bar s, \bar y)+H_i(\varphi_x(\bar s-\Delta t, \bar y))\geq - K\left(\frac{\Delta t}{\eta}+\frac{(\Delta x)^2}{\varepsilon}\right)+\frac{\E(\Delta t,\Delta x)}{\e}.
\end{equation}
We subtract \eqref{sub2} to \eqref{sub1} and expliciting $\varphi$,  obtaining
\begin{multline*}\sigma +\beta(\bar s-\bar s_\beta)+\beta(\bar t-\bar t_\beta)+  H_i\left(\frac{d(\bar x,\bar y)}{\varepsilon}+\beta| \bar x|+\beta(|\bar x-\bar x_\beta|)\right)\\
-H_i\left(\frac{d(\bar x,\bar y)}{\varepsilon}-\beta |\bar y|-\beta(|\bar y-\bar x_\beta|)\right)\leq   K\left(\frac{\Delta t}{\eta}+\frac{(\Delta x)^2}{\varepsilon}\right)+\frac{\E(\Delta t,\Delta x)}{\e}.
\end{multline*}
Then, using that $H_i$ is Lipschitz continuous and the basic estimates of the Step 1, we arrive  to
\begin{equation}\label{eq:3}
 \sigma <
K\sqrt{\beta}+ K\left(\frac{\Delta t}{\eta}+\frac{(\Delta x)^2}{\varepsilon} \right)+\frac{\E(\Delta t,\Delta x)}{\e}=:\sigma^*.
\end{equation}
Therefore, we have that for a $\sigma\ge\sigma^*$ at least one between $\bar t$ and $\bar s$ is equal to zero.
\bigskip

{\bf{Step 3.} (Conclusion). }
If $\overline t=0$ we have
 \begin{align*} \psi_1(0,\bar s,\bar x,\bar y)\leq &u_0(\overline x)-w_\#(\overline s, \overline  y)\leq u_0(\overline x)-u_0(\overline y)+C\overline s +\mu_0\\
\leq &K\e+K\eta+\mu_0.
\end{align*}
A similar argument applies if $\bar s=0$.

Taking $\sigma=\sigma^*$, we get
\begin{align*}
&u(t,x)-w_{\#}(t,x)-\frac\beta2\left(|x|^2+|y|^2+|x-\bar x_\beta|^2+|y-\bar x_\beta|^2+|t-\bar t_\beta|^2+|s-\bar s_\beta|^2\right)\\
&-\left(K\sqrt{\beta}+ K\left(\frac{\Delta t}{\eta}+\frac{(\Delta x)^2}{\varepsilon} \right)+\frac{\E(\Delta t,\Delta x)}{\e}\right)T\\
\le&K\e+K\eta+\mu_0.
\end{align*}
Sending $\beta\to0$ and choosing $\e=\eta=\sqrt {\Dt}$, we get the desired estimate.

\bigskip

\noindent{\bf Case 2: } $\ox_\beta =0$.\newline
Firstly we observe that assuming 
\be \label{hcont}
\sigma > K\sqrt{\beta}+ K\left(\frac{\E(\Delta t,\Delta x)}{\e}+\frac{\Delta t}{\varepsilon}+\frac{\Delta x}{\eps}\right)
\ee
(which is compatible with $\sigma>\sigma^*$) then, there exists a $\bar A\in \mathbb{R}$ such that 
\begin{equation}\label{Ahyp}
 \frac{\bar s_\beta-\bar t_\beta}{\eta} -K\left(\frac{\E(\Delta t,\Delta x)}{\e}+\frac{\Delta t}{\varepsilon}\right)+K\sqrt{\beta}>\bar A > \frac{\bar s_\beta-\bar t_\beta}{\eta} -\sigma+K\sqrt{\beta}.
\end{equation} 
Using the fact that  $(t,x)\to\psi(t,\bar s_\beta, x)$ has a maximum in $(\bar t_\beta,\bar x_\beta)$ and that $u$ is a sub solution, we get
\begin{equation}\label{sub3}
 \frac{\bar t_\beta-\bar s_\beta}{\eta} +\sigma+ F_A\left(\partial_x\varphi(\bar t_\beta,0)\right)\leq 0,
 \end{equation}
 with $\varphi(t,x)=w_\#(\bar s_\beta,x)+\frac{(t-\bar s_\beta)^2}{2\eta}+\beta |x|^2+\sigma t$
and from \eqref{sub3} and \eqref{Ahyp}, 
\begin{equation}\label{ght}
\bar A>  F_A\left(\partial_x\varphi(\bar t_\beta , 0)\right).
\end{equation} 
We use \eqref{ght} the definition of $F_A$, and the coercivity of the Hamiltonians to obtain the existence of  values $\lambda_i$ such that 
\begin{equation}\label{Hbound2}
H_i(\lambda_i)=H_i^+(\lambda_i)=\bar A
\end{equation} 
(cf. Fig. \ref{figH}) that will be useful in the following of the proof.

\begin{figure}[h!]
\begin{center}
\includegraphics[height=6.5cm]{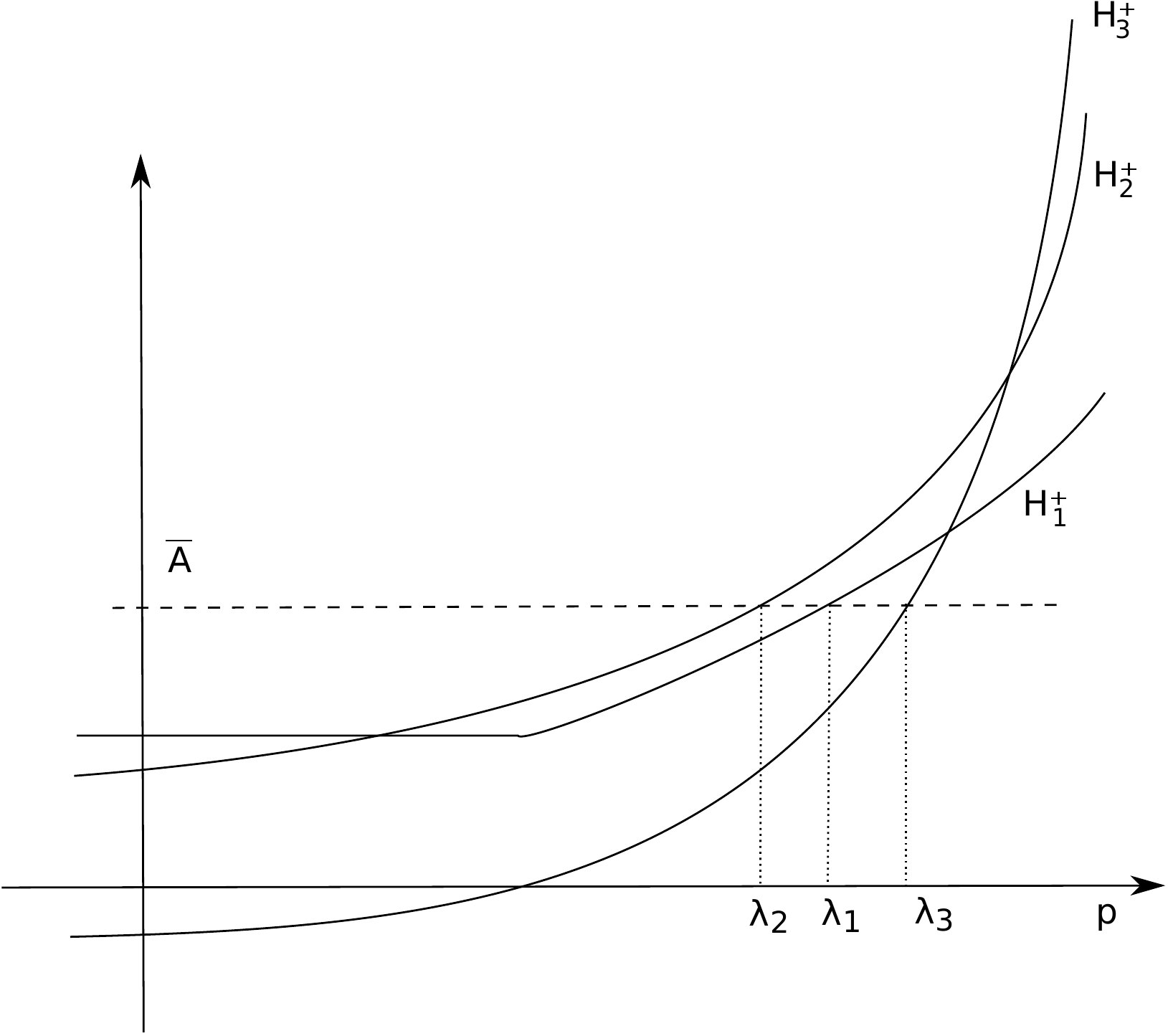} 
\caption{An example of $H^+_i$ functions.} \label{figH}
\end{center}
\end{figure}

Now we pass to identify the right test function to treat this case. We duplicate the space variable differently than in Case 1. We consider, for $\e\in(0,1)$,
\begin{align*}
\psi_2(t,s,x,y)&=u(t,x)-w_\#(s,y)- \frac{(t-s)^2}{2\eta}-\frac{d(x,y)^2}{2\e}- \frac \beta 2( |x|^2+|y|^2)\\
& - \sigma t-\left(h(x)+h(y)\right)-\frac \beta 2(t-\oot_\beta)^2-\frac \beta 2(s-\os_\beta)^2,\\
&\hbox{ for }(t,s,x,y)\in[0,T)\times \{t_n: n=0,\dots, N_T  \}\times J\times J.
\end{align*}
where $h(x)=\lambda_i x$ if $x\in J_i$ and the $\lambda_i$ are defined in \eqref{Hbound2}.

We denote by $(\oot,\os,\ox,\oy)$ the maximum point of $\Psi_2$ (we keep the same notation than the previous case, but they are possibly different points). We remark $(\oot,\os,\ox,\oy)\to(\oot_\beta,\os_\beta,\ox_\beta,\ox_\beta)$ as $\e\to 0$. 
\bigskip

{\bf{Step 2.} (Viscosity inequalities). }
We claim  that for $\sigma$ large enough, the supremum of $\psi_1$ is achieved for $\oot=0$ or $\os=0$. 
We prove the assertion by contradiction. Suppose $\oot>0$ and $\os>0$.
We can have different scenarios: if $\ox$ and $\oy$ belong to the same arc (junction point excluded) the case is included in Case 1. If instead   $\bar x\in J_i\setminus\{0\}$, $\bar y\in J_j$ ($\bar x$ and $\bar y$ belong to different arcs), we can repeat the same argument to obtain  \eqref{sub1} with the test function $\psi_2$. We have:
$$\frac{\bar t- \bar s}{\eta}+\beta(\bar t-\bar t_\beta)+\sigma + H_i\left(\frac{d(\bar x,\bar y)}{\varepsilon}+2\beta |\bar x|+\lambda_i\right)\leq 0.$$
Observing that the argument inside the Hamiltonian is bigger than $\lambda_i$, we use \eqref{Hbound2}  arriving to 
$$0\geq\frac{\bar t- \bar s}{\eta}+\beta(\bar t-\bar t_\beta)+\sigma + H_i^+(\lambda_i)= \frac{\bar t_\beta- \bar s_\beta}{\eta}+\sigma + \bar A +K\sqrt{\beta},$$
which contradicts \eqref{Ahyp}. Then, this case cannot appear.

We pass to the last case to consider: $\bar x=0$, $\bar y\in J_i\setminus\{0\}$. First of all, we notice that the \emph{basic estimates} \eqref{stima2}-\eqref{stima1} are still valid for $(\oot,\os,\ox,\oy)$ maximum point of $\psi_2$ since the added terms $h(x)$, $h(y)$ are easily included in the other linear elements of the estimates.

In this case, the difficulty comes comparing two Hamiltonians evaluated, respectively, on the junction point and on one arc. 
 Using the subsolution property with the test function $\psi_2$, we have as first equation:
\begin{equation}\label{eq1}
\frac{\bar t-\bar s}{\eta}+\beta(\bar t-\bar t_\beta)+\sigma + F_A\left(\frac{-|\bar y|}{\varepsilon}+\lambda_i\right)\leq  0,
\end{equation}
where 
$$F_A\left(\frac{-|\bar y|}{\varepsilon}+\lambda_i\right)=\max\left(A,\, \max_j\left(H_j^-\left(\frac{-|\bar y|}{\varepsilon}+\lambda_i\right)\right)\right).$$
From the definition of $F_A$ it is also valid 
\begin{equation}\label{eq2}
\frac{\bar t-\bar s}{\eta}+\beta(\bar t-\bar t_\beta)+\sigma + H^-_i\left(\frac{-|\bar y|}{\varepsilon}+\lambda_i\right)\leq  0.
\end{equation}
Since $\bar y\in J_i\setminus\{0\}$ with the same argument to obtain \eqref{sub2} (but for the test function $\psi_2$) and using the consistency result, we have
$$\frac{\bar t-\bar s}{\eta}+\beta(\bar s-\bar s_\beta)+H_i\left(\frac{-|\bar y|}{\varepsilon}-2\beta \bar y+\lambda_i\right)\geq K\left(\frac{\E(\Delta t,\Delta x)}{\e}+\frac{\Delta t}{\eta}+\frac{(\Delta x)^2}{\eps}\right).$$
 Now recalling that  $H^+(\lambda_i)=\bar A$,
 \begin{align*}
&\frac{\bar t-\bar s}{\eta}+\beta(\bar s-\bar s_\beta) +H^+_i\left(\frac{-|\bar y|}{\varepsilon}-2\beta \bar y+\lambda_i\right)-K\left(\frac{\E(\Delta t,\Delta x)}{\e}+\frac{\Delta t}{\eta}+\frac{(\Delta x)^2}{\eps}\right)\\
 \leq&  \frac{\bar t-\bar s}{\eta}+\beta(\bar s-\bar s_\beta)+H^+_i\left(\lambda_i\right)-K\left(\frac{\E(\Delta t,\Delta x)}{\e}+\frac{\Delta t}{\eta}+\frac{(\Delta x)^2}{\eps}\right)\\
 \leq&\frac{\bar t_\beta-\bar s_\beta}{\eta}+K\sqrt{\beta}+\bar A-K\left(\frac{\E(\Delta t,\Delta x)}{\e}+\frac{\Delta t}{\eta}+\frac{(\Delta x)^2}{\eps}\right)\\
 &<0
 \end{align*}
for $\e$ small enough,  where we used $\beta(\bar s-\bar s_\beta)\leq K\sqrt \beta$ (basic estimates). We can claim that 
 \begin{equation}\label{eq3}
 \frac{\bar t-\bar s}{\eta}+\beta(\bar s-\bar s_\beta)+H_i^-\left(\frac{-|\bar y|}{\varepsilon}-2\beta \bar y+\lambda_i\right)\geq K\left(\frac{\E(\Delta t,\Delta x)}{\e}+\frac{\Delta t}{\eta}+\frac{(\Delta x)^2}{\eps}\right).
 \end{equation}
We can finally subtract \eqref{eq3} to \eqref{eq1}, obtaining the desired estimate on $\sigma$
 \begin{equation} \sigma \leq K\sqrt{\beta}+ K\left(\frac{\E(\Delta t,\Delta x)}{\e}+\frac{\Delta t}{\varepsilon}+\frac{\Delta x}{\eps}\right):=\sigma^*.
\end{equation}
In this case we obtain a contradiction with \eqref{hcont}. Then, We have that assuming $\sigma>\sigma^*$ at least one between $\oot$ and $\os$ is equal to zero.

\medskip

{\bf Step 3. (Conclusion).}
We obtain the same estimate as in Case 1.

\bigskip

It just remains to prove the general case (for which we do not assume that $u^0(x)\ge w_\#(0,x),\forall x\in J^\Dx$). Remarking that $\bar u=u+\mu_1$ with $\mu_1=\sup_{x\in J^\Dx}(w_\#(0,x)-u_0(x))$ is a solution of the same equation of $u$ but satisfying $\bar u(0,x)\ge w_\#(0,x), \forall x\in J^\Dx$, we deduce that $\bar u$ satisfies 
\begin{align*}
&\sup_{(t,x )\in  \mathcal G^{\Delta }}(u(t,x)+\mu_1-w(t,x))\\
 \leq &C \left(\frac{\E(\Delta t,\Delta x)}{\sqrt{\Delta t}}+\sqrt{\Delta t}\right) + \sup_{x\in J^{\Dx}}|u_0(x)+\mu_1-w(0,x)|.
\end{align*}
which implies \eqref{esterr1} and ends the prooof of the Theorem.

\hfill\qed\end{proof}

\section{Application to traffic flows models}\label{Sect:traffic}
Traffic flow models aim to understand the motion of agents in structures such as highways or roads and to develop optimal networks avoiding congestions. Typically in these models, differently from \emph{crowd motion models} \cite{cristiani2011multiscale}, the dynamic is described on a finite number of one-dimensional pathways (e.g., travel lanes) connected by junction  points.
In a microscopic model, the behavior of every single agent is considered, whereas a \emph{macroscopic} model considers the density of the agents. The  relation between  micro and  macroscopic scale is still an active subject of research.

In the literature, most of the macroscopic models describe the evolution of the density of cars $\rho:J\times [0,T]\rightarrow [0,\rho_{\max}]$  through a system of conservation laws, see for instance  \cite{garavello2016models,camilli2016discrete}. Recently, a different framework based on  HJ has been proposed. The two models are connecting by duality properties. Following \cite{imbert2013hamilton}, we introduce  a \emph{cumulative density function}
\begin{equation}\label{cum}
u(x,t)=g(t)+\frac{1}{\gamma_i}\int_0^x \rho(y,t)dy, \quad \hbox{ for }x\in J_i
\end{equation}
where $\gamma_i$ are constant parameters modeling the incoming/outgoing fluxes for each arc $J_i$ at the junction points. The function $g(t)$ has to be determined with respect to the conservation of the total density and the initial conditions.

In the same work the authors show that considering $\rho$ solution of a conservation law with flux $f(\rho)$, $u(x,t)$ defined in \eqref{cum} is formally the solution of \eqref{eq:hjnet} with the Hamiltonian as

\begin{equation}
H_i(p)=\left\{
\begin{array}{ll}
-\frac{1}{\gamma_i}f(\gamma_i p) & \hbox{ for $p\geq 0$ ('incoming' edges)},\\
-\frac{1}{\gamma_i}f(-\gamma_i p) & \hbox{ for $p<0$ ('outgoing' ones) }.
\end{array}\right.
\end{equation}

In \cite{forcadel2015homogenization}, a similar macroscopic model of the form \eqref{eq:hjnet} is derived  from a microscopic model by an  homogenization procedure. The microscopic model is based on a  system of ordinary differential equations describing the  flow of each single agent. 
Generally the $f$-functions are often called \emph{fundamental diagram} (cf. for various examples \cite{treiber2013traffic}), and they establish a relation between speed and density of the agents. 

\subsection{Network framework and boundary conditions}

In realistic situations, traffic flows are defined on finite networks. We briefly extend our framework in the case of networks: we call $\N$ a network composed by $N$ edges $E_i$ isometric to a real interval $[a_i,b_i]$ or, in the unbounded case to $[a_i,\infty)$, and a collection of $M$ points $V_i$, called nodes, such that
$$ \N:=\bigcup_{i=1,...,N}E_i,\quad \V:=\{V_i, \;i=1,...,M\} $$
and for any $ i,j \in\{ 1,...,M\},\,i\neq j,$ if  $E_i\cap E_j\neq  \emptyset$,  then, $E_i\cap E_j=V_l,$ with $ V_l \in  \V$.
Clearly, a junction can be represented by a network $\N$. Some details explaining how to extend the results obtained on a junction to a general network are contained in \cite[Appendix B]{imbert2013flux}, intuitively since the complexity of the dynamic can be described locally on one node of the network, the presence of multiple nodes does not drastically change the study.

Some conditions on in-flow, and out-flow boundaries need to be defined. We call  $\B\subset \V$ the set that contains the \emph{boundary nodes} of the network. We also assume that these nodes are simply connected, i.e. we assume that
$$ \hbox{if } V_i\in \B, \hbox{ there exists one and only one }j\in\{1,...,N\} \hbox{ s.t. }V_i\cap E_j\neq \emptyset. $$
Hence, we define two possible types of boundary conditions on $\B$. Let $\B_N,\,\B_D$ be two sets such that $\B=\B_N\cup\B_D$, $\B_N\cap\B_D=\emptyset$ and let us consider
\begin{equation}\left\{
\begin{array}{ll}\label{BC}
   u_x(t,x)=f_N(t), & \hbox{if }x\in\B_N, \\
   u(t,x)=f_B(t), & \hbox{if }x\in\B_D.
\end{array}\right.
\end{equation}
 The first equation corresponds to Neumann-like boundary conditions and it  model traffic fluxes entering in the network. The second one  is a Dirichlet-like condition, and it represents the cost to enter   from inflow nodes.\\
We consider   the following problem, strictly related to \eqref{eq:hjnet}:
\begin{equation}\label{eq:hjnet2}
\left\{
\begin{array}{ll}
 \partial_t u(t,x)+H_i(u_x(t,x))=0 & \hbox{ in } (0,T) \times E_i\setminus \V,\\
\partial_t u(t,x)+F_A(u_x(t,x))=0 & \hbox{ in }  (0,T) \times \V\setminus\B.
\end{array}
\right.
\end{equation} 
provided with \eqref{BC}.
The theory discussed for a junction can also be extended to \eqref{eq:hjnet2}-\eqref{BC}. Some hints can be found in \cite{imbert2013flux}.

\section{Numerical tests}\label{Sect:tests}

In this section, we develop some tests showing  the convergence and the efficiency of the scheme. Firstly, we consider two simple tests to verify the convergence error estimate proved in Section \ref{sect:bounds} numerically. Then,  we focus on a complex case coming from traffic flows in  a more realistic  scenario. 

\paragraph{{\bf Test 1}}
We consider a basic network composed by two edges connecting the nodes $(-1,0)$ and $(1,0)$ with a junction in $(0,0)$. This case can be  seen as an 1D problem in $\Omega=\Omega_1\cup\Omega_2=[-1,0]\cup[0,1]=[-1,1]$ with a discontinuity on the Hamiltonian at the origin. \\
We consider the following Hamiltonian on $\Omega$: 
\begin{equation}\label{Htest1}
H(x,p)=\left\{
\begin{array}{lll}
\frac{p^2}{2}-\frac{1}{2}, & &x\in \Omega_1\\
\frac{p^2}{2}-1, & &x\in \Omega_2.\\
\end{array}\right.
\end{equation}
This example has been used as a benchmark also in \cite{imbert2015error}.
Using the Legendre transform, we rewrite \eqref{Htest1} as  
\begin{equation}
H(x,p)=\left\{
\begin{array}{lll}
\displaystyle\max_{\alpha\in\R}\left(\alpha_1 p-\frac{\alpha_1^2}{2}\right)-\frac{1}{2}, & &x\in \Omega_1\\
\displaystyle\max_{\alpha\in\R}\left(\alpha_2 p-\frac{\alpha_2^2}{2}\right)-1, & &x\in \Omega_2,\\
\end{array}\right.
\end{equation}
We chose the initial condition
\begin{equation}
u_0(x)=\sin(\pi|x|),
\end{equation}
and we impose  Dirichlet boundary conditions $u(t,-1)=u(t,1)=0$.
\begin{figure}[t!]
\begin{center}
\includegraphics[height=5.5cm]{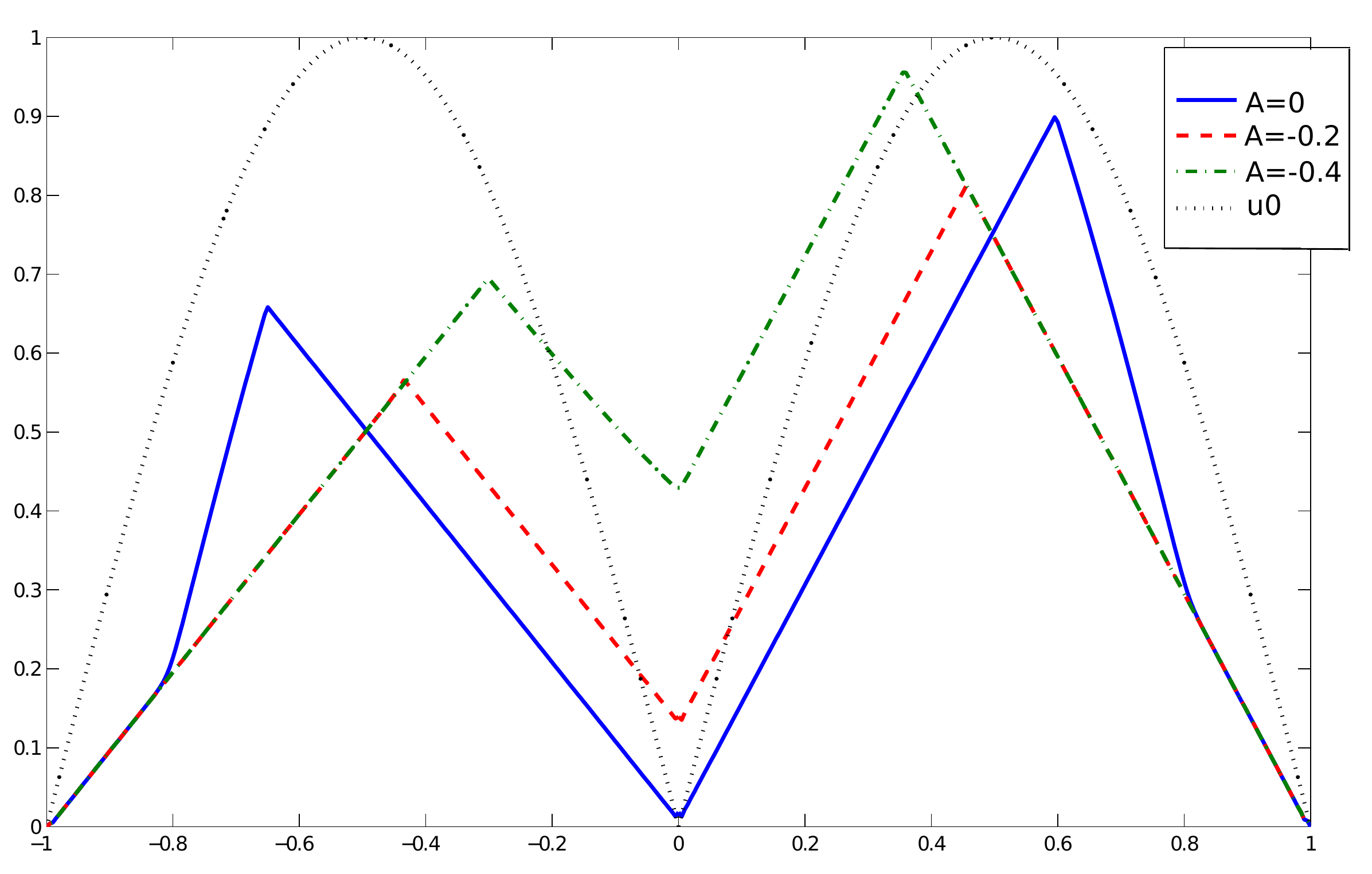} 
\includegraphics[height=5.5cm]{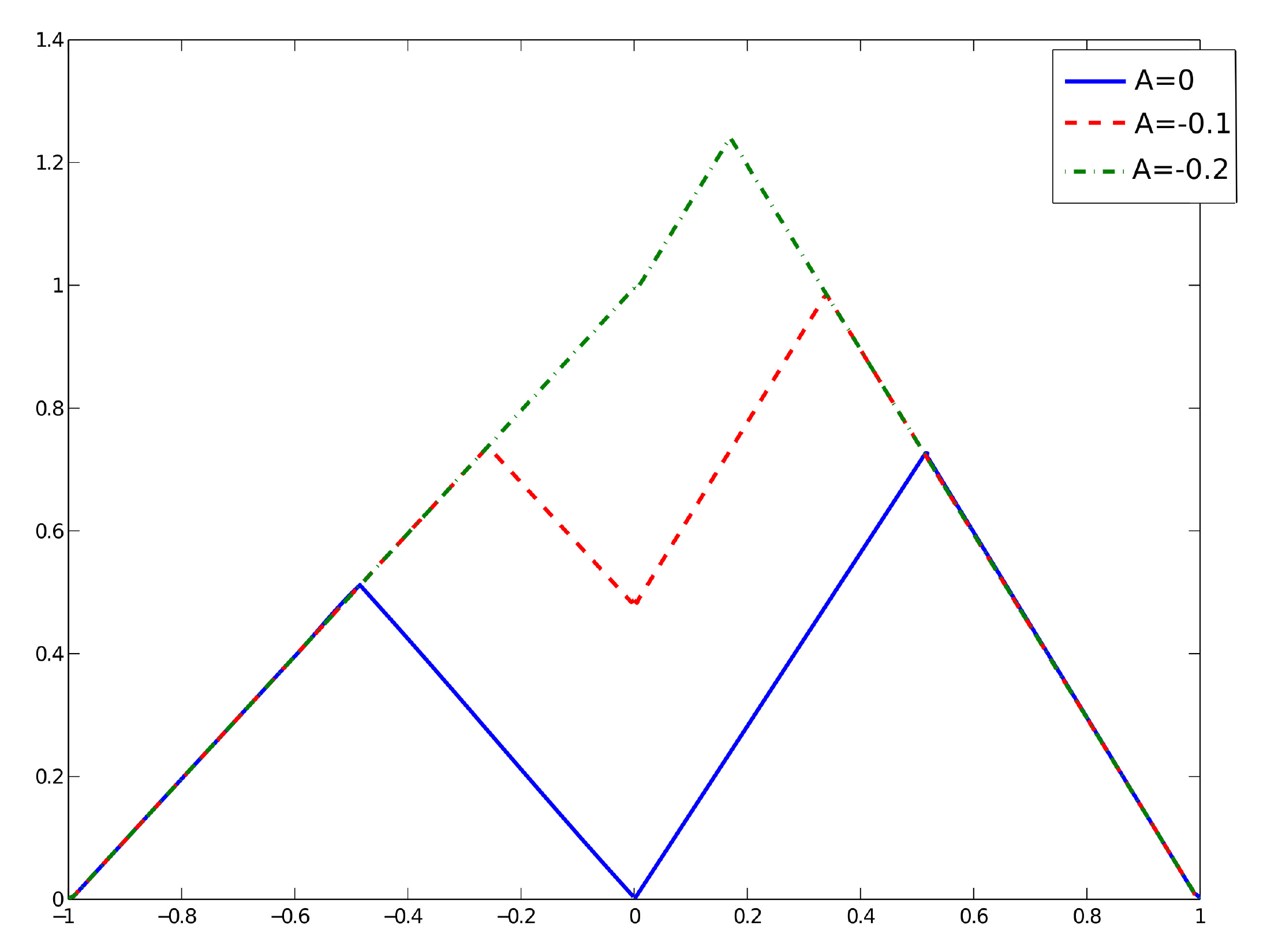}
\caption{Initial solution and numerical solution at time $t=0.2$ (above) with various choices of the parameter $A$. Solution (stationary) at time $t=2$ (below) with various choices of the parameter $A$.} \label{figA}
\end{center}
\end{figure}

In Fig. \ref{figA}, we show the numerical solution at time $t=0.1$ and $t=2$ for different choices of the parameter $A$ on the junction point $x=(0,0)$. We can observe as the asymmetry of the Hamiltonian with respect to the origin induces an asymmetric behavior of the solution.
 We can also observe how the choice of parameter $A$ influences \emph {globally} the value function of the problem. In fact, when $A=0$ the optimal control in $x=0$ is simply $\alpha_0=0$ that corresponds to a zero cost, and since $u_0(0)=0$, the solution $u(t, 0)=0$ for each $t \in [0, T].$
 In the case of $A<0$ the situation is different: the control $ \alpha_0=0$  \emph{does not correspond} to a null cost.
A trajectory, which remains on the junction point, entails a cost. 
 Furthermore, we observe that for values of $|A|$ sufficiently large, the stationary solution  does not change anymore. 
 This is due to the fact that remaining in the junction point is no more a convenient choice.

In the case of $A=0$ and $A=-0.2$, we show the convergence rates. In absence of an analytic exact solution,  we compare the approximated solution $w(T,x)$ with an  approximation $u(T,x)$ obtained on a very fine grid with $\Delta x=10^{-4}$ and $\Delta t=\Delta x$.
We evaluate the error with respect to the uniform discrete norm defined as following
\begin{equation}\label{err}
E_\infty^{\Delta}:=\underset{x \in J^{\Delta x}}{\max}(|w(T,x)- u(T,x)|).
\end{equation}

\begin{figure}[t!]
\begin{center}
\includegraphics[height=5cm]{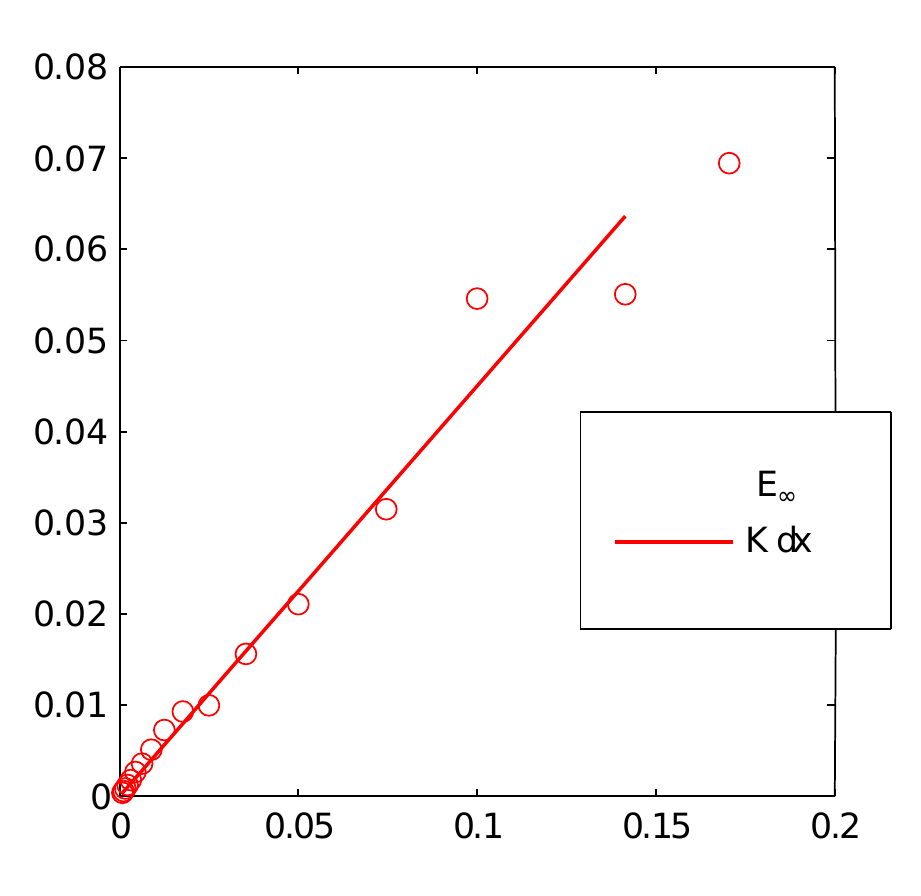}
\includegraphics[height=5cm]{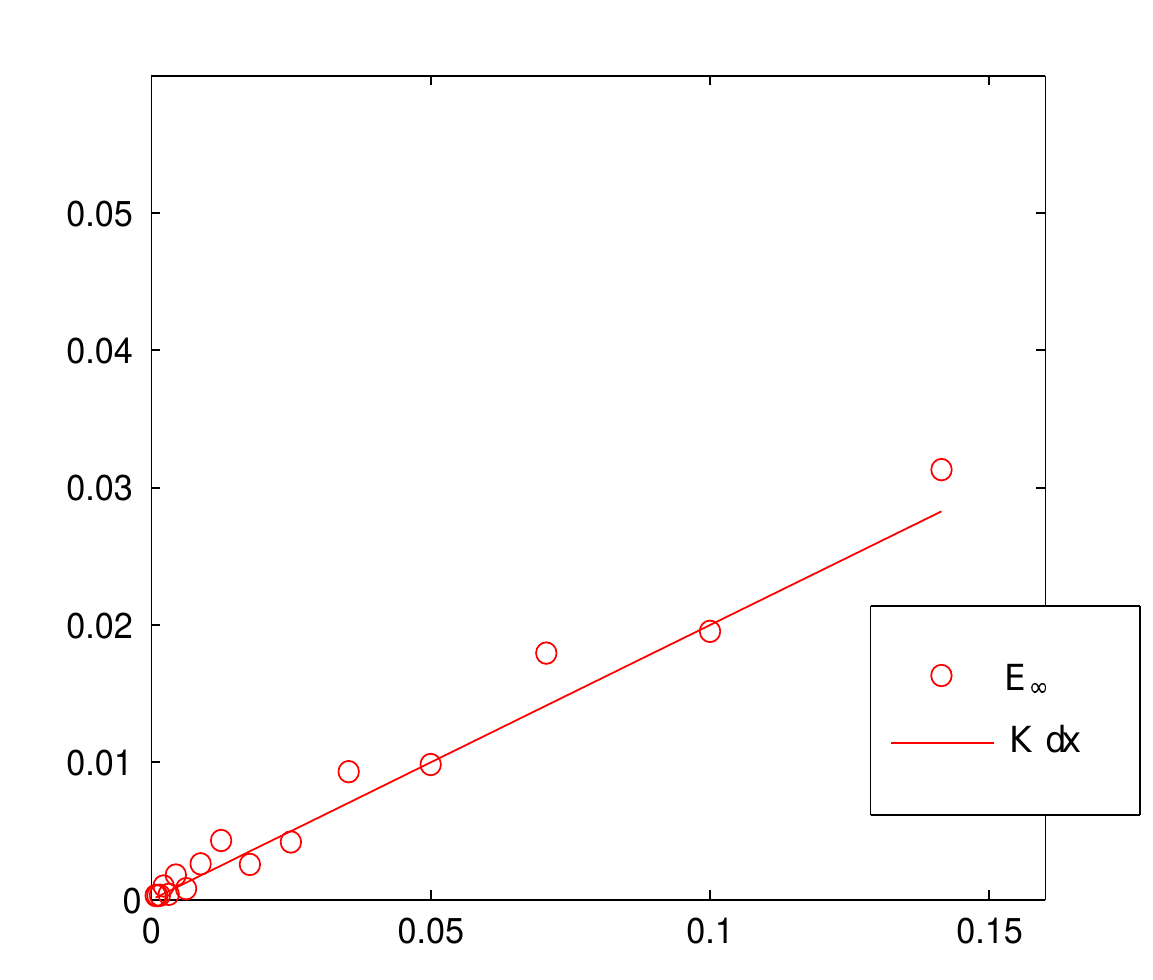}
\caption{Graphic of $E_\infty^{\Delta}$($\circ$) with respect the space step, together with the line $K \Delta x$. Left $A=0$,  with   $K=4.5$, right $A=-0.2$,  with   $K=2$.} \label{figerr2}
\end{center}
\end{figure}

We show the results in Figure \ref{figerr2} for $T=0.2$ and $\Delta t=2.5\Delta x$. We observe in the case $A=0$ a  linear decay of the $E^{\Delta}_\infty$ error, in particular, the $E^{\Delta}_\infty$ errors fit with a linear regression curve of ratio $K_1=4.5$. We also observe the same convergence order in the case $A=-0.2$, even though the linear convergence has a smaller ratio around $K_1=2$. We underline, by Theorem \ref{teo:bounds1}, that we aspect a convergence of order $1$ \emph{independently by the choice of $\Dt$}. This appears confirmed by the test. 

\paragraph{{\bf Test 2}} We consider still a simple junction network composed  by  three edges connecting the nodes $(0,1)$, $(-1,-1)$, $(1,-1)$ with the junction point placed in $(0,0)$. 
\begin{figure}[t!]
\begin{center}
\includegraphics[height=5cm]{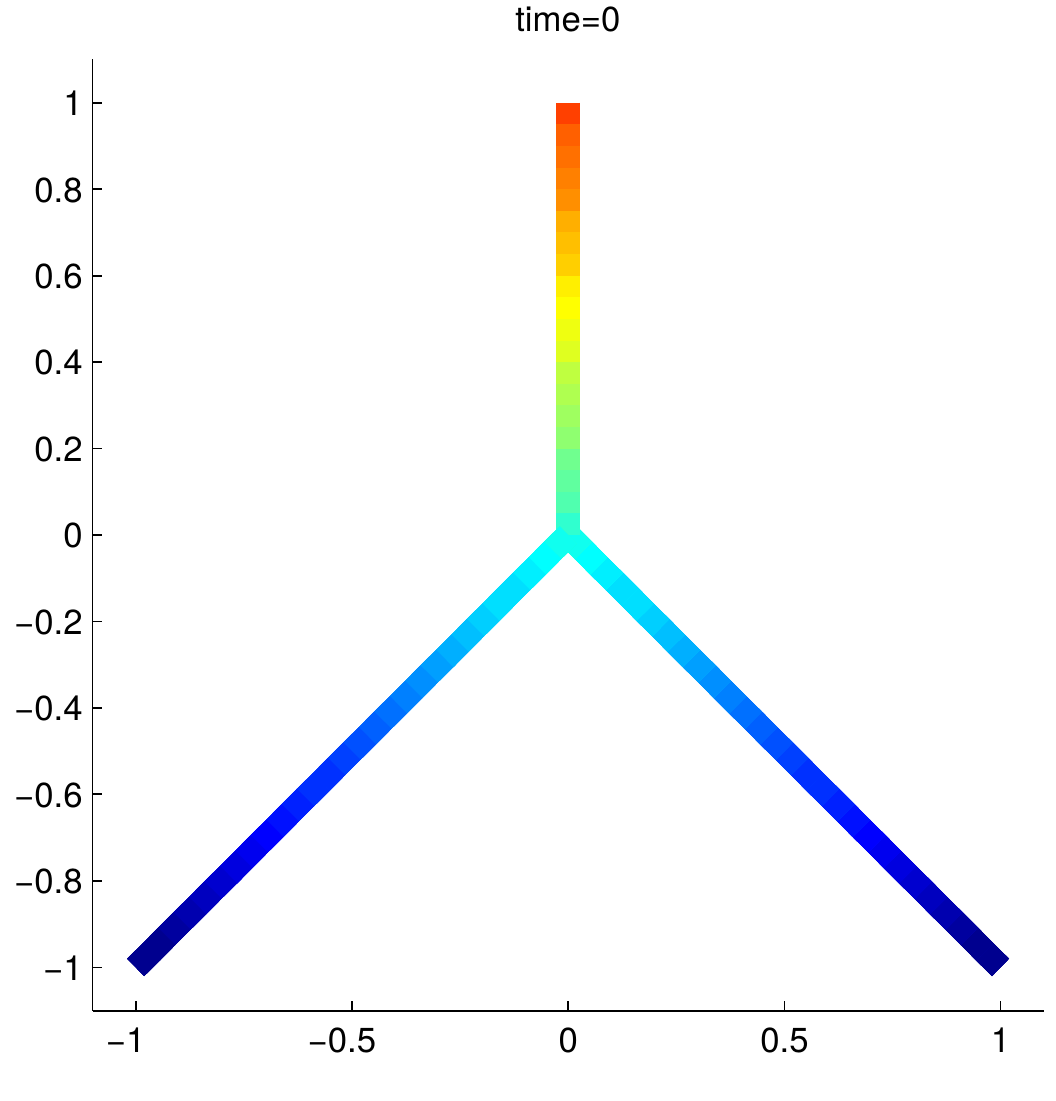}
\includegraphics[height=5cm]{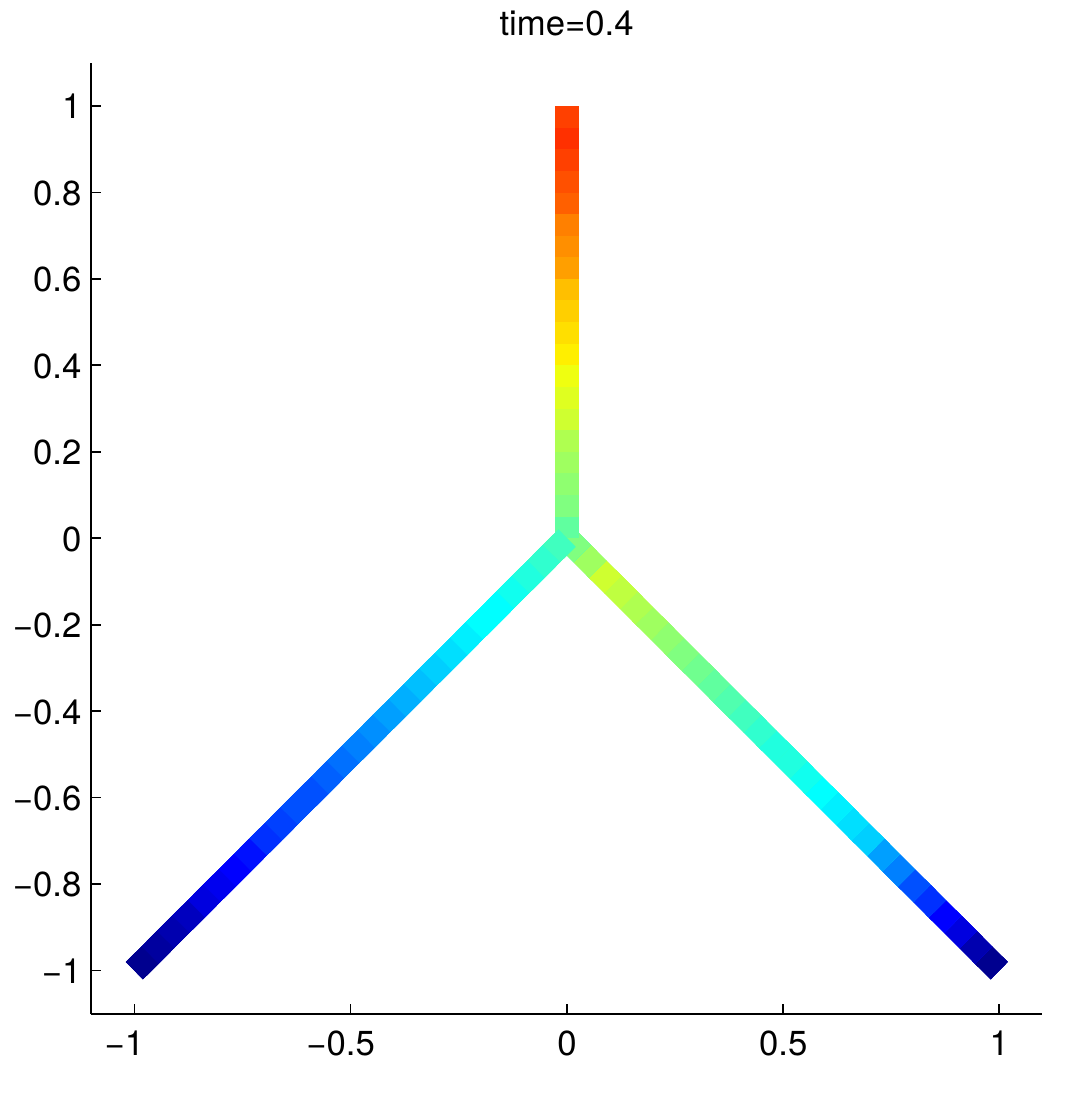}
\includegraphics[height=5cm]{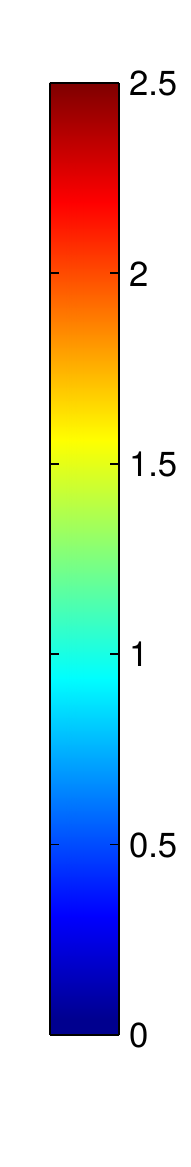}\\
\includegraphics[height=5.2cm]{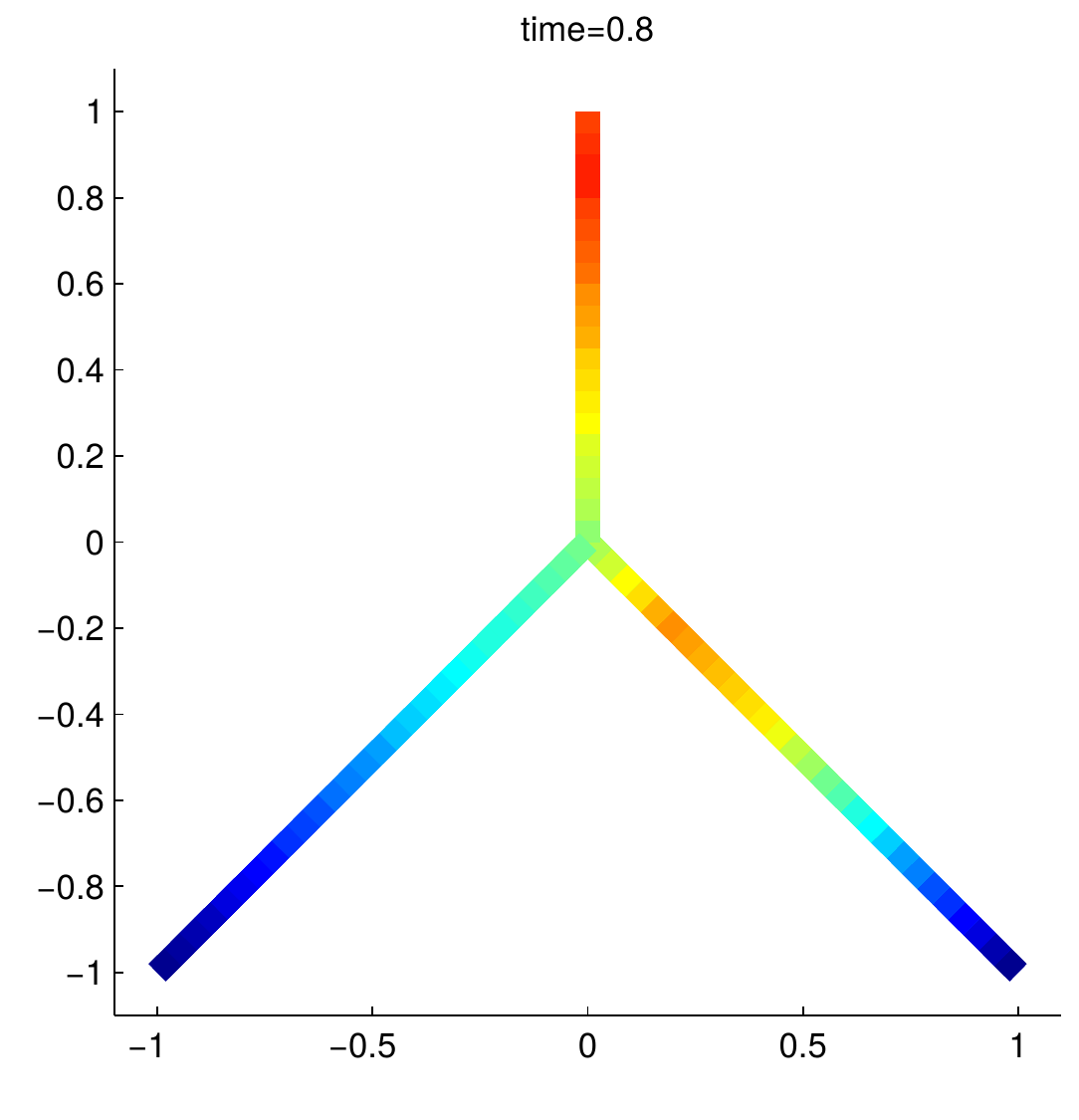}
\includegraphics[height=5.2cm]{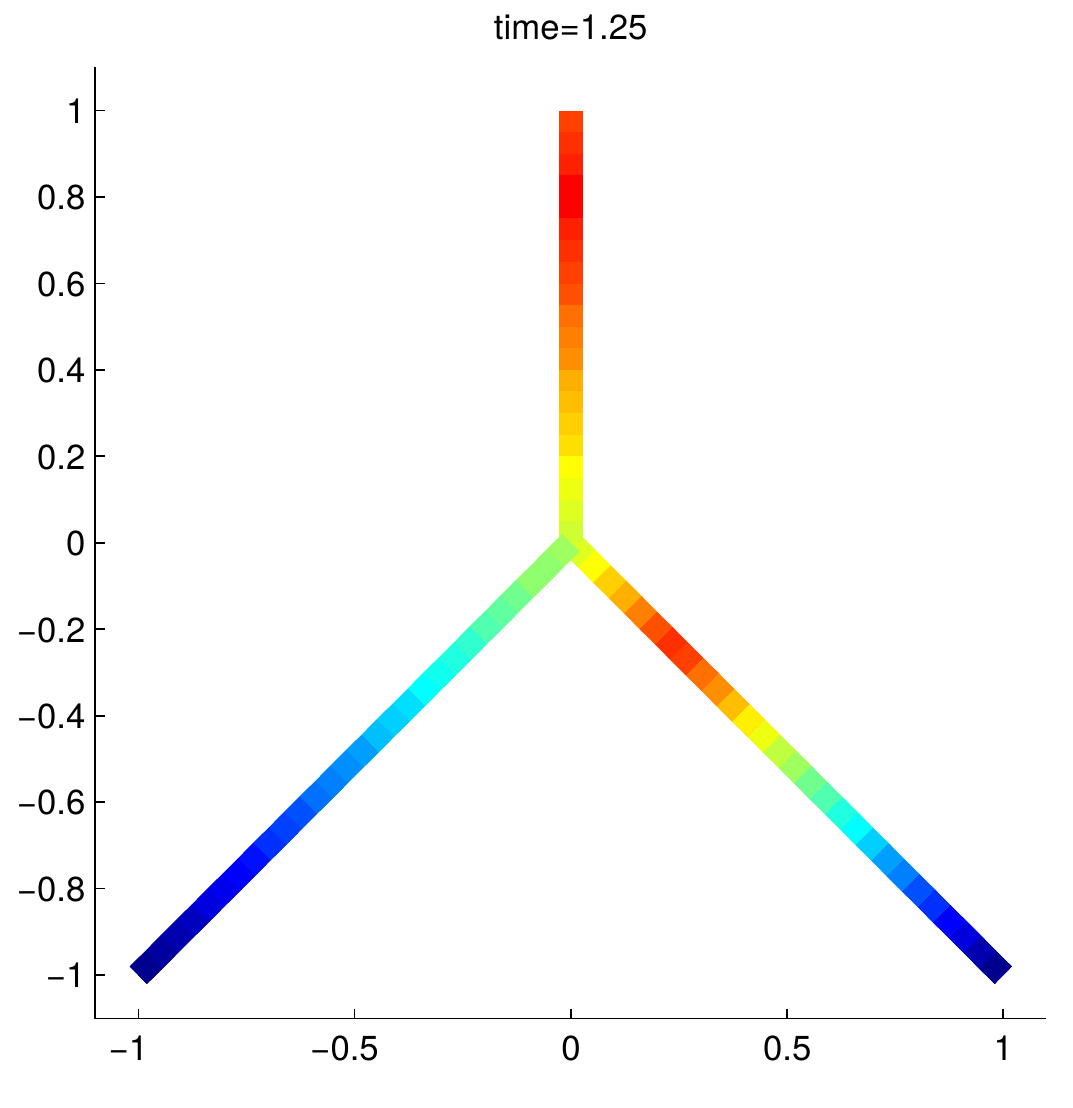}
\includegraphics[height=5.2cm]{colorbar.pdf}
\caption{Projection on the state coordinate plane of the  Initial Condition  (top left), numerical solution at time $t=0.4$ (top right), $t=0.8$ (bottom left) and $t=1.25$ (bottom right).} \label{fig1}
\end{center}
\end{figure}
We denote by $J_1$  the edge connecting $(0,1)$ to $(0,0)$ and by $J_2$, $J_3$ the edges connecting $(0,0)$ to $(1,-1)$ and $(-1,-1)$, respectively.
The cost function $L_i, i=0,
1,2,3$ are defined as follows
$$L_i(\alpha_i)=\left\{
\begin{array}{ll}
\frac{\alpha_i^2}{2}+1 &\hbox{  if }i=1,3,\\
\frac{\alpha_i^2}{2}+2 &\hbox{  if }i=2,0.
\end{array} \right.
$$
We impose  Dirichlet boundary conditions on the boundary nodes:
$$u(t,x)=\left\{
\begin{array}{ll}
0 &\hbox{  if }x=\{(-1,-1),(1,-1)\}\\
\sqrt{2}+1 &\hbox{  if }x=\{(0,1)\}.
\end{array} \right.
$$
The initial value $u_0$ is chosen as  the restriction of $1+x_2$ on $J$, where we denote $(x_1, x_2)=x$. In Figure \ref{fig1},  we show the color map of the  initial condition and of  the  numerical solution  at time $t=0.4,0.8,1.25$, projected on the state coordinate plane.  We can observe that the initial datum $u_0$ (Fig. \ref{fig1} left/top) quickly evolves to the stationary solution (Fig. \ref{fig1} right/bottom), which represents  a weighted distance from the boundary points, with exit costs equal to the Dirichlet boundary conditions.

 We compare the approximate solution at $T=2$ with the exact solution of the corresponding stationary problem; this makes sense since the approximate solution has already reached the steady state at time $T=2$.
 The exact steady state solution is
\begin{equation}\label{eqex}
u(x)=\left\{
\begin{array}{ll}
\sqrt{2}+x_2, &\hbox{  if }x \in J_1,\\
\min\left(2\sqrt{(x_1-1)^2+(x_2+1)^2},\sqrt{2}+2\sqrt{x_1^2+x_2^2}\right), &\hbox{  if }x\in J_2,\\
\sqrt{(x_1+1)^2+(x_2+1)^2}, &\hbox{  if }x\in J_3.
\end{array} \right.
\end{equation}
 In Figure \ref{fig2}, we show the behavior of the error \eqref{err} for various values of $\Delta$, fixing  the ratio between the spatial and the time step as $\Delta t=2.5\Delta x$. We underline that this is possible thanks  to the  stability property  of SL methods for large time steps (i.e. the classical hyperbolic CFL condition \cite{falcone2014semi} may not be  verified). We observe as in the first test a  linear decay of the $E^{\Delta}_\infty$ error. 

\medskip
\begin{figure}[t!]
\begin{center}
\includegraphics[height=6cm]{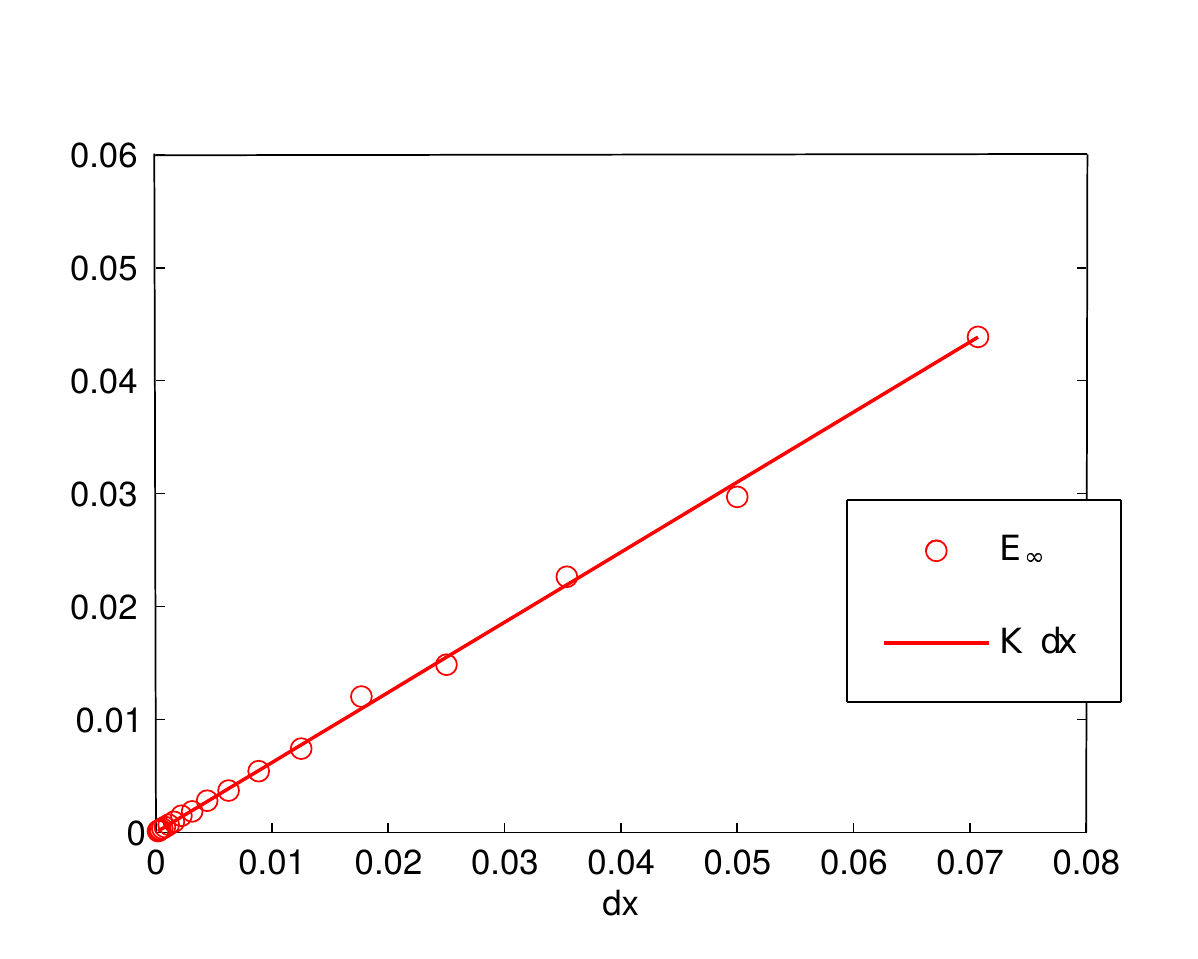} 
\caption{Graphic of $E_\infty^{\Delta}$($\circ$) with respect the space step, together with the line $K \Delta x$ with   $K=6.5$.} \label{fig2}
\end{center}
\end{figure}

\medskip
\paragraph{{\bf Test 3}} We conclude this section with a more realistic test where multiple edges are composing a complex traffic network. We consider the main network of the city of Rouen (Figure \ref{figR}, above)  and after some simplifications, we arrive at the network represented in Figure \ref{figR}, below. Here the edges in continuous blue line are large capacity roads, and in dashed red are smaller roads. The network is contained in the planar set $[0,1200]\times[0,2100]\subset \R^2$, that corresponds to the pixels of the reference map from which we extracted the network. For practical purposes, we scale it in the domain $[0,1]^2$.

\begin{figure}[ht!]
\begin{center}
\includegraphics[height=5.6cm]{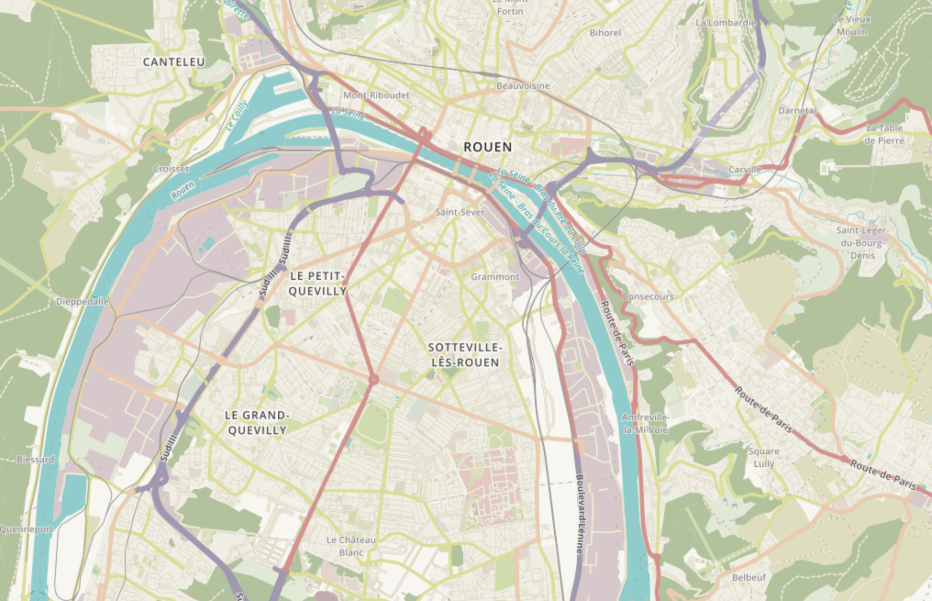}\\ 
\vspace{0.4cm}
\includegraphics[height=6cm]{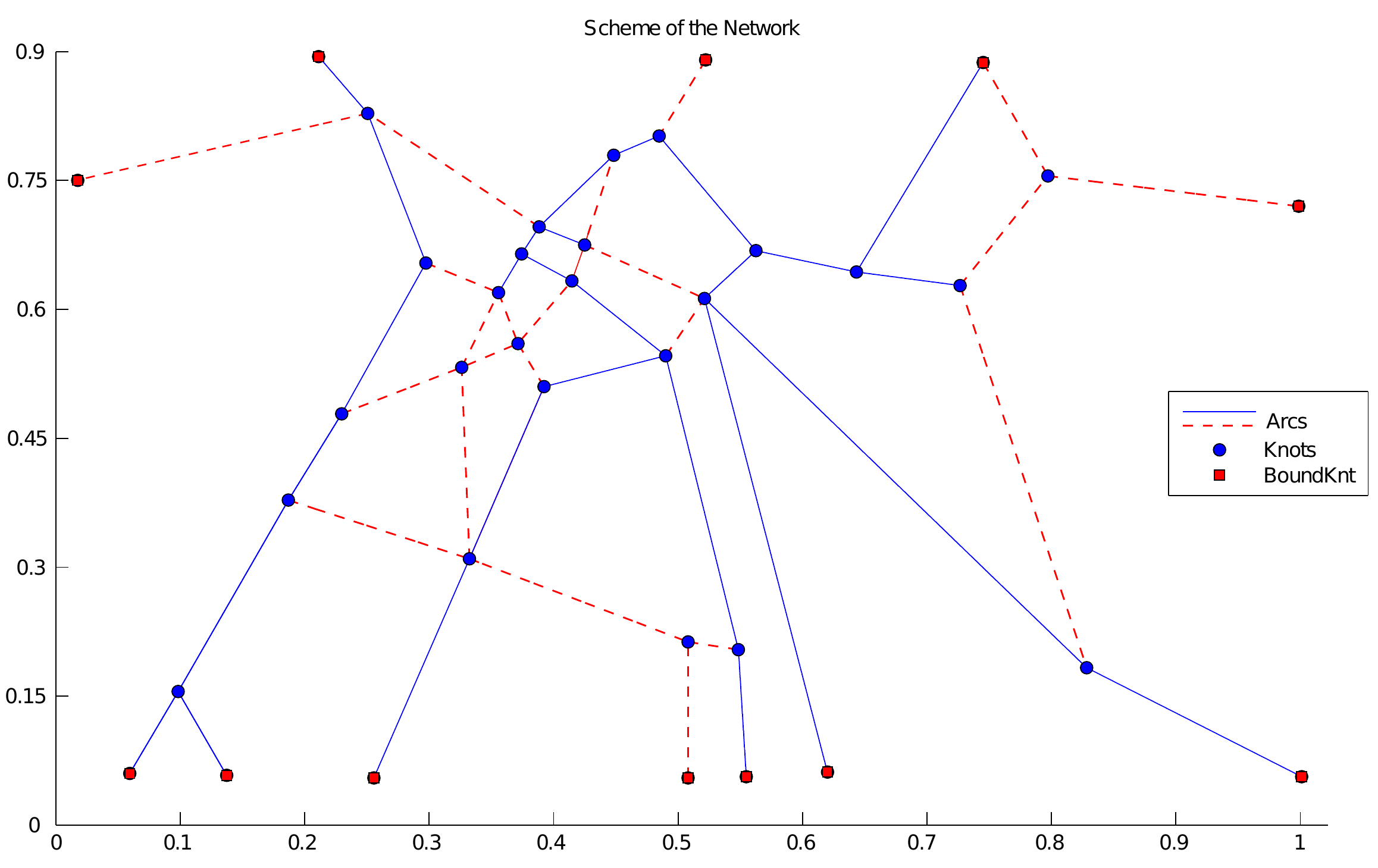} 
\caption{Map of the city of Rouen and simplified network modeling the structure of the road networks. In blue/solid line the bigger roads, in red/dotted line the smaller roads. The map $\copyright$ OpenStreetMap contributors. } \label{figR}
\end{center}
\end{figure}
Using the traffic flow interpretation of an HJ equation, as sketched in Section \ref{Sect:traffic}, we choose the initial datum $ u(x,0)=v(x)$ where $v(x)$ is the solution of the following stationary HJ equation
$$ \left\{\begin{array}{ll}
|v_x(x)|=0.7-\frac{(x-(0.5,0.5))^2}{2}, \quad & x\in J\\
v(x)=0, \quad & x\in \B.
\end{array}\right.$$
This case can be viewed as a special case of the stationary state of \eqref{eq:hjnet}, where the analysis is simpler since the Hamiltonian is continuous (cf. \cite{camilli2013approximation} for a detailed presentation). This choice of the initial datum models higher concentration of vehicles in correspondence of the city center (center of the domain). 

We are interested in the evolution of the \emph{density} of the vehicles $\rho$. We could derive it using $\eqref{cum}$. This procedure may be nontrivial (cf. \cite{costeseque2015convergent,imbert2013hamilton}) on the junctions and it is still a point that deserves further investigation. Instead, we adopt a numerical heuristic procedure using the relation $\rho(x,t)=-u_x(x,t)$ along every edge and defining 
$$\rho(x,t)=-\sum_i \min(\partial_i u(x,t),0), \quad \hbox{if } x\in \V$$ where the spatial derivatives are approximated thorough standard finite differences.  The numerical test that we present confirms that this procedure provides reasonable results.

We want to study the network in a case of an evacuation. We impose null Dirichlet boundary conditions on the exits of the network in correspondence of the red squares of Figure \ref{figR} (below).
We adopt the simple Hamiltonian
$$H_i(p)=\left\{
\begin{array}{ll}
\frac{1}{\lambda_i}p^2-p &\hbox{  if $p\geq 0$ },\\
\frac{1}{\lambda_i}p^2+p &\hbox{  if $p<0$,}
\end{array} \right.
$$
where $\lambda_i$ is the capacity of the arc $i$ namely $\lambda_i=4/5$ in the red edges and $\lambda_i=1$ elsewhere.


\begin{figure}[t!]
\begin{center}
\includegraphics[width=5.8cm,height=4cm]{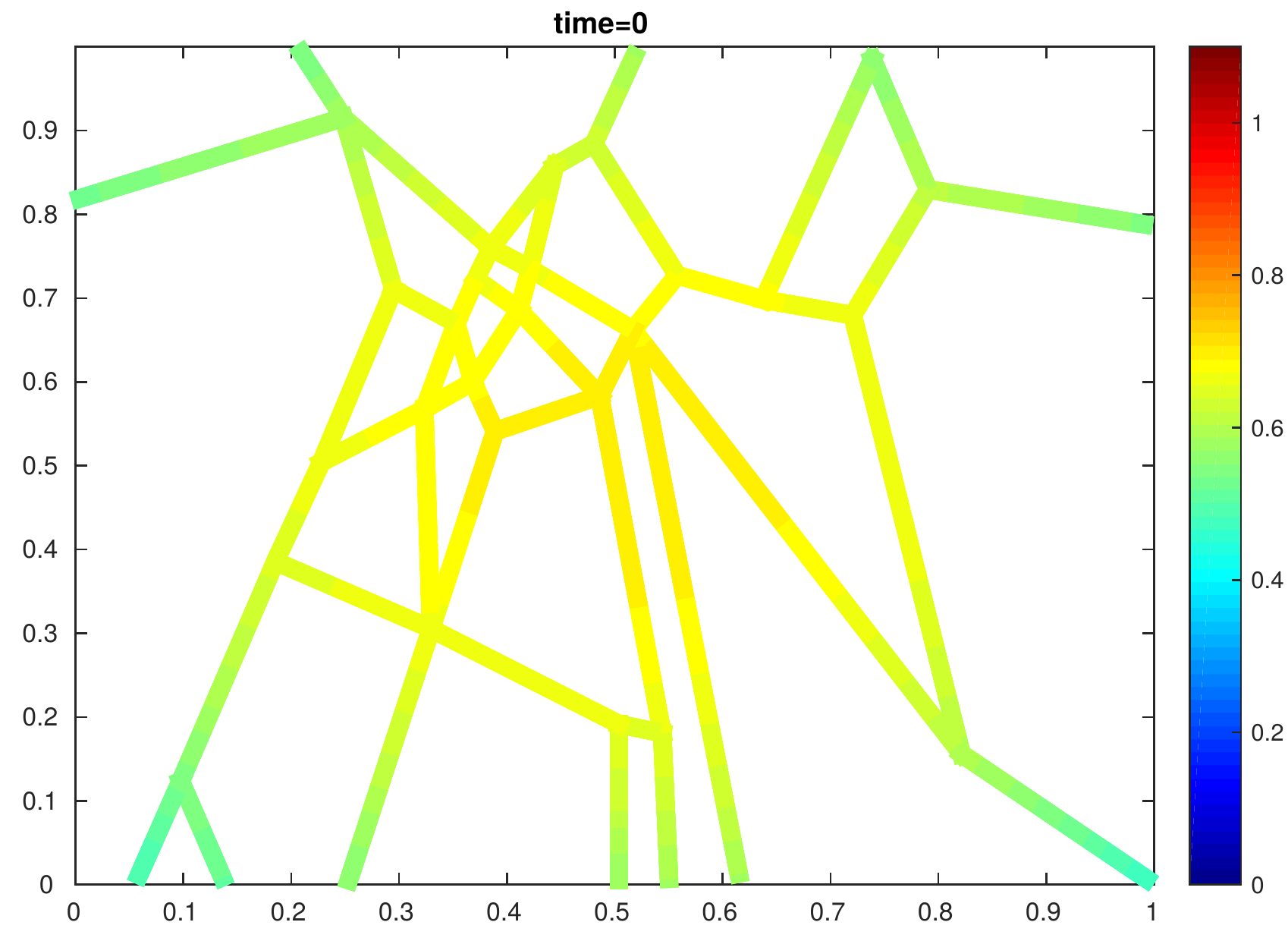}
\includegraphics[width=5.8cm,height=4cm]{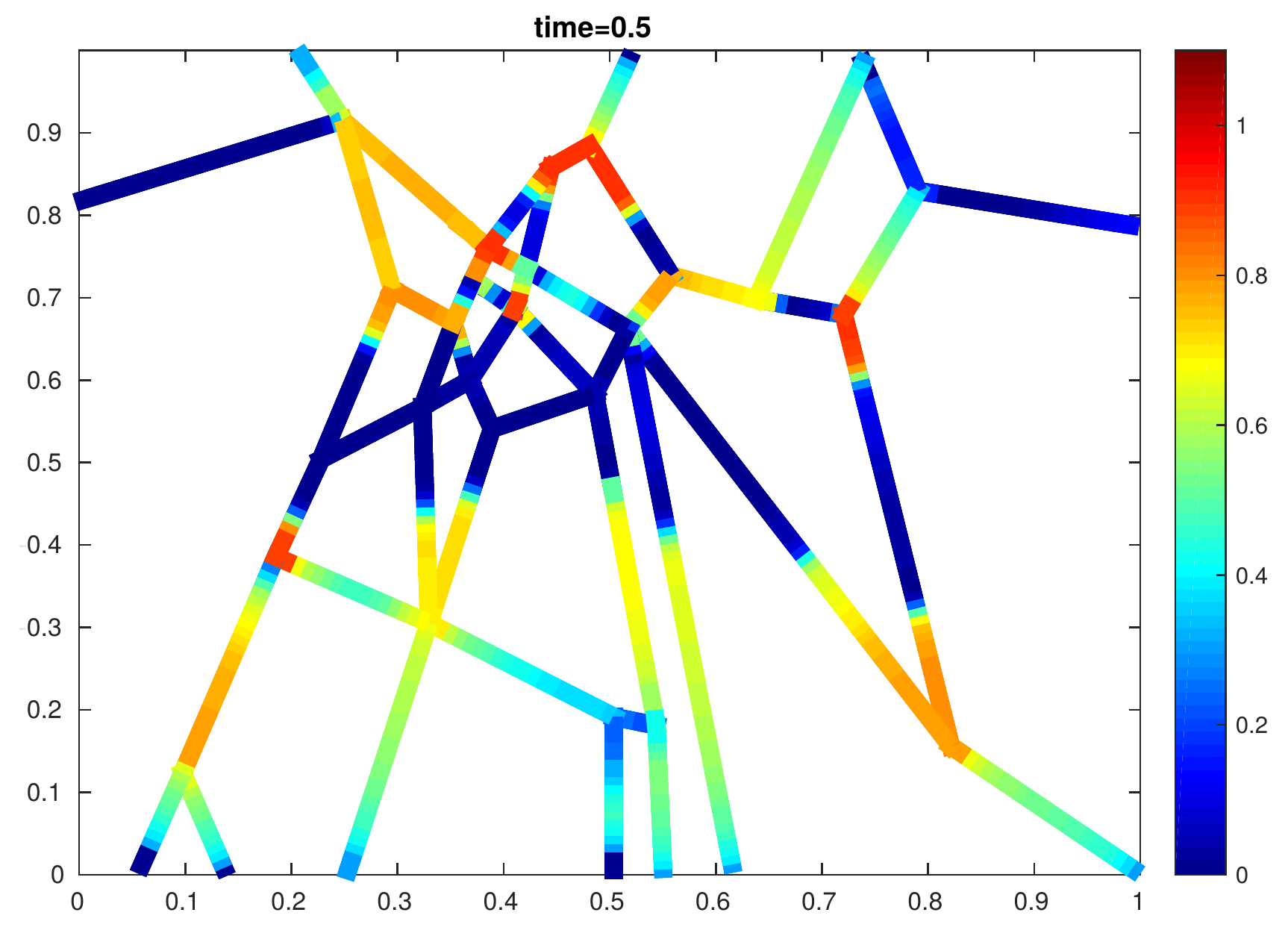} 
\includegraphics[width=5.8cm,height=4cm]{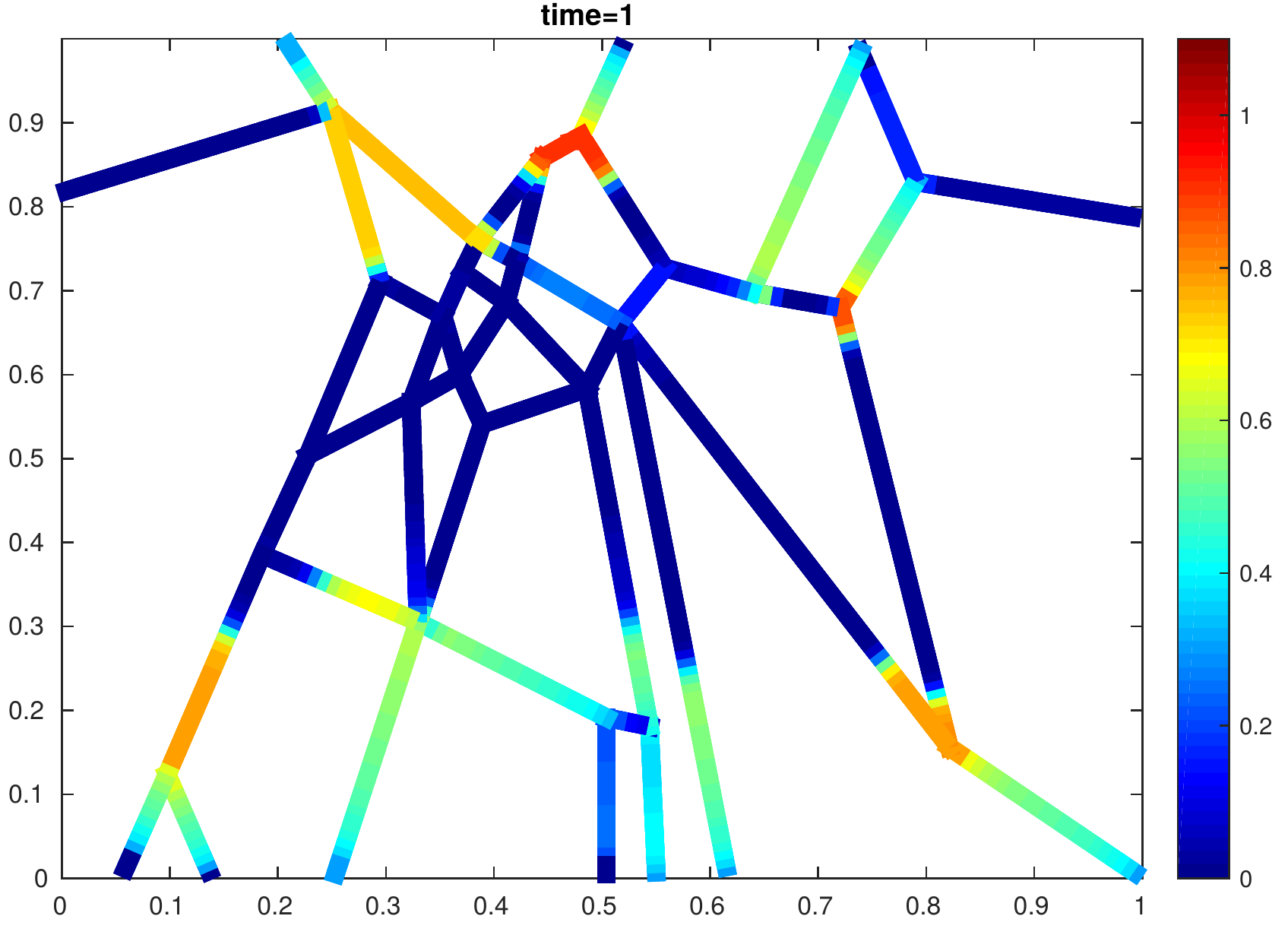} 
\includegraphics[width=5.8cm,height=4cm]{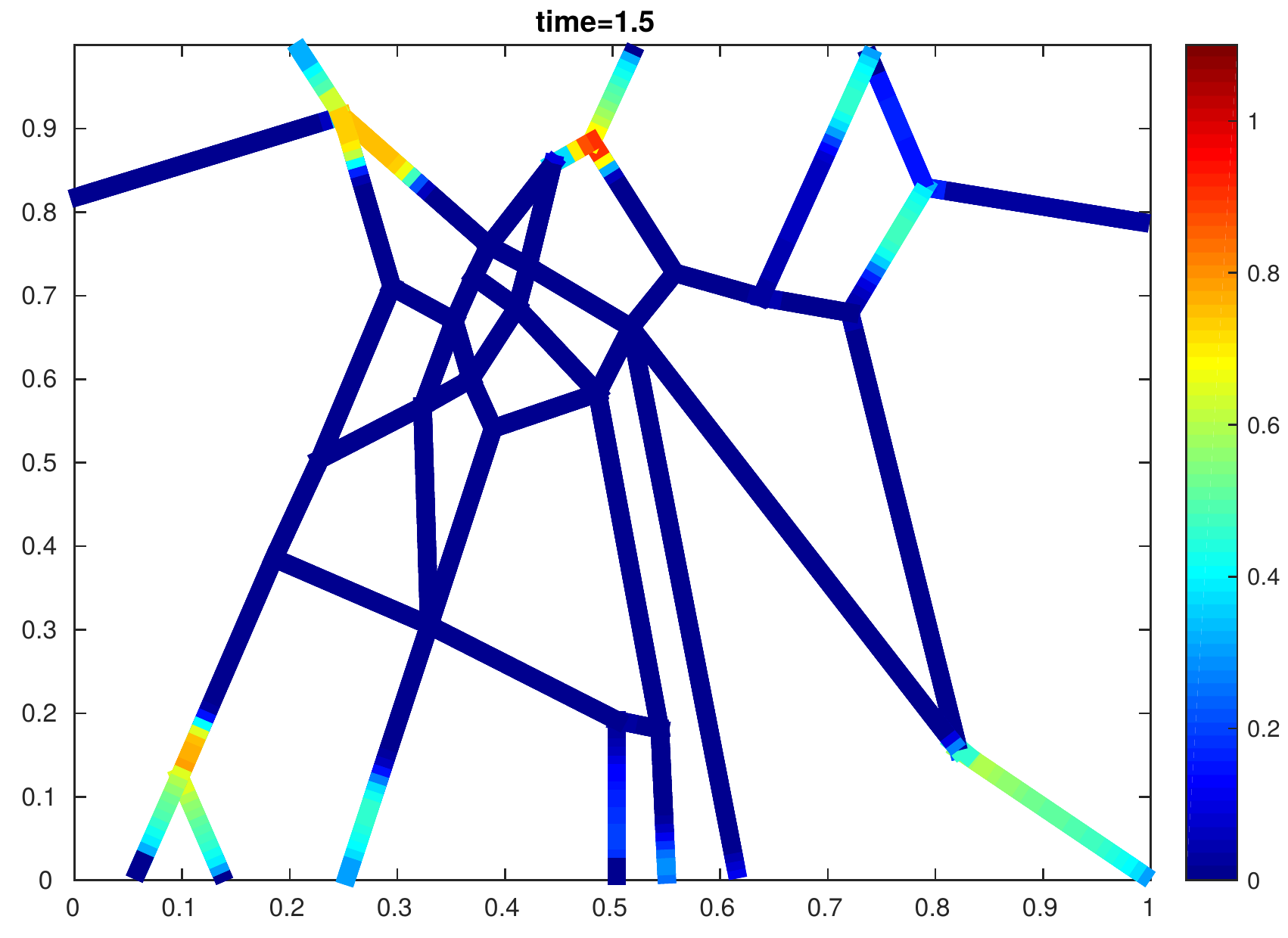} 
\caption{Evolution of the approximation of the density of vehicles at time $t=0,0.5, 1, 1.5$.} \label{figR2}
\end{center}
\end{figure} 
 
We observe that it is possible to obtain these Hamiltonians following \cite{imbert2013hamilton} for the classic LWR model (cf. \cite{garavello2016models}) with flux $(1-\rho)\rho$ on the blue arcs and $(1-5/4 \rho)\rho$ for the choice of $A=-0.4$. The red edges have then a reduced capacity compared to the blue ones.

We uniformly  approximate  the arcs using the discretization step $\Dx=0.01$ and we sample the time with $\Dt=0.05$. Due to the stability properties of the SL scheme no  CFL condition is needed, and we can adopt large time steps, fundamental to approximate the behavior of the system for long time scenarios.

In Figure \ref{figR2}, we can see the initial distribution of the density and its evolution in various moments. We observe the following. First of all the density, starting from a smooth configuration (the initial data is for construction $\rho(t,x)=0.7-0.5(x-(0.5,0.5))^2$) rapidly concentrates reaching some areas of maximal density. Those congested areas typically appear before a junction point. It is an intrinsic characteristic already observed in traffic flows literature. This phenomenon can only become more evident in the case of merging bifurcations where the outgoing roads have a reduced capacity. This is the case of the junction in the proximity of the point $(0.5, 0.82)$ where some congested areas are formed and take more time to disappear.

\section{Acknowledgments}
{The first author was supported by the Indam GNCS project ``Metodi numerici per equazioni iperboliche e cinetiche e applicazioni'' and PGMO project VarPDEMFG.
The second and third authors were partially supported by  the European Union with the European regional development fund (ERDF, HN0002137) and by the Normandie Regional Council (via the M2NUM project).}

\bibliographystyle{amsplain}
\bibliography{hjnetworks}

\providecommand{\bysame}{\leavevmode\hbox to3em{\hrulefill}\thinspace}
\providecommand{\MR}{\relax\ifhmode\unskip\space\fi MR }
\providecommand{\MRhref}[2]{%
  \href{http://www.ams.org/mathscinet-getitem?mr=#1}{#2}
}
\providecommand{\href}[2]{#2}
\begin{thebibliography}{10}

\bibitem{achdou2013hamilton}
Yves Achdou, Fabio Camilli, Alessandra Cutr{\`\i}, and Nicoletta Tchou,
  \emph{Hamilton--jacobi equations constrained on networks}, Nonlinear
  Differential Equations and Applications NoDEA \textbf{20} (2013), no.~3,
  413--445.

\bibitem{andreianov2011theory}
Boris Andreianov, Kenneth~Hvistendahl Karlsen, and Nils~Henrik Risebro, \emph{A
  theory of l1-dissipative solvers for scalar conservation laws with
  discontinuous flux}, Archive for rational mechanics and analysis \textbf{201}
  (2011), no.~1, 27--86.

\bibitem{BardiCapuz@incollection}
Martino Bardi and Italo~Capuzzo Dolcetta, \emph{{Optimal control and viscosity
  solutions of {H}amilton-{J}acobi-{B}ellman equations}}, Birkauser, 1996.

\bibitem{barles2013bellman}
Guy Barles, Ariela Briani, and Emmanuel Chasseigne, \emph{A bellman approach
  for two-domains optimal control problems in $r^n$}, ESAIM: Control,
  Optimisation and Calculus of Variations \textbf{19} (2013), no.~3, 710--739.

\bibitem{barles2016flux}
Guy Barles, Ariela Briani, Emmanuel Chasseigne, and Cyril Imbert,
  \emph{Flux-limited and classical viscosity solutions for regional control
  problems}, arXiv preprint arXiv:1611.01977 (2016).

\bibitem{BS91}
Guy Barles and Panagiotis~E. Souganidis, \emph{Convergence of approximation
  schemes for fully nonlinear second order equations}, Asymptotic Anal.
  \textbf{4} (1991), no.~3, 271--283.

\bibitem{camilli2018flame}
Fabio Camilli, Elisabetta Carlini, and Claudio Marchi, \emph{A flame
  propagation model on a network with application to a blocking problem}, to
  appear on DCDS-S October 2018 issue \textbf{11} (2018), no.~5.

\bibitem{camilli2013approximation}
Fabio Camilli, Adriano Festa, and Dirk Schieborn, \emph{An approximation scheme
  for a hamilton--jacobi equation defined on a network}, Applied Numerical
  Mathematics \textbf{73} (2013), 33--47.

\bibitem{camilli2016discrete}
Fabio Camilli, Adriano Festa, and Silvia Tozza, \emph{A discrete hughes' model
  for pedestrian flow on graphs}, Networks \& Heterogeneous Media \textbf{12}
  (2017), no.~1, 93--112.

\bibitem{costeseque2015convergent}
Guillaume Costeseque, Jean-Patrick Lebacque, and R{\'e}gis Monneau, \emph{A
  convergent scheme for hamilton--jacobi equations on a junction: application
  to traffic}, Numerische Mathematik \textbf{129} (2015), no.~3, 405--447.

\bibitem{cristiani2011multiscale}
Emiliano Cristiani, Benedetto Piccoli, and Andrea Tosin, \emph{Multiscale
  modeling of granular flows with application to crowd dynamics}, Multiscale
  Modeling \& Simulation \textbf{9} (2011), no.~1, 155--182.

\bibitem{falcone2014semi}
Maurizio Falcone and Roberto Ferretti, \emph{Semi-lagrangian approximation
  schemes for linear and hamilton-jacobi equations}, vol. 133, SIAM, 2014.

\bibitem{forcadel2015homogenization}
Nicolas Forcadel and Wilfredo Salazar, \emph{Homogenization of second order
  discrete model and application to traffic flow}, Differential and Integral
  Equations \textbf{28} (2015), no.~11/12, 1039--1068.

\bibitem{garavello2016models}
Mauro Garavello, Ke~Han, and Benedetto Piccoli, \emph{Models for vehicular
  traffic on networks}, vol.~9, American Institute of Mathematical Sciences
  (AIMS), Springfield, MO, 2016.

\bibitem{garavello2007conservation}
Mauro Garavello, Roberto Natalini, Benedetto Piccoli, and Andrea Terracina,
  \emph{Conservation laws with discontinuous flux}, Networks \& Heterogeneous
  Media \textbf{2} (2007), no.~1, 159--179.

\bibitem{garavello2006traffic}
Mauro Garavello and Benedetto Piccoli, \emph{Traffic flow on networks}, vol.~1,
  American institute of mathematical sciences Springfield, 2006.

\bibitem{imbert2015error}
Jessica Guerand and Marwa Koumaiha, \emph{Error estimates for finite difference
  schemes associated with hamilton-jacobi equations on a junction}, arXiv
  preprint arXiv:1502.07158 (2017).

\bibitem{imbert2013flux}
Cyril Imbert and R{\'e}gis Monneau, \emph{Flux-limited solutions for
  quasi-convex hamilton-jacobi equations on networks}, Annales Scientifiques de
  l'\'ENS (2017), no.~50, 357--448.

\bibitem{imbert2013hamilton}
Cyril Imbert, R{\'e}gis Monneau, and Hasnaa Zidani, \emph{A hamilton-jacobi
  approach to junction problems and application to traffic flows}, ESAIM:
  Control, Optimisation and Calculus of Variations \textbf{19} (2013), no.~1,
  129--166.

\bibitem{LionsSouganidis}
Pierre-Louis Lions and Panagiotis Souganidis, \emph{Well-posedness for
  multi-dimensional junction problems with {K}irchoff-type conditions}, Atti
  Accad. Naz. Lincei Rend. Lincei Mat. Appl. \textbf{28} (2017), no.~4,
  807--816. \MR{3729588}

\bibitem{newell}
Gordon~F. Newell, \emph{A simplified theory of kinematic waves in highway
  traffic, part i: General theory}, Transportation Research Part B:
  Methodological \textbf{27} (1993), no.~4, 281--287.

\bibitem{QSS07}
Alfio Quarteroni, Riccardo Sacco, and Fausto Saleri, \emph{Numerical
  mathematics}, Texts in Applied Mathematics 37, Springer-Verlag Berlin
  Heidelberg, 2007.

\bibitem{schieborn2013viscosity}
Dirk Schieborn and Fabio Camilli, \emph{Viscosity solutions of eikonal
  equations on topological networks}, Calculus of Variations and Partial
  Differential Equations (2013), 1--16.

\bibitem{treiber2013traffic}
Martin Treiber and Arne Kesting, \emph{Traffic flow dynamics}, Traffic Flow
  Dynamics: Data, Models and Simulation, Springer-Verlag Berlin Heidelberg
  (2013).

\end{thebibliography}

\end{document}